\renewcommand{\NAT@separator}{\NAT@sep\nolinebreak}
\newcommand{\bibnote}[2]{\nocite{#1}\@namedef{#1chairxnote}{#2}}
\renewcommand{\tilde}{\widetilde}
\newcommand{\C}{\field{C}}
\newcommand{\R}{\field{R}}
\newcommand{\N}{\field{N}}
\newcommand{\Z}{\field{Z}}
\renewcommand{\epsilon}{\varepsilon}
\DeclareFontFamily{U}{mathx}{}
\DeclareFontShape{U}{mathx}{m}{n}{ <-> mathx10 }{}
\DeclareSymbolFont{mathx}{U}{mathx}{m}{n}
\DeclareMathAccent{\widecheck}{0}{mathx}{"71}
\renewcommand{\Holomorphic}{\mathscr{H}}
\newcommand{\GL}{\operatorname{GL}}
\newcommand{\liegroup}[1]{\operatorname{#1}}
\newcommand{\Taylor}{\operatorname{T}}
\newcommand{\Majorant}{\operatorname{F}}
\newcommand{\LieAlg}{\operatorname{Lie}}
\newcommand{\Entire}{\mathscr{E}}
\newcommand{\Covering}[1][G]{\widetilde{#1}} 
\author{\sc Michael Heins\thanks{This paper has been supported from Italian
Ministerial grant PRIN 2022 ``Cluster algebras and Poisson Lie groups'', n.
20223FEA2E - CUP D53C24003330006}}
\affil{Università degli Studi di Salerno}
\title{\sc Entire Functions on Lie Groups}
\date{December 2025}
\begin{document}

\maketitle

\begin{abstract}
    Every Lie group $G$ carries a distinguished algebra of particularly well-behaved
    real-analytic mappings: The entire functions $\Entire(G)$. They were introduced
    for the purposes of strict deformation quantization. This paper establishes a
    one-to-one correspondence between entire functions and holomorphic mappings
    $\Holomorphic(G_\C)$ on the universal complexification $G_\C$ of $G$ as Fréchet
    algebras. Methodically, this is achieved by porting aspects of classical complex
    analysis into a left-invariant guise and by studying the geometry of $G_\C$. As a
    byproduct, we obtain a strict deformation quantization of the holomorphic
    cotangent bundle of any universal complexification.
\end{abstract}

\vspace{1cm}

\tableofcontents

\vspace{1.5cm}

\textbf{Acknowledgments:} The author would like to thank Chiara Esposito, Stefan
Waldmann and Oliver Roth for numerous valuable discussions and remarks, which
have improved the presentation and clarity of this article significantly.

\newpage

\section{Lie-Taylor Formulas}
\label{sec:LieTaylor}%
This article is partially based on and expands upon the results obtained within the
PhD thesis \cite[Ch.~2]{heins:2024a}. Moreover, it constitutes a follow-up to
\cite{heins.roth.waldmann:2023a}, completely clarifying the nature of its central
definition. That being said, our methods are elementary throughout and the
presentation is entirely self-contained. That is to say, this article supercedes and
incorporates all the relevant parts of the prior material. We begin by setting the
stage in terms of definitions and precise formulations of our main results.

Throughout the text we deal with a, typically real, Lie group $G$ of dimension
\begin{equation}
    d
    \in
    \N_0
    \coloneqq
    \{0\}
    \cup
    \N
    \coloneqq
    \{0\}
    \cup
    \{1,2,\ldots\}
\end{equation}
with Lie algebra $\liealg{g}$ and unit $\E$. Sometimes, the functorial notation
$\LieAlg(G)$ for the Lie algebra is more convenient, and we use both
interchangeably throughout. Whenever complex Lie groups occur, we shall
forget about the complex structure to apply the real constructions regardless. If $X$
is a metric or normed space, we denote the corresponding open balls of radius~$r
\ge 0$ centered at~$x_0 \in X$ by $\Ball_r(x_0)$ and their closures by
$\Ball_r(x_0)^\cl$. Finally, a domain within a topological space is an open, non-empty
and connected subset.

In \cite{heins.roth.waldmann:2023a}, the algebra $\Entire(G)$ of entire functions on
any Lie group $G$ was introduced to facilitate a strict deformation quantization of
the cotangent bundle $T^*G$. Entire functions constitute real-analytic functions,
whose Lie-Taylor series is particularly well-behaved. However, concrete examples
have been sparse. Indeed, the only provided construction consists in the
representative functions or matrix elements by
\cite[Theorem~4.23]{heins.roth.waldmann:2023a}. These arise from finite dimensional
representations
\begin{equation}
    \pi
    \colon
    G
    \longrightarrow
    \GL(V)
\end{equation}
of $G$ by choosing a vector $v \in V$ and a linear functional $\varphi \in V'$ as
\begin{equation}
    \label{eq:RepresentativeFunction}
    \pi_{v,\varphi}
    \colon
    G
    \longrightarrow
    \C, \quad
    \pi_{v,\varphi}(g)
    \coloneqq
    \varphi
    \bigl(
        \pi(g)v
    \bigr).
\end{equation}
In particular, it is not clear whether the algebra of entire functions separates points
in general. As we shall see in Example~\ref{ex:AutomaticPeriodicity}, this is indeed
not always the case. It is the purpose of this article to describe~$\Entire(G)$ as
pullbacks of holomorphic functions on Hochschild's universal complexification
$G_\C$ of $G$, which he conceived within \cite{hochschild:1966a}. This
complexification consists of a pair~$(G_\C, \eta)$ of a complex Lie group $G_\C$ and
a Lie group morphism $\eta \colon G \longrightarrow G_\C$ fulfilling the following
universal property:

{
    \setlength{\parindent}{15pt}
    For every Lie group morphism $\Phi \colon G \longrightarrow H$ into a complex
    Lie group $H$, there exists a unique holomorphic Lie group morphism $\Phi_\C
    \colon G_\C \longrightarrow H$ such that $\Phi_\C \circ \eta = \Phi$.
}

We suppress the subscript and write $\eta$ instead when only one complexification
is involved. Moreover, the letter $\eta$ always encodes the morphism coming from a
universal complexification. In terms of a commutative diagram, the universal
complexification looks like this:
\begin{equation}
    \label{eq:UniversalComplexification}
    \begin{tikzcd}[column sep = huge, row sep = huge]
        G
        \arrow[d, "\eta"]
        \arrow[r, "\Phi"]
        &H \\
        G_\C
        \ar[swap,ru, dashed, "\exists_1 \; \Phi_\C"]
    \end{tikzcd}
\end{equation}
If $G_\C$ exists, then it is unique up to unique biholomorphic group morphism by
the usual arguments. Moreover, one may regard $\argument_\C$ as a functor from
the category of real Lie groups into the category of complex Lie groups. As such, the
diagram \eqref{eq:UniversalComplexification} consists of two different kinds of nodes
and arrows.

For our purposes, the crucial point is that pullback along~$\eta$ allows us to
compare functions on $G_\C$ with functions on $G$. Taking another look at the
definition of representative functions in \eqref{eq:RepresentativeFunction}, they
induce holomorphic mappings $\Pi_{v,\varphi}$ on~$G_\C$ satisfying
\begin{equation}
    \Pi_{v,\varphi}
    \circ
    \eta
    =
    \pi_{v,\varphi}
\end{equation}
by means of the universal property after complexifying the representation space $V$
if necessary. Theorem~\ref{thm:Extension} states that this extendability a general
phenomenon for entire functions. At the start of Section~\ref{sec:Extension}, we are
going to study the geometric fine structure of the universal complexification and
sketch a concrete construction.

Before delving into the precise definition of the algebra $\Entire(G)$ of entire
functions, it is instructive to review the classical Lie-theoretic incarnations of the
Taylor formula, as it can be found in the beginning of
\cite[Sec.~2.1.4]{helgason:2001a} for Lie groups and \cite[(1.48)]{forstneric:2011a} or
\cite[Prop.~4.5]{heins.roth.waldmann:2023a} for arbitrary analytic manifolds. In the
sequel, we write~$\Comega(G)$ for the algebra of real-analytic functions on a Lie
group~$G$. Here we use the fact that the exponential atlas endows $G$ with the
structure of an analytic manifold, see for instance
\cite[(1.6.3)~Theorem]{duistermaat.kolk:2000a}.

Recall that the Lie algebra $\liealg{g}$ acts on the space of complex-valued smooth
functions $\Cinfty(G)$ defined on $G$ by means of Lie derivatives in
direction of left-invariant vector fields. Identifying $\liealg{g} = T_\E G$ as the
tangent space at the group unit, we write~$X_\xi$ for the left-invariant vector field
with $X_\xi(\E) = \xi$ and denote its action by~$\Lie_{X_\xi}$. By the universal
property of the universal enveloping algebra $\Universal^\bullet(\liealg{g})$ of
$\liealg{g}$, this induces an algebra morphism
\begin{equation}
    \label{eq:LieDerivative}
    \Lie
    \colon
    \Universal^\bullet(\liealg{g})
    \longrightarrow
    \Diffop^\bullet
    \bigl(
        \Cinfty(G)
    \bigr)
\end{equation}
mapping into the algebra of left invariant differential operators acting on
$\Cinfty(G)$. In the sequel, we suppress the product of
$\Universal^\bullet(\liealg{g})$ within our notation.
\begin{proposition}[One variable Lie-Taylor formula
{\cite[Prop.~3.2.1]{heins:2024a}}]
    \label{prop:LieTaylorOneVariable}%
    Let $G$ be a Lie group and fix a real-analytic $\phi \in \Comega(G)$, an expansion
    point $g \in G$ and a direction $\xi \in \liealg{g}$.
    \begin{propositionlist}
        \item There exists a radius $R > 0$ such that the Lie-Taylor formula
        \begin{equation}
            \label{eq:LieTaylorOneVariable}
            \phi
            \bigl(
            g \cdot \exp(t\xi)
            \bigr)
            =
            \bigl(
            \exp(t \Lie_{X_\xi})
            \phi
            \bigr)(g)
            =
            \sum_{k=0}^{\infty}
            \frac{t^k}{k!}
            \bigl(
                \Lie_{X_\xi}^k
                \phi
            \bigr)
            (g)
        \end{equation}
        holds whenever $t \in \R$ fulfils $\abs{t} < R$.
        \item The series \eqref{eq:LieTaylorOneVariable} is convergent in the Fréchet
        space $\Holomorphic(\Ball_R(0))$ of holomorphic functions on the open disk
        $\Ball_R(0) \subseteq \C$ with respect to the complex variable~$t$.
    \end{propositionlist}
\end{proposition}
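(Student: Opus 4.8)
The plan is to reduce everything to the single-variable function $\psi(t) \coloneqq \phi\bigl(g \cdot \exp(t\xi)\bigr)$ and to identify its Taylor coefficients with iterated Lie derivatives. First I would observe that $\psi$ is real-analytic on a neighborhood of $0 \in \R$: the curve $t \mapsto g\exp(t\xi)$ is real-analytic, since $\exp$ is real-analytic with respect to the exponential atlas and left translation by $g$ is an analytic diffeomorphism, while $\phi \in \Comega(G)$ is real-analytic by hypothesis. Hence $\psi$ is a composition of real-analytic maps and therefore real-analytic. Consequently there is a radius $R > 0$ such that $\psi$ agrees on $(-R, R)$ with its convergent Taylor series and extends to a holomorphic function on the disk $\Ball_R(0) \subseteq \C$.

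The second step is to compute the derivatives of $\psi$. The curve $t \mapsto g\exp(t\xi)$ is precisely the integral curve of the left-invariant vector field $X_\xi$ through $g$, because the flow of $X_\xi$ is right multiplication by $\exp(t\xi)$ and $\exp(t\xi)\exp(s\xi) = \exp\bigl((t+s)\xi\bigr)$. Writing $h_t \coloneqq g\exp(t\xi)$ and differentiating in the direction of the one-parameter subgroup one finds $\tfrac{\D}{\D t}\psi(t) = \bigl(\Lie_{X_\xi}\phi\bigr)(h_t)$, and an induction on $k$ — applying the flow property at the base point $h_t$ at each stage — yields $\tfrac{\D^k}{\D t^k}\psi(t) = \bigl(\Lie_{X_\xi}^k\phi\bigr)(h_t)$. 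Evaluating at $t = 0$ gives $\psi^{(k)}(0) = \bigl(\Lie_{X_\xi}^k\phi\bigr)(g)$. Inserting these coefficients into the Taylor expansion of $\psi$ establishes the series identity in the first claim; the middle expression $\bigl(\exp(t \Lie_{X_\xi})\phi\bigr)(g)$ is just compact notation for this very series.

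For the second claim I would invoke the classical fact that a power series converges locally uniformly on its disk of convergence. Since the Fréchet topology on $\Holomorphic(\Ball_R(0))$ is the topology of locally uniform (equivalently, compact) convergence, the partial sums of the Lie-Taylor series converge in this topology to the holomorphic extension of $\psi$ furnished by the first step, which is exactly what is asserted.

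The main obstacle is the rigorous justification of the two analytic inputs: the real-analyticity of $\psi$ as a composition (which is where the exponential atlas and the analyticity of $\phi$ genuinely enter) and, slightly more delicately, the inductive differentiation — one must check that differentiating under the iterated left-invariant derivative is licit and that the flow property is invoked at the correct base point $h_t$ rather than at $g$ in every step. Once the identity $\psi^{(k)}(0) = \bigl(\Lie_{X_\xi}^k\phi\bigr)(g)$ is secured, the remainder is standard one-variable power-series theory, so I expect no further difficulty there.
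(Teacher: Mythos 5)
Your argument is correct, and its computational core coincides with the paper's: the identity $\psi^{(k)}(t) = \bigl(\Lie_{X_\xi}^k \phi\bigr)\bigl(g\exp(t\xi)\bigr)$ that you obtain from the flow property is exactly \eqref{eq:LieDerivativeHigherOrder}, which the paper derives instead from left-invariance of $X_\xi$ via pullbacks along $\ell_{g\exp(s\xi)}$ --- that the flow of $X_\xi$ is right translation by $\exp(t\xi)$ is the same fact in different clothing, and your inductive bookkeeping (applying the base case to $\Lie_{X_\xi}^{k-1}\phi$ at the moving base point $h_t$) is sound. Where you genuinely diverge is the source of convergence: you restrict to the line first and invoke one-variable real-analyticity of the composition $\psi(t) = \phi\bigl(g\exp(t\xi)\bigr)$, whereas the paper works in the full $d$-dimensional exponential chart $\ell_g \circ \exp$, expands $\phi$ there as a convergent multivariable Taylor series on a sup-norm polydisk $\Ball_R(0) \subseteq U$, and only then restricts to the line $t \mapsto t\xi$ after choosing a basis with $\xi = \basis{e}_1$. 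For the proposition as stated, your route is leaner and perfectly adequate. What the paper's chart-based detour buys is uniformity: the zero neighbourhood $U$, and hence the radius $R$, is independent of the direction $\xi$, which is precisely what Remark~\ref{rem:UniformChoices} extracts from the proof (positivity of $R_g = \min_{\supnorm{\chi}\le 1} R_g(\chi)$) en route to the multivariable formula of Theorem~\ref{thm:LieTaylor}; your one-variable reduction yields an $R$ that a priori depends on $\xi$, so it proves the proposition but would not by itself support that remark. Your treatment of the second claim --- locally uniform convergence of the partial sums in $\Holomorphic(\Ball_R(0))$ by standard power-series theory --- matches the paper's.
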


One should think of $g \in G$ as the expansion point of the power series, $\xi \in
\liealg{g}$ the direction and $t \in \R$ as the magnitude of the perturbation or,
alternatively, a time parameter. As some of the methods used for the proof are going
to be useful later, we provide our own.
\begin{proof}[of Proposition~\ref{prop:LieTaylorOneVariable}]
    Let $U \subseteq \liealg{g}$ be an open neighbourhood of $0$ such that
    \begin{equation}
        \exp
        \colon
        U \longrightarrow \exp(U)
    \end{equation}
    is a diffeomorphism. The idea is to consider the left-hand side of
    \eqref{eq:LieTaylorOneVariable} for $t\xi \in U$ as our function $\phi$
    precomposed with the exponential chart $\ell_g \circ \exp$ around $g$, where
    $\ell_g \colon G \longrightarrow G$ denotes the left multiplication with $g \in G$.
    By analyticity of $\phi$ and by shrinking $U$ if necessary, we may thus assume
    that $\phi \circ \ell_g \circ \exp \colon U \longrightarrow \C$ is globally given by its
    Taylor series around zero. Moreover, we may achieve $\xi = \basis{e}_1$ by
    choosing an appropriate basis~$(\basis{e}_1, \ldots, \basis{e}_d)$ of $\liealg{g}$. As
    the origin is an inner point of $U$, we find some $R > 0$ such that the open
    $\supnorm{\argument}$-ball $\Ball_{R}(0)$ is contained within $U$. Here,
    $\supnorm{\argument}$ denotes the supremum-norm defined by
    \begin{equation}
        \supnorm{\xi}
        \coloneqq
        \max_{n=1,\ldots,d}
        \abs[\big]
        {
            \basis{e}^n(\xi)
        },
    \end{equation}
    where $(\basis{e}^1, \ldots, \basis{e}^d)$ is the dual basis corresponding to
    $(\basis{e}_1, \ldots, \basis{e}_d)$. Geometrically, these balls constitute polydisks of
    polyradius $(R,\ldots,R)$. For $\norm{t\xi} = \abs{t} < R$, the classical Taylor
    formula around the origin then takes the simple form
    \begin{equation}
        \label{eq:LieTaylorProof}
        \phi
        \bigl(
        g \exp(t \xi)
        \bigr)
        =
        \sum_{k=0}^{\infty}
        \frac{t^k}{k!}
        \frac{\D^k}{\D s^k}
        \phi
        \bigl(
        g \exp(s \xi)
        \bigr)
        \at[\Big]{s=0}.
        \tag{$\ast$}
    \end{equation}
    By left-invariance, we have
    \begin{align}
        \Bigl(
        \Lie(\xi)
        \phi
        \Bigr)
        \bigl(
        g \exp(s\xi)
        \bigr)
        &=
        \bigl(
        \ell_{g \exp(s\xi)}^*
        \circ
        \Lie(\xi)
        \bigr)
        \phi
        \at[\Big]
        {\E} \\
        &=
        \bigl(
        \Lie(\xi)
        \circ
        \ell_{g \exp(s\xi)}^*
        \bigr)
        \phi
        \at[\Big]
        {\E} \\
        &=
        \frac{\D}{\D t}
        \phi
        \bigl(
        g \exp(s\xi) \exp(t\xi)
        \bigr)
        \at[\Big]{t=0} \\
        &=
        \frac{\D}{\D t}
        \phi
        \bigl(
        g \exp( (s + t) \xi)
        \bigr)
        \at[\Big]{t=0} \\
        &=
        \frac{\D}{\D s}
        \phi
        \bigl(
        g \exp(s\xi)
        \bigr)
    \end{align}
    and thus
    \begin{equation}
        \label{eq:LieDerivativeHigherOrder}
        \Lie_{X_\xi}^k
        \phi
        \at[\Big]
        {g \exp(s\xi)}
        =
        \frac{\D^k}{\D s^k}
        \phi
        \bigl(
        g \exp(s\xi)
        \bigr)
        \qquad
        \textrm{for all $k \in \N_0$.}
    \end{equation}
    Plugging this into \eqref{eq:LieTaylorProof} and once more invoking
    left-invariance, we obtain
    \begin{equation}
        \phi
        \bigl(
        g \exp(t \xi)
        \bigr)
        =
        \sum_{k=0}^{\infty}
        \frac{t^k}{k!}
        \Lie_{X_\xi}^k
        \phi
        \bigl(
        g \exp(s\xi)
        \bigr)
        \at[\Big]{s=0}
        =
        \sum_{k=0}^{\infty}
        \frac{t^k}{k!}
        \Lie_{X_\xi}^k
        \phi
        (g)
    \end{equation}
    whenever $\abs{t} < R$, which is \eqref{eq:LieTaylorOneVariable}. The remaining
    statement follows from the fact that the partial sums of
    \eqref{eq:LieTaylorOneVariable} correspond exactly to the partial sums of the
    classical Taylor series of $\phi \circ \ell_g \circ \exp$ and the latter is even
    convergent in the Fréchet space $\Holomorphic(\Ball_R(0))$ of holomorphic
    functions on the open disk with radius $R$.
\end{proof}

\begin{remark}[Uniform choices, {\cite[Cor.~4.9]{heins.roth.waldmann:2023a}}]
    \label{rem:UniformChoices}%
    We return to the setting of Proposition~\ref{prop:LieTaylorOneVariable}. By what
    we have shown, the function $R_g \colon \liealg{g} \longrightarrow (0,\infty) \cup
    \{\infty\}$,
    \begin{equation}
        R_g(\xi)
        \coloneqq
        \sup
        \bigl\{
        R > 0
        \colon
        \eqref{eq:LieTaylorOneVariable}
        \; \textrm{holds whenever} \;
        \abs{t} < R
        \bigr\}
    \end{equation}
    is well defined for every $g \in G$. Taking another look at our construction for
    fixed $\xi \in \liealg{g}$, we see that the zero neighbourhood $U$ was
    independent of $\xi$. Consequently,
    \begin{equation}
        R_g
        \coloneqq
        \min_{\supnorm{\chi} \le 1}
        R_g(\chi)
        \ge
        R_g(\xi)
        >
        0,
    \end{equation}
    where we take the supnorm with respect to some linear algebraic basis of
    $\liealg{g}$ with $\xi = \basis{e}_1$. Indeed, given $\chi \in \liealg{g}$
    with $\supnorm{\chi} \le 1$ that is not a
    multiple of $\xi$, we may change basis such that $\basis{e}_2 = \chi$. Having a
    uniform choice $R_g > 0$ reflects the fact that power series converge within open
    polydisks. We are going to incorporate this observation in our formulation of the
    multivariable Lie-Taylor formula. Note that $R_g$ depends on the choice of our
    basis and thus has no intrinsic geometric meaning. However,~$R_g > 0$ with
    respect to some basis does imply that~$R_g > 0$ for \emph{any} basis. This, in
    turn, is equivalent to $\phi$ being real-analytic at the point $g$.
\end{remark}

To simplify the bookkeeping, we fix some more notation, which was first established
within \cite[Sec.~4]{heins.roth.waldmann:2023a}. Recall that we have once and for
all fixed a basis $(\basis{e}_1, \ldots, \basis{e}_d)$ of $\liealg{g}$. We write
\begin{equation}
    \label{eq:LieDerivativeOfMulti}
    \Lie(\alpha)
    \coloneqq
    \Lie_{X_{\alpha_1}}
    \circ
    \cdots
    \circ
    \Lie_{X_{\alpha_n}}
\end{equation}
as well as
\begin{equation}
    \underline{z}^\alpha
    \coloneqq
    z^{\alpha_1}
    \cdots
    z^{\alpha_n}
    \quad \textrm{for} \quad
    \underline{z}
    =
    (z^1, \ldots, z^n)
    \in
    \C^n
\end{equation}
for any $n$-tuple $\alpha \in \{1,\ldots,d\}^n \eqqcolon \N_d^n$. This constitutes a
non-commutative version of the usual multi-index notation, which takes into
account that Lie derivatives into different directions typically do not commute. Given
a smooth function $\phi \in \Cinfty(G)$ and an expansion point $g \in G$, we then
call the formal power series
\begin{equation}
    \label{eq:LieTaylorFormal}
    \Taylor_\phi(g,\underline{z})
    \coloneqq
    \sum_{n=0}^{\infty}
    \frac{1}{n!}
    \sum_{\alpha \in \N_d^n}
    \bigl(
        \Lie(\alpha)\phi
    \bigr)(g)
    \cdot
    \underline{z}^\alpha
    \in
    \C\formal{\underline{z}}
\end{equation}
the Lie-Taylor series of $\phi$ at the point $g \in G$. This is warranted by the
following theorem. Note that we employ Einstein's summation convention.
\begin{theorem}[Lie-Taylor formula,
    {\cite[Cor.~4.6]{heins.roth.waldmann:2023a}}]
    \label{thm:LieTaylor}%
    Let $G$ be a Lie group, $g \in G$ and $\phi \in \Comega(G)$. Fix a basis
    $(\basis{e}_1, \ldots, \basis{e}_d)$ of $\liealg{g}$. Then there is a radius $R_g > 0$
    such that
    \begin{equation}
        \label{eq:LieTaylor}
        \phi
        \bigl(
            g
            \exp(x^k \basis{e}_k)
        \bigr)
        =
        \Taylor_\phi(\underline{x},g)
    \end{equation}
    for all $\underline{x} = (x^1, \ldots, x^d) \in \R^d$ with $\supnorm{\underline{x}}
    < R_g$.
\end{theorem}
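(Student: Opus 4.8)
The plan is to obtain the multivariable statement directly from the one-variable Lie-Taylor formula of Proposition~\ref{prop:LieTaylorOneVariable} by choosing the single direction $\xi \coloneqq x^k\basis{e}_k \in \liealg{g}$, evaluating the expansion at time $t = 1$, and then expanding each power $\Lie_{X_\xi}^k$ into the individual basis directions via the algebra morphism property of $\Lie$ from \eqref{eq:LieDerivative}. The point is that \eqref{eq:LieTaylorOneVariable} already contains both the analytic content (convergence) and the identification of coordinate derivatives with Lie derivatives, so only an algebraic reorganisation remains.

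First I would secure that $t = 1$ lies within the disk of validity. The case $\underline{x} = 0$ is trivial, so assume $\underline{x} \neq 0$. For $\xi = x^k\basis{e}_k$ one has $\supnorm{\xi} = \supnorm{\underline{x}}$, and since rescaling the direction merely rescales the time variable, the radius from Remark~\ref{rem:UniformChoices} scales as $R_g(\xi) = R_g(\hat{\xi})/\supnorm{\xi}$ with $\hat{\xi} \coloneqq \xi/\supnorm{\xi}$ of unit supnorm. Taking $R_g \coloneqq \min_{\supnorm{\chi} \le 1} R_g(\chi) > 0$ as the uniform radius, we get $R_g(\xi) \ge R_g/\supnorm{\xi}$, so that $\supnorm{\underline{x}} < R_g$ forces $R_g(\xi) > 1$. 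Hence Proposition~\ref{prop:LieTaylorOneVariable} applies at $t = 1$ and yields, with convergent right-hand side,
\begin{equation}
    \phi\bigl(g\exp(\xi)\bigr)
    =
    \sum_{k=0}^{\infty}
    \frac{1}{k!}
    \bigl(
        \Lie_{X_\xi}^k \phi
    \bigr)(g).
\end{equation}

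The remaining step is purely algebraic. Using that $\xi \mapsto \Lie_{X_\xi} = \Lie(\xi)$ is linear and that $\Lie$ is an algebra morphism, I would expand the $k$-fold product in $\Universal^\bullet(\liealg{g})$,
\begin{equation}
    \xi^k
    =
    (x^j\basis{e}_j)^k
    =
    \sum_{\alpha \in \N_d^k}
    \underline{x}^\alpha \,
    \basis{e}_{\alpha_1} \cdots \basis{e}_{\alpha_k},
\end{equation}
and apply $\Lie$ to obtain $\Lie_{X_\xi}^k = \Lie(\xi)^k = \Lie(\xi^k) = \sum_{\alpha \in \N_d^k} \underline{x}^\alpha\, \Lie(\alpha)$. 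Substituting this into the displayed series turns its $k$-th summand into exactly $\frac{1}{k!}\sum_{\alpha \in \N_d^k} \underline{x}^\alpha (\Lie(\alpha)\phi)(g)$, which is the degree-$k$ homogeneous part of the Lie-Taylor series $\Taylor_\phi(g,\underline{x})$ from \eqref{eq:LieTaylorFormal}. Since $\exp(\xi) = \exp(x^k\basis{e}_k)$, identifying the two series term by term establishes \eqref{eq:LieTaylor}.

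Because the analytic heart is inherited from Proposition~\ref{prop:LieTaylorOneVariable} and the uniform-radius Remark~\ref{rem:UniformChoices}, I expect the only delicate point to be bookkeeping rather than analysis. Specifically, one must verify that grouping the multivariable series by total degree $n$ matches precisely the single index $k$ of the one-variable series, so that equating partial sums is legitimate, and that the noncommutative expansion of $\xi^k$ respects the ordering convention encoded in $\Lie(\alpha)$. This degree-by-degree matching means no rearrangement of the doubly-indexed family is required, which sidesteps any absolute-convergence subtleties.
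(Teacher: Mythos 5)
Your proposal is correct and follows essentially the same route as the paper: the paper likewise invokes the uniform radius $R_g$ from Remark~\ref{rem:UniformChoices} to apply the one-variable formula \eqref{eq:LieTaylorOneVariable} to $\xi = x^k\basis{e}_k$, then expands $\xi^n$ in $\Universal^\bullet(\liealg{g})$ and uses the morphism property of $\Lie$ to obtain $\Lie(\xi)^n = \sum_{\alpha \in \N_d^n} \Lie(\alpha)\,\underline{x}^\alpha$. Your explicit rescaling argument securing $R_g(\xi) > 1$ at $t=1$ merely spells out what the paper leaves implicit, and the degree-by-degree matching you flag is exactly how the paper identifies the two series.
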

\begin{proof}
    We define $R_g$ as discussed in Remark~\ref{rem:UniformChoices} corresponding
    to the basis $(\basis{e}_1, \ldots, \basis{e}_d)$. Consequently, we may apply
    \eqref{eq:LieTaylorOneVariable} to the direction $\xi \coloneqq x^k \basis{e}_k$
    for all $\underline{x} = (x^1, \ldots, x^d) \in \R^d$ with
    $\supnorm{\underline{x}} < R_g$. Establishing the identity \eqref{eq:LieTaylor} is
    now purely a matter of multiplying out within the universal enveloping algebra
    $\Universal^\bullet(\liealg{g})$. Indeed, we have
    \begin{equation}
        \xi^n
        =
        (x^k \basis{e}_k)^n
        =
        \sum_{\alpha \in \N_d^n}
        \basis{e}_{\alpha_n}
        \cdots
        \basis{e}_{\alpha_1}
        \cdot
        \underline{x}^\alpha
    \end{equation}
    and thus
    \begin{equation}
        \Lie(\xi)^n
        =
        \sum_{\alpha \in \N_d^n}
        \Lie(\alpha)
        \cdot
        \underline{x}^\alpha.
    \end{equation}
    Plugging this into \eqref{eq:LieTaylorOneVariable} proves \eqref{eq:LieTaylor} in
    view of \eqref{eq:LieTaylorFormal}.
\end{proof}

Pulling the absolute values all the way inside of \eqref{eq:LieTaylor}, we now define
the \emph{Lie-Taylor majorant} of $\phi \in \Cinfty(G)$ at $g \in G$ as the formal
power series
\begin{equation}
    \label{eq:LieTaylorMajorant}
    \Majorant_\phi(g,\underline{z})
    \coloneqq
    \sum_{n=0}^{\infty}
    c_{n}(\phi,g)
    \cdot
    \underline{z}^n
    \in
    \C\formal{\underline{z}},
\end{equation}
where
\begin{equation}
    \label{eq:LieTaylorMajorantCoefficients}
    c_n(\phi,g)
    \coloneqq
    \frac{1}{n!}
    \sum_{\alpha \in \N_d^n}
    \abs[\Big]
    {
        \bigl(
        \Lie(\alpha)\phi
        \bigr)(g)
    }
    \qquad
    \textrm{for all }
    n \in \N_0.
\end{equation}
As its name suggests, finiteness of the Lie-Taylor majorant
$\Majorant_\phi(g,\underline{z})$ implies the convergence of the Lie-Taylor series
$\Taylor_\phi(g,\underline{z})$ from \eqref{eq:LieTaylorFormal}. Note that we have
suppressed the choice of basis in our notation, as $\Majorant_\phi(g,\argument) \in
\Holomorphic(\C)$ for one basis implies that $\Majorant_\phi(g,\argument) \in
\Holomorphic(\C)$ for any basis. Similarly, it is the content of
Lemma~\ref{lem:MajorantTranslation} that $\Majorant_\phi(g,\argument) \in
\Holomorphic(\C)$ implies the same statement for all other points in the connected
component of $g$. This leads us to the notion of \emph{entire functions} on $G$.
That is to say, we demand that the Lie-Taylor majorant~$\Majorant_\phi$ at the
group unit defines an entire function of one complex variable.
\begin{definition}[Entire functions, {\cite[Def.~4.10]{heins.roth.waldmann:2023a}}]
    \label{def:EntireFunction}%
    Let $G$ be a connected Lie group.
    \begin{definitionlist}
        \item A real-analytic function $\phi \in \Comega(G)$ is called entire
        if its Lie-Taylor majorant
        \begin{equation}
            \Majorant_\phi(\E,\argument)
            \colon
            \C
            \longrightarrow
            \C
        \end{equation}
        at the group unit is holomorphic.
        \item We denote the set of all entire functions on $G$ by
        \begin{equation}
            \Entire(G)
            \coloneqq
            \bigl\{
                \phi \in \Comega(G)
                \colon
                \Majorant_{\phi}
                \in
                \Holomorphic(\C)
            \bigr\}.
        \end{equation}
        \item For $r \ge 0$, we define a family of seminorms on $\Entire(G)$ by
        \begin{equation}
            \label{eq:EntireSeminorms}
            \seminorm{q}_r(\phi)
            \coloneqq
            \Majorant_{\phi}(r)
        \end{equation}
        and endow $\Entire_0(G)$ with the corresponding locally convex topology.
    \end{definitionlist}
\end{definition}

The advantage of working with $\Majorant_\phi$ instead of with the seminorms
\eqref{eq:EntireSeminorms} is that $\Majorant_\phi$ is a holomorphic function of
one complex variable. This simple shift of perspective allows for the utilization of
many useful complex analytic techniques, such as the Cauchy estimates and
Montel's Theorem. A comprehensive discussion of $\Entire(G)$ based on this flexible
point of view can be found in \cite[Sec.~4.2 \&
Sec.~4.3]{heins.roth.waldmann:2023a}. We only note the following property, which is
crucial for the statement of our main results and also enters into their proofs.
\begin{proposition}[Functoriality of $\Entire$
{\cite[Prop.~4.21]{heins.roth.waldmann:2023a}}]
\label{prop:EntireFunctoriality}%
    Let $\Phi \colon G \longrightarrow H$ be a Lie group morphism between
    connected Lie groups $G$ and $H$. Then the pullback with $\Phi$ constitutes a
    morphism of Fréchet algebras
    \begin{equation}
        \label{eq:EntireFunctoriality}
        \Phi^*
        \colon
        \Entire(H)
        \longrightarrow
        \Entire(G).
    \end{equation}
\end{proposition}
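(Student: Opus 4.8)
The plan is to reduce the entire statement to a single intertwining identity for left-invariant differential operators under $\Phi$ and then to majorize coefficient-wise. Since a Lie group morphism is real-analytic for the exponential atlas, the pullback already maps $\Comega(H)$ into $\Comega(G)$, and it is manifestly a unital algebra homomorphism because $(\psi_1\psi_2)\circ\Phi = (\psi_1\circ\Phi)(\psi_2\circ\Phi)$ and $\Phi^*1 = 1$. Hence the whole problem is to control the Lie-Taylor majorant of $\Phi^*\psi = \psi\circ\Phi$ at the unit and to read off continuity from that control.

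The key step is the observation that $\Phi$ intertwines left-invariant vector fields. Writing $\Phi_* \coloneqq T_\E\Phi\colon\liealg{g}\to\liealg{h}$ for the induced Lie algebra morphism, the relation $\Phi\circ\ell_g = \ell_{\Phi(g)}\circ\Phi$ differentiated at $\E$ shows that $X_\xi$ on $G$ and $X_{\Phi_*\xi}$ on $H$ are $\Phi$-related, whence $\Lie_{X_\xi}(\Phi^*\psi) = \Phi^*(\Lie_{X_{\Phi_*\xi}}\psi)$ for every $\xi\in\liealg{g}$. I would then iterate this first-order identity and expand in coordinates: with the fixed basis $(\basis{e}_1,\ldots,\basis{e}_d)$ of $\liealg{g}$, a basis $(\basis{f}_1,\ldots,\basis{f}_m)$ of $\liealg{h}$ where $m = \dim H$, and the matrix $\Phi_*\basis{e}_j = A^i_j\basis{f}_i$, the multilinearity of $(\zeta_1,\ldots,\zeta_n)\mapsto\Lie_{X_{\zeta_1}}\circ\cdots\circ\Lie_{X_{\zeta_n}}$ in its directions yields, for every $\alpha\in\N_d^n$,
\[
    \bigl(\Lie(\alpha)(\Phi^*\psi)\bigr)(g)
    =
    \sum_{\beta\in\{1,\ldots,m\}^n}
    A^{\beta_1}_{\alpha_1}\cdots A^{\beta_n}_{\alpha_n}\,
    \bigl(\Lie(\beta)\psi\bigr)\bigl(\Phi(g)\bigr).
\]

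From here the estimate is pure bookkeeping. Setting $C \coloneqq \max_i\sum_j\abs{A^i_j}$, the triangle inequality together with interchanging the order of summation lets the $\alpha$-sum factor as $\sum_{\alpha}\prod_k\abs{A^{\beta_k}_{\alpha_k}} = \prod_k\bigl(\sum_{\alpha_k}\abs{A^{\beta_k}_{\alpha_k}}\bigr)\le C^n$, which gives the coefficient bound $c_n(\Phi^*\psi,g)\le C^n\,c_n(\psi,\Phi(g))$ and hence $\Majorant_{\Phi^*\psi}(g,r)\le\Majorant_\psi(\Phi(g),Cr)$ for all $r\ge 0$. Evaluating at $g = \E$, where $\Phi(\E) = \E$, and using that $\psi\in\Entire(H)$ makes $\Majorant_\psi(\E,\argument)$ holomorphic and thus the right-hand side finite for every $r$, I conclude $\Majorant_{\Phi^*\psi}(\E,\argument)\in\Holomorphic(\C)$, i.e. $\Phi^*\psi\in\Entire(G)$. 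The very same inequality, read as $\seminorm{q}_r(\Phi^*\psi)\le\seminorm{q}_{Cr}(\psi)$, dominates each defining seminorm on $\Entire(G)$ by one on $\Entire(H)$ and thereby establishes continuity, so that $\Phi^*$ is indeed a morphism of Fréchet algebras.

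I expect the main obstacle to be the careful derivation of the intertwining formula for the higher-order operators: first establishing the $\Phi$-relatedness of the left-invariant fields from the morphism property, and then tracking the non-commutative multi-index notation when iterating the first-order rule and expanding each $\Phi_*\basis{e}_{\alpha_k}$ in the basis of $\liealg{h}$, so that the composition order is preserved and the sums factor cleanly into the constant $C^n$. Once that identity is secured, the majorant estimate and the continuity statement are routine, and no appeal to the translation lemma is needed because $\Phi$ sends the unit to the unit.
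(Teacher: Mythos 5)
Your proof is correct and matches the intended argument: the paper states Proposition~\ref{prop:EntireFunctoriality} without an in-text proof, deferring to \cite[Prop.~4.21]{heins.roth.waldmann:2023a}, and your chain --- $\Phi$-relatedness of left-invariant fields via $\Phi \circ \ell_g = \ell_{\Phi(g)} \circ \Phi$, the iterated intertwining identity, and the factored coefficient bound $c_n(\Phi^*\psi,\E) \le C^n \, c_n(\psi,\E)$ with $C = \max_i \sum_j \abs{A^i_j}$, yielding $\seminorm{q}_r(\Phi^*\psi) \le \seminorm{q}_{Cr}(\psi)$ --- is exactly the standard route, delivering membership in $\Entire(G)$ and continuity from one inequality. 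I see no gaps: the analyticity of Lie group morphisms in the exponential atlas covers the $\Comega$-step, the order-preserving expansion of the non-commutative multi-indices is handled correctly, and your closing observation that $\Phi(\E) = \E$ obviates any appeal to the translation lemma is accurate.
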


As already noted, the purpose of this paper is to establish a one-to-one
correspondence between entire functions on $G$ and holomorphic ones on $G_\C$
for connected Lie groups. We split this into the problems of extension and restriction.
The latter simply poses the question, whether $\Holomorphic(G) \subseteq
\Entire(G)$ for a complex Lie group $G$. By the functoriality from
Proposition~\ref{prop:EntireFunctoriality}, a positive answer to this question implies
\begin{equation}
    \eta^*
    \bigl(
        \Holomorphic(G_\C)
    \bigr)
    \subseteq
    \Entire(G)
\end{equation}
for any real Lie group $G$. We provide the following positive resolution to the
restriction problem. Recall that an embedding $\iota \colon V \longrightarrow W$ of
locally convex spaces is a linear injection, which constitutes a homeomorphism onto
its image.
\begin{theorem}[Restriction]
    \label{thm:Restriction}
    Let $G$ be a complex connected Lie group. Then
    \begin{equation}
        \Holomorphic(G)
        \subseteq
        \Entire(G)
    \end{equation}
    induces an embedding of Fréchet algebras, where we endow $\Holomorphic(G)$
    with the topology of locally uniform convergence and products are taken
    pointwisely.
\end{theorem}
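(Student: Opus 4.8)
The plan is to reduce the whole statement to one inequality, namely a bound of the Lie--Taylor majorant at the unit against a supremum of $\abs\phi$ over a compact set, and then to read off membership, continuity and the homeomorphism property from it. Fix a complex basis $(\basis e_1,\dots,\basis e_m)$ of $\liealg g$ and let $J$ denote the complex structure, so that $(\basis e_1,\dots,\basis e_m,J\basis e_1,\dots,J\basis e_m)$ is a real basis and $d=2m$. First I would isolate the Cauchy--Riemann mechanism: since left translations on the complex group $G$ are biholomorphic, the left--invariant field $X_{J\xi}$ coincides with $JX_\xi$, whence $\Lie_{X_{J\xi}}\phi=i\,\Lie_{X_\xi}\phi$ for holomorphic $\phi$, with $i$ the imaginary unit. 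Because $\Lie_{X_\xi}$ maps holomorphic functions to holomorphic functions, this relation may be applied at every slot of an iterated derivative. Reducing each imaginary index modulo $m$ then yields $\abs{\Lie(\alpha)\phi(\E)}=\abs{\Lie(\tilde\alpha)\phi(\E)}$ for the reduced word $\tilde\alpha\in\N_m^n$, and since each $\tilde\alpha$ has exactly $2^n$ preimages one obtains
\[
    \Majorant_\phi(\E,z)=\sum_{n=0}^\infty \tilde c_n\,(2z)^n,
    \qquad
    \tilde c_n:=\frac1{n!}\sum_{\tilde\alpha\in\N_m^n}\abs{\Lie(\tilde\alpha)\phi(\E)}.
\]
Thus it suffices to control the purely holomorphic majorant $\sum_n\tilde c_n r^n$, built from the $m$ complex directions only.

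The analytic engine is the global holomorphy of $\exp\colon\liealg g\to G$, which holds because $G$ is complex. Consequently, for the \emph{fixed} number $m$ of variables, the ordered model function
\[
    H(\underline w):=\phi\bigl(\exp(w^1\basis e_1)\cdots\exp(w^m\basis e_m)\bigr)
\]
is entire on $\C^m$, and the iterated one--variable computation behind \eqref{eq:LieDerivativeHigherOrder} identifies its Taylor coefficients with the PBW--ordered iterated derivatives, $\partial_{\underline w}^{\,k}H(0)=\Lie\bigl(\basis e_1^{k_1}\cdots\basis e_m^{k_m}\bigr)\phi(\E)$. The decisive feature is that Cauchy estimates on a polydisk of radius $\rho$ now bound every such ordered derivative by $\abs k!\,\rho^{-\abs k}M(\rho)$, where $M(\rho):=\sup_{\abs{w^j}\le\rho}\abs{H(\underline w)}$ is a \emph{single finite constant} attached to a compact box of flows that does \emph{not} grow with the total degree. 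This is exactly the gain over the naive estimate, which unfolds a length--$n$ word into $n$ separate time variables and is forced to take a supremum over products of $n$ flows, i.e.\ over a set of diameter of order $n\rho$; that bound fails already for entire functions of order larger than one on the abelian group $\C^m$.

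The main obstacle is the passage from these ordered monomials to the full non--commutative sum defining $\tilde c_n$; phases along a single complex direction cannot separate words of the same commutative degree, so reordering inside the universal enveloping algebra is unavoidable. Writing $\basis e_{\tilde\alpha_1}\cdots\basis e_{\tilde\alpha_n}=\sum_u\lambda^u_{\tilde\alpha}\,u$ over PBW monomials $u$ of degree at most $n$, with coefficients $\lambda^u_{\tilde\alpha}$ polynomial in the structure constants, and inserting the Cauchy bound for each $\Lie(u)\phi(\E)$, gives
\[
    \tilde c_n\le\frac{M(\rho)}{n!}\sum_{\tilde\alpha\in\N_m^n}\sum_u\abs{\lambda^u_{\tilde\alpha}}\,\abs u!\,\rho^{-\abs u}.
\]
The heart of the proof is the combinatorial estimate of this double sum: one must show that the inflation of the coefficients $\lambda$ by the structure constants, weighed against the degree drop $\abs u\le n$ produced by the brackets, is dominated by the factor $1/n!$. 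In the abelian case the reordering is trivial, $\abs u=n$ and $\lambda\equiv1$, so the bound collapses to $\tilde c_n\le(m/\rho)^n M(\rho)$ and hence $\limsup_n\tilde c_n^{1/n}\le m/\rho$; letting $\rho\to\infty$ forces $\limsup_n\tilde c_n^{1/n}=0$, which is the entirety of the majorant. I expect the general case to yield a bound of the same shape $\tilde c_n\le C(\rho)\,(c/\rho)^n$, so that for each prescribed $r$ one chooses $\rho>cr$ and obtains convergence of $\sum_n\tilde c_n r^n$; tracking the structure constants through the normal--ordering recursion is where the real work lies.

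Finally, the inequality produced above is precisely of the form $\seminorm q_r(\phi)\le C(r)\sup_{K(r)}\abs\phi$ with $K(r)\subseteq G$ compact, which simultaneously establishes $\Holomorphic(G)\subseteq\Entire(G)$ and the continuity of the inclusion. For the reverse estimate, and hence for the inclusion being a homeomorphism onto its image, I would invoke the Lie--Taylor formula of Theorem~\ref{thm:LieTaylor} to dominate $\abs\phi$ on an exponential ball around $\E$ by the majorant, then cover an arbitrary compact subset of the connected group $G$ by finitely many translated exponential balls and propagate the bound by means of Lemma~\ref{lem:MajorantTranslation}. Since $\Holomorphic(G)$ is a subalgebra of $\Comega(G)$ under pointwise multiplication, the inclusion automatically respects products, and the two--sided estimates upgrade it to an embedding of Fréchet algebras.
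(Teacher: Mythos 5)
Your proposal contains a genuine gap at precisely the step you yourself flag as ``where the real work lies''. The reduction via the left-invariant Cauchy--Riemann relation $\Lie_{X_{J\xi}}\phi = \I\,\Lie_{X_\xi}\phi$ is correct (and matches \eqref{eq:CauchyRiemann}), as is the identification of the Taylor coefficients of $H(\underline{w}) = \phi(\exp(w^1\basis{e}_1)\cdots\exp(w^m\basis{e}_m))$ with the PBW-ordered derivatives, and your converse direction coincides with the paper's (Lie-Taylor formula plus Corollary~\ref{cor:MajorantsAsSeminorms}). But the central combinatorial estimate -- that the PBW reordering coefficients $\lambda^u_{\tilde\alpha}$, inflated by structure constants and weighed against the degree drop, stay dominated by $1/n!$ -- is only conjectured, and it is not a routine verification. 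Two concrete obstructions: first, the Cauchy bound for an ordered monomial of degree $n$ carries a factor of factorial size (already $k! \approx n!\,m^{-n}$ up to polynomial factors for typical multi-indices, and your lossier $\abs{k}! = n!$ outright), so the available $1/n!$ is essentially consumed by the \emph{leading} terms; second, a lower-order term $u$ with $\abs{u} = n-j$ contributes $\rho^{-(n-j)} = \rho^{j}\cdot\rho^{-n}$, i.e.\ the bracket terms are \emph{amplified} by $\rho^{j}$ relative to the leading term precisely in the regime $\rho \to \infty$ that you need for entirety at every radius $r$. Crude counting of reordering histories then produces factors of the shape $e^{c n \rho}$, which destroy the conclusion; making the counting sharp enough is exactly the delicate bookkeeping the paper attributes to Penney (\cite[Lemma~2]{penney:1974a}) and deliberately avoids. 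As written, your proof is therefore incomplete at its load-bearing step.

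Moreover, the route you rejected is in fact the paper's complete and rather short proof, and your reason for rejecting it rests on a misreading of the radii. Unfolding a length-$n$ word into $n$ separate complex time variables, one takes \emph{all radii equal to $r/n$}, not a fixed $\rho$: then only first-order derivatives occur in each variable, the usual Cauchy estimates give the factor $n^n/r^n$, and by left-invariance of the Riemannian distance together with $\varrho(\E,\exp\xi)\le\norm{\xi}$ and subadditivity, the product $\exp(z^n\xi_n)\cdots\exp(z^1\xi_1)$ with $\abs{z^j} = r/n$ stays inside the \emph{fixed} compact set $\Ball_r(\E)^\cl$ -- not a set of diameter of order $n\rho$. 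The resulting bound
\begin{equation}
    \abs[\Big]{\bigl(\Lie(\xi_1\cdots\xi_n)\phi\bigr)(\E)}
    \le
    \norm{\phi}_{\Ball_r(\E)^\cl}
    \cdot
    \frac{n^n}{r^n}
\end{equation}
yields $\seminorm{q}_{r'}(\phi) \le \norm{\phi}_{K}\sum_n \frac{(r'd)^n n^n}{n!\,r^n} < \infty$ once $r > r'de$, by $\lim_n \sqrt[n]{n!}/n = e^{-1}$; your counterexample of order greater than one (say $e^{z^2}$ on $\C$) passes this estimate for every $r'$, since $\sup_{\Ball_r}\abs{e^{z^2}} = e^{r^2}$ enters only as an $n$-independent constant. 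So the shrinking-radii unfolding is not the naive failure you describe but the mechanism of Lemmas~\ref{lem:CauchyEstimates} and~\ref{lem:CauchyEstimatesRiemannian}; had you adopted it, no PBW reordering and hence none of the unresolved combinatorics would be needed.
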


The proof is based on a Lie-theoretic version of the Cauchy estimates. We derive two
versions, one adapted to matrix Lie groups and operator norms, the other based on
left-invariant Riemannian geometry. The resulting estimates are interesting in their
own rights and facilitate an application from strict deformation quantization within
Theorem~\ref{thm:StarProductHolomorphicContinuity}.

Conversely, we consider extensions of entire functions $\phi$ on $G$ to
holomorphic mappings $\Phi$ on $G_\C$. Immediately, we run into the principal
problem that the universal complexification morphism $\eta \colon G
\longrightarrow G_\C$ needs not be injective, see
Example~\ref{ex:AutomaticPeriodicity} for a concrete and rather natural instance of
this phenomenon and its consequences in the context of the special linear group.
That being said, the pullback condition
\begin{equation}
    \label{eq:PullbackCondition}
    \eta^*
    \Phi
    \coloneqq
    \Phi
    \circ
    \eta
    =
    \phi
\end{equation}
remains meaningful, and this is what we mean by extension in the sequel.
The precise statement is the following.
\begin{theorem}[Extension]
    \label{thm:Extension}
    Let $G$ be a connected Lie group and let $\phi \in \Entire(G)$. Then there exists a
    unique holomorphic function
    \begin{equation}
        \Phi
        \in
        \Holomorphic(G_\C)
        \qquad
        \textrm{such that}
        \qquad
        \eta^* \Phi
        =
        \phi.
    \end{equation}
    It fulfils
    \begin{equation}
        \label{eq:ExtensionLieTaylor}
        \Lie(\xi)
        \Phi
        \at[\Big]{\eta(g)}
        =
        \Lie(\xi)
        \phi
        \at[\Big]{g}
        \qquad
        \textrm{for all }
        g \in G,
        \xi \in \Universal^\bullet
        (\liealg{g}_\C).
    \end{equation}
\end{theorem}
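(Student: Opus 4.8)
The plan is to prove Theorem~\ref{thm:Extension} by combining the restriction result (Theorem~\ref{thm:Restriction}) with a local extension argument near the unit, followed by a globalization step exploiting analytic continuation along paths and the universal property of $G_\C$.

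First, I would establish \emph{local extendability at the unit}. By the construction sketched at the start of Section~\ref{sec:Extension}, the universal complexification is built so that $\eta(\exp_G(\xi)) = \exp_{G_\C}(\eta_*\xi)$, where $\eta_* \colon \liealg{g} \to \liealg{g}_\C$ is the canonical (not necessarily injective) Lie algebra map whose image generates $\liealg{g}_\C$ as a complex Lie algebra. Given $\phi \in \Entire(G)$, the Lie-Taylor majorant $\Majorant_\phi(\E,\argument)$ is entire by hypothesis, so the Lie-Taylor series $\Taylor_\phi(\E,\underline{z})$ converges for \emph{all} $\underline{z} \in \C^d$, not merely real ones. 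The idea is to define the candidate extension locally near $\eta(\E)$ by
\begin{equation}
    \Phi\bigl(\exp_{G_\C}(\zeta^k \eta_*\basis{e}_k)\bigr)
    \coloneqq
    \Taylor_\phi(\E,\underline{\zeta}),
    \qquad \underline{\zeta} = (\zeta^1,\ldots,\zeta^d) \in \C^d,
\end{equation}
reading the real variable $\underline{x}$ in \eqref{eq:LieTaylor} as a complex variable $\underline{\zeta}$. The content here is that this prescription is well defined and holomorphic: I would verify that the majorant bound guarantees absolute and locally uniform convergence on a complex polydisk, and that whenever two choices of $\underline{\zeta}$ yield the same point of $G_\C$ (which can happen precisely because $\eta_*$ may have a kernel), the series values agree. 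The latter is exactly the compatibility forced by $\phi$ being a genuine function on $G$ together with the functoriality built into the Lie-Taylor coefficients.

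Second, I would \emph{globalize}. Because $G$ is connected and the entire-function property propagates across connected components via Lemma~\ref{lem:MajorantTranslation}, the local extension at $\eta(\E)$ can be transported to a neighbourhood of any $\eta(g)$ by using the left-translation structure: the identity \eqref{eq:ExtensionLieTaylor} that we want to prove is precisely the cocycle condition making these local pieces glue. Concretely, I would define $\Phi$ near $\eta(g)$ by the Lie-Taylor series of $\phi$ expanded at $g$, then check on overlaps of exponential charts that the locally defined holomorphic germs coincide, using the identity theorem for holomorphic functions of several complex variables together with the fact that they already agree on the totally-real slice $\eta(G)$. Here the convergence in $\Holomorphic(\Ball_R(0))$ from Proposition~\ref{prop:LieTaylorOneVariable}(ii), upgraded to the polydisk, provides the holomorphic germs, and $R_g > 0$ uniformly on compacta by Remark~\ref{rem:UniformChoices} gives a genuine open cover of $\eta(G)$. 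Since any point of $G_\C$ lies in the holomorphically generated sheet built from such exponentials, the germs assemble into a global $\Phi \in \Holomorphic(G_\C)$.

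Third, \emph{uniqueness and the derivative identity}. Uniqueness is immediate from the identity theorem: $\eta(G)$ generates $G_\C$ as a complex Lie group, so its $\C$-span of tangent directions is all of $\liealg{g}_\C$, whence $\eta(G)$ is not contained in any proper complex-analytic subset and two holomorphic extensions agreeing on $\eta(G)$ must coincide. The derivative identity \eqref{eq:ExtensionLieTaylor} then follows by differentiating the defining series termwise: for $\xi \in \liealg{g}$ the left-invariant derivative $\Lie(\xi)\Phi$ at $\eta(g)$ is computed from the same coefficients $(\Lie(\alpha)\phi)(g)$ that define $\Taylor_\phi(g,\argument)$, and the extension to $\xi \in \Universal^\bullet(\liealg{g}_\C)$ is handled by $\C$-linearity together with the algebra-morphism property of $\Lie$ in \eqref{eq:LieDerivative}. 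I expect the \textbf{main obstacle} to be the well-definedness in the first step: showing that the several-complex-variable series genuinely descends to a single-valued holomorphic function on $G_\C$ despite the potential non-injectivity of $\eta$, i.e.\ that the kernel directions of $\eta_*$ are annihilated by the Lie-Taylor data of any $\phi$ pulled back from $G$. This is where the geometry of $G_\C$ and the universal property must be used decisively, rather than formal series manipulation alone.
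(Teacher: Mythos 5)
Your outline has the right flavour (local Lie--Taylor extension, analytic continuation, identity principle for uniqueness), and you correctly flag well-definedness as the main obstacle --- but you then dispose of it with a claim that is false. You write that whenever two choices of $\underline{\zeta}$ yield the same point of $G_\C$, the agreement of the series values is ``exactly the compatibility forced by $\phi$ being a genuine function on $G$''. It is not: if $\eta$ is not injective, points $g \neq g'$ of $G$ with $\eta(g) = \eta(g')$ are genuinely distinct points of $G$, and nothing formal forces $\phi(g) = \phi(g')$ for an arbitrary real-analytic $\phi$. That entire functions \emph{do} satisfy this (automatic periodicity, cf.\ Example~\ref{ex:AutomaticPeriodicity}, where every $\phi \in \Entire(\widetilde{\liegroup{SL}}_2(\R))$ is forced to be $\pi_1$-periodic) is a \emph{consequence} of Theorem~\ref{thm:Extension}, obtained in the paper only after a global analytic continuation and an application of the identity principle --- it cannot be extracted locally from the Lie--Taylor data. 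Worse, when $T_\E\eta$ has a kernel (the dimension drop permitted by \eqref{eq:ComplexificationDimension}), your chart map $\underline{\zeta} \mapsto \exp_{G_\C}(\zeta^k \eta_* \basis{e}_k)$ is not even locally injective, and well-definedness would require $\phi$ to be constant along kernel directions --- again not a formal fact. The paper sidesteps all of this by first pulling back to the universal covering group $\Covering$, where $\tilde{\eta}$ \emph{is} locally injective (Proposition~\ref{prop:UniversalComplexification}, \ref{item:ComplexificationCovering}), so that the holomorphic shadow of Proposition~\ref{lem:HolomorphicShadows} is unambiguously defined there, and only at the very end descends to $G_\C$ by proving $\tilde{\eta}(\pi_1(G))$-invariance of the continued function via Corollary~\ref{cor:IdentityPrinciple}.

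Your globalization step has a second, independent gap: exponential neighbourhoods of points of $\eta(G)$ do not cover $G_\C$ (already for $G = \liegroup{U}(1)$, $G_\C = \C^\times$, such charts only fill an annulus around the unit circle), so gluing germs centered on $\eta(G)$ cannot ``assemble'' a global $\Phi$. One must continue analytically along arbitrary paths in $G_\C$, which needs two ingredients you do not supply: entirety of the Lie--Taylor majorant at the \emph{new} centers, which lie off $\eta(G)$ --- this is what Lemma~\ref{lem:MajorantTranslation} and Corollary~\ref{cor:MajorantEntire} provide for the shadow, not merely for $\phi$ on $G$ --- and single-valuedness of the continuation, which requires the Monodromy Theorem and hence simple connectivity, which $G_\C$ generally lacks ($\C^\times$ again). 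This is precisely why the paper runs the Steiner-chain continuation on $\Covering_\C$, which is always simply connected, and then descends through the quotient $G_\C = \Covering_\C \big/ \langle \tilde{\eta}(\pi_1(G)) \rangle_\C$ of \eqref{eq:ComplexificationFromUniversalCovering}. Without the passage to the cover, the monodromy argument, and the periodicity-based descent, the ``sheet'' you construct may be multivalued on $G_\C$, so the proposal as written does not yield the theorem.
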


In \eqref{eq:ExtensionLieTaylor}, we use that the vector space complexification
$\liealg{g}_\C$ of the Lie algebra always acts on $\Cinfty(G)$, even when $\eta$ is
not locally injective. The proof of Theorem~\ref{thm:Extension} is more involved than
the one of Theorem~\ref{thm:Restriction}, as it uses some geometric ingredients.
That being said, the rough strategy is the following. Starting with an entire
function~$\phi \in \Entire(G)$, we consider the pullback
\begin{equation}
    \psi
    \coloneqq
    p^* \phi
    \in
    \Entire(\Covering)
\end{equation}
to the universal covering group $p \colon \Covering \longrightarrow G$ of $G$. A
comprehensive discussion of coverings in the context of Lie theory can be found in
\cite[Appendix~A \& Sec.~9.4]{hilgert.neeb:2012a}. Using the local injectivity of the
universal complexification morphism $\tilde{\eta} \colon \Covering \longrightarrow
\Covering_\C$ in conjunction with the Lie-Taylor formula \eqref{eq:LieTaylor}, we
then define a \emph{holomorphic shadow} $\psi_0$ of~$\psi$ on a neighbourhood
of the group unit in $\Covering_\C$. This reduces the pullback
condition~\eqref{eq:PullbackCondition} to an honest problem about analytic
continuation. After porting some classical complex analysis into a left invariant guise,
we then construct a holomorphic extension of $\psi_0$ to a globally defined
holomorphic function on $\Covering$ by means of the Monodromy Theorem. As by
construction, $\psi$ is $\pi_1(G)$-periodic, we finally use the identity principle to
prove the required periodicity to ensure the descent of $\Psi$ to a holomorpic
function on $G_\C$. This mapping then provides the desired holomorphic extension
$\Phi$ of $\phi$.

Combining our results, we may establish that extension and restriction are inverse
processes and provide an isomorphism between the Fréchet algebras $\Entire(G)$
and $\Holomorphic(G_\field{C})$.
\begin{theorem}[Entire vs. Holomorphic]
    \label{thm:EntireVsHolomorphic}%
    Let $G$ be a connected Lie group. Then the pullback
    \begin{equation}
        \label{eq:EntireVsHolomorphicIso}
        \eta^*
        \colon
        \Holomorphic(G_\C)
        \longrightarrow
        \Entire(G), \quad
        \eta^*
        \Phi
        \coloneqq
        \Phi \circ \eta
    \end{equation}
    with the universal complexification morphism $\eta$ is well defined and an
    isomorphism of Fréchet algebras, where the products are defined pointwisely.
\end{theorem}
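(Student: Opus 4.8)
The plan is to read off this statement as the synthesis of Theorem~\ref{thm:Restriction} (restriction), Theorem~\ref{thm:Extension} (extension) and Proposition~\ref{prop:EntireFunctoriality} (functoriality), the only genuinely topological input being the open mapping theorem for Fréchet spaces. Concretely, I would first establish that $\eta^*$ lands in $\Entire(G)$ and is a continuous algebra homomorphism, then deduce bijectivity from the existence and uniqueness clauses of the extension theorem, and finally upgrade the continuous algebraic bijection to a topological isomorphism.

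For well-definedness, note that $G_\C$ is a connected complex Lie group: it is complex by construction and connected, as is evident from the concrete description of $G_\C$ studied at the start of Section~\ref{sec:Extension}. Theorem~\ref{thm:Restriction} therefore yields $\Holomorphic(G_\C) \subseteq \Entire(G_\C)$ as an embedding of Fréchet algebras, so in particular a continuous inclusion. Since $\eta \colon G \longrightarrow G_\C$ is a Lie group morphism between connected groups, Proposition~\ref{prop:EntireFunctoriality} provides a continuous algebra morphism $\eta^* \colon \Entire(G_\C) \longrightarrow \Entire(G)$. Composing the inclusion with this pullback shows that $\eta^*$ maps $\Holomorphic(G_\C)$ into $\Entire(G)$ and that the resulting map is continuous and multiplicative; multiplicativity is immediate, as pullback carries pointwise products to pointwise products.

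Bijectivity is where the heavy lifting of the previous sections is cashed in. Surjectivity is exactly the existence clause of Theorem~\ref{thm:Extension}: every $\phi \in \Entire(G)$ admits some $\Phi \in \Holomorphic(G_\C)$ with $\eta^* \Phi = \phi$. Injectivity follows from the uniqueness clause: if $\eta^* \Phi = 0$, then $\Phi$ and the zero function are both holomorphic extensions of $0 \in \Entire(G)$, whence they coincide. I would stress here that one cannot argue injectivity naively by the identity principle, since $\eta$ need not be injective and $\eta(G)$ need not have interior in $G_\C$; it is precisely the uniqueness statement of the extension theorem that circumvents this.

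It remains to see that the inverse is continuous, so that $\eta^*$ is a topological isomorphism rather than merely an algebraic one. Both spaces are Fréchet: $\Holomorphic(G_\C)$ carries the topology of locally uniform convergence on the finite-dimensional, second countable complex manifold $G_\C$, and $\Entire(G)$ is metrizable and complete, since the countable subfamily $(\seminorm{q}_r)_{r \in \N}$ already generates its topology. A continuous linear bijection between Fréchet spaces is automatically open by the open mapping theorem, whence $(\eta^*)^{-1}$ is continuous and $\eta^*$ is an isomorphism of Fréchet algebras. The main obstacle is not analytic but bookkeeping: verifying the connectedness of $G_\C$ so that functoriality applies, and the completeness of $\Entire(G)$ so that the open mapping theorem applies. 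All substantive analytic and geometric content has already been absorbed into Theorems~\ref{thm:Restriction} and~\ref{thm:Extension}.
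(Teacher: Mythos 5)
Your proposal is correct and follows essentially the same route as the paper's own proof: well-definedness and continuity of $\eta^*$ via Theorem~\ref{thm:Restriction} composed with the functoriality of Proposition~\ref{prop:EntireFunctoriality}, bijectivity from the existence and uniqueness clauses of Theorem~\ref{thm:Extension}, and continuity of the inverse from the Open Mapping Theorem for Fréchet spaces. Your phrasing of injectivity through the kernel, and your explicit remarks on the connectedness of $G_\C$ and the countable generating family $(\seminorm{q}_r)_{r \in \N}$, are only cosmetic refinements of the argument the paper gives.
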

This provides another strategy to obtain the pleasant functional analytic properties
of~$\Entire(G)$ discussed in \cite[Theorem~4.18]{heins.roth.waldmann:2023a}.
Assuming the validity of Theorem~\ref{thm:Restriction} and
Theorem~\ref{thm:Extension} for the moment, it is straightforward to prove
Theorem~\ref{thm:EntireVsHolomorphic}.
\begin{proof}
    Given $\Phi \in
    \Holomorphic(G_\C)$, Theorem~\ref{thm:Restriction} yields $\Phi \in
    \Entire(G_\C)$ and thus $\eta^* \Phi \in \Entire(G)$ by the functoriality
    of~$\Entire$ from Proposition~\ref{prop:EntireFunctoriality}.
    Consequently, Theorem~\ref{thm:Extension} asserts the existence of a unique
    holomorphic mapping $\Psi \in \Holomorphic(G_\C)$ such that
    \begin{equation}
        \eta^*
        \Psi
        =
        \eta^* \Phi.
    \end{equation}
    But, trivially, $\Phi$ also has these properties, which implies $\Psi = \Phi$ by
    uniqueness. Conversely, let $\phi \in \Entire(G)$ be given. By
    Theorem~\ref{thm:Extension}, there exists a unique holomorphic function
    \begin{equation}
        \Phi \in \Holomorphic(G_\C)
        \qquad \textrm{such that} \qquad
        \eta^* \Phi = \phi.
    \end{equation}
    That is to say, the restriction of $\Phi$ is just $\phi$ again. Consequently,
    \eqref{eq:EntireVsHolomorphicIso} is well defined and bijective.
    Combining the continuity of the inclusion $\Holomorphic(G_\C) \subseteq
    \Entire(G_\C)$ from Theorem~\ref{thm:Restriction} with the continuity of
    \begin{equation}
        \eta^*
        \colon
        \Entire(G_\C)
        \longrightarrow
        \Entire(G)
    \end{equation}
    from Proposition~\ref{prop:EntireFunctoriality} now establishes the continuity
    of \eqref{eq:EntireVsHolomorphicIso}. Finally, as we are dealing with Fréchet
    spaces, the Open Mapping Theorem \cite[Theorem~4.35]{osborne:2014a} implies
    that the continuity of the inverse is automatic. Alternatively, an explicit estimate
    may be derived by combining~\eqref{eq:ExtensionLieTaylor}, which asserts that
    extension preserves Lie-Taylor data, with the fact that the subspace topology of
    $\Holomorphic(G_\C)$ induced by $\Entire(G_\C)$ is the
    $\Holomorphic$-topology.
\end{proof}

We have thus demystified the structure of $\Entire(G)$.
\begin{remark}[$R$-Entire Functions]
    \label{rem:PositiveR}%
    Notably, \cite{heins.roth.waldmann:2023a} introduced not only $\Entire(G)$, but
    rather a spectrum of algebras $\Entire_R(G)$, where $R \ge 0$ is a parameter. This
    is achieved by altering the Lie-Taylor majorant \eqref{eq:LieTaylorMajorant} by
    means of a weight $n!^R$, which leads to
\begin{equation}
    \Majorant_\phi^{(R)}(g,\underline{z})
    \coloneqq
    \sum_{n=0}^\infty
    n!^R
    \cdot
    c_n(\phi,g)
    \cdot
    \underline{z}^n
    \in
    \C\formal{\underline{z}}.
\end{equation}
What we call $\Entire(G)$ then corresponds to $R=0$. As $\Entire_R(G) \subseteq
\Entire(G)$ for all $R \ge 0$, we know that every $R$-entire function comes from a
holomorphic mapping defined on $G_\C$ by Theorem~\ref{thm:Extension}. Notably,
the space $\Entire_R(\R^d)$ corresponds to the space of entire
functions~$\Holomorphic_{1/R}(\C^d)$ of finite order $\le 1/R$ and of minimal type
by \cite[Prop.~4.15~\textit{i.)} \& \textit{ii.)}]{heins.roth.waldmann:2023a}. We refer to
the textbook \cite{lelong.gruman:1986a} for a systematic treatment of the classical
theory. Using Theorem~\ref{thm:EntireVsHolomorphic}, we propose the definition
\begin{equation}
    \Holomorphic_{R}(G_\C)
    \coloneqq
    \bigl(
        \eta^*
    \bigr)^{-1}
    (
        \Entire_{1/R}(G)
    )
\end{equation}
    for the entire functions of finite order at most $R$ and of minimal type on $G_\C$.
    It would be interesting to describe these spaces intrinsically on $G_\C$ by means of
    suprema with respect to a suitable weight function. Moreover, it is not clear
    whether
    \begin{equation}
        \Holomorphic_R(G_\C)
        \neq \{0\}
        \qquad
        \textrm{for }
        R \ge 1,
    \end{equation}
     as the representative functions \eqref{eq:RepresentativeFunction} are contained
     within $\Entire_R(G)$ only for $R < 1$ in general by \cite[Thm.~4.23 \&
     Ex.~4.24]{heins.roth.waldmann:2023a}.
\end{remark}

Throughout the text, we are going to borrow some well-known methods from
Riemannian geometry, which we collect here for the sake of precision and the
convenience of the reader.
\begin{proposition}[Left-invariant Riemannian Geometry]
    \label{prop:RiemannianLeftInvariant}%
    Let $G$ be a connected Lie group and $\mathbb{g}_\E \in \liealg{g} \vee \liealg{g}$
    a non-degenerate symmetric two-form.
    \begin{propositionlist}
        \item \label{item:LeftTranslation}%
        Left translation of $\mathbb{g}_\E$ defines a left-invariant Riemannian
        metric $\mathbb{g}$ on $G$, which is explicitly given by
        \begin{equation}
            \label{eq:RiemannianLeftTranslation}
            \mathbb{g}
            \at[\Big]{g}
            (v_g,w_g)
            \coloneqq
            \ell^*_{g^{-1}}
            \mathbb{g}_\E
            (v_g, w_p)
            =
            \mathbb{g}_\E
            \bigl(
                T_\E \ell_g^{-1} v_g,
                T_\E \ell_g^{-1} w_g
            \bigr)
        \end{equation}
        for all $g \in G$ and $v_g,w_g \in T_g G$, where
        \begin{equation}
            T_\E \ell_g
            \colon
            \liealg{g}
            =
            T_\E G
            \longrightarrow
            T_g G
        \end{equation}
        denotes the tangent map of $\ell_g$.

        \item \label{item:RiemannianLengthInvariance}%
        The corresponding Riemannian length $\varrho$ on $G$ is left-invariant, i.e.
        fulfils
        \begin{equation}
            \label{eq:RiemannianLengthInvariance}
            \varrho
            \bigl(
                hg_1,
                hg_2
            \bigr)
            =
            \varrho
            \bigl(
                \ell_h g_1,
                \ell_h g_2
            \bigr)
            =
            \varrho(g_1,g_2)
            \qquad
            \textrm{for all }
            g_1,g_2,h \in G.
        \end{equation}

        \item \label{item:RiemannianLengthTopology}%
        The metric topology of $(G,\varrho)$ coincides with the original topology of
        $G$.

        \item \label{item:RiemannianLengthMultiplicativity}%
        We have
        \begin{equation}
            \label{eq:RiemannianLengthMultiplicativity}
            \varrho
            \bigl(
                \E, g_1 \cdots g_n
            \bigr)
            \le
            \varrho(\E, g_1)
            +
            \cdots
            +
            \varrho(\E, g_n)
            \qquad
            \textrm{for all }
            g_1, \ldots, g_n \in G.
        \end{equation}

        \item \label{item:RiemannianLengthLipschitz}%
        Let $\norm{\argument}$ be the norm induced by $\mathbb{g}$. Then
        \begin{equation}
            \label{eq:RiemannianLengthLipschitz}
            \varrho
            \bigl(
                \E, \exp \xi
            \bigr)
            \le
            \norm{\xi}
            \qquad
            \textrm{for all }
            \xi \in \liealg{g}.
        \end{equation}
    \end{propositionlist}
\end{proposition}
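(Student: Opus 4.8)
The plan is to prove the five assertions in turn, using throughout that the left-invariant vector fields $X_\xi$ form a global frame on which $\mathbb{g}$ is \emph{constant}. For part~\ref{item:LeftTranslation} I would simply declare $\mathbb{g}$ via the formula~\eqref{eq:RiemannianLeftTranslation} and check its defining properties against this frame: by construction $\mathbb{g}(X_\xi, X_\chi)$ equals the \emph{constant} function $\mathbb{g}_\E(\xi,\chi)$, which at once gives smoothness (the coefficients of $\mathbb{g}$ in the frame are constant, hence smooth) and positive-definiteness at every point (the linear isomorphism $T_\E\ell_g$ transports the inner product $\mathbb{g}_\E$ to a genuine inner product on $T_g G$). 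Left-invariance $\ell_h^*\mathbb{g} = \mathbb{g}$ then follows because $T\ell_h$ sends the value $X_\xi(g)$ to $X_\xi(hg)$, so it carries the left-invariant frame to itself while preserving the constant coefficients; equivalently one invokes $\ell_{(hg)^{-1}} \circ \ell_h = \ell_{g^{-1}}$ and the chain rule.

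Part~\ref{item:RiemannianLengthInvariance} is then formal. Writing $\varrho(g_1,g_2)$ as the infimum of $\int_0^1 \norm{\dot\gamma(t)} \D t$ over piecewise-smooth curves $\gamma$ from $g_1$ to $g_2$, the diffeomorphism $\ell_h$ is a Riemannian isometry by part~\ref{item:LeftTranslation} and hence preserves the length of every curve; since $\gamma \mapsto \ell_h \circ \gamma$ is a length-preserving bijection between the curves joining $g_1$ to $g_2$ and those joining $hg_1$ to $hg_2$, passing to infima yields~\eqref{eq:RiemannianLengthInvariance}. Granting this, parts~\ref{item:RiemannianLengthMultiplicativity} and~\ref{item:RiemannianLengthLipschitz} are short. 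For~\ref{item:RiemannianLengthMultiplicativity} I would induct on $n$: the metric triangle inequality gives $\varrho(\E, g_1 g_2) \le \varrho(\E, g_1) + \varrho(g_1, g_1 g_2)$, and left-invariance (with $h = g_1^{-1}$) identifies $\varrho(g_1, g_1 g_2) = \varrho(\E, g_2)$, settling $n=2$ and hence, by iteration, the general case. For~\ref{item:RiemannianLengthLipschitz} the curve $t \mapsto \exp(t\xi)$ on $[0,1]$ is the integral curve of $X_\xi$ through $\E$, so its velocity is $X_\xi(\exp(t\xi))$, which by left-invariance of $\mathbb{g}$ has constant norm $\norm{\xi}$; its length is therefore exactly $\norm{\xi}$, and as $\varrho$ is an infimum over \emph{all} connecting curves we conclude $\varrho(\E, \exp\xi) \le \norm{\xi}$.

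The real content is part~\ref{item:RiemannianLengthTopology}, the coincidence of the metric topology with the manifold topology, which is the standard theorem that the Riemannian distance of a connected Riemannian manifold induces the underlying topology; I would prove it locally and propagate by left-invariance. Fixing a chart around $\E$ with relatively compact domain, continuity of the metric coefficients furnishes constants $0 < c \le C$ with $c \norm{v}_{\mathrm{eucl}} \le \norm{v} \le C \norm{v}_{\mathrm{eucl}}$ uniformly on that neighbourhood; integrating along curves shows $\varrho$ is locally bi-Lipschitz to the Euclidean chart distance, so the two topologies agree near $\E$, and part~\ref{item:RiemannianLengthInvariance} transports this to every point since each $\ell_h$ is simultaneously a homeomorphism and a $\varrho$-isometry. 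The step requiring the most care is the lower bound $c\norm{v}_{\mathrm{eucl}} \le \norm{v}$: it is exactly what forbids distinct points from lying at $\varrho$-distance zero, so that $\varrho$ genuinely separates points, and it is here that positive-definiteness of $\mathbb{g}_\E$ (rather than mere non-degeneracy) enters. This is the main obstacle in the sense that all remaining assertions are formal consequences of left-invariance.
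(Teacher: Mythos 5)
Your proposal is correct and takes essentially the same route as the paper: parts (i), (ii), (iv) and (v) coincide with the paper's arguments (left-invariance via the frame of left-invariant fields, $\ell_h$ as an isometry inducing a length-preserving bijection of curve families, induction combined with \eqref{eq:RiemannianLengthInvariance}, and the exact length $\norm{\xi}$ of the curve $t \mapsto \exp(t\xi)$). The only deviation is part (iii), where the paper simply cites \cite[Theorem~13.29]{lee:2012a} while you sketch the standard local bi-Lipschitz chart comparison and propagate it by homogeneity — a more self-contained but equivalent treatment, and your aside that positive-definiteness of $\mathbb{g}_\E$ (rather than mere non-degeneracy, as the hypothesis literally states) is what the Riemannian length construction actually uses is an accurate reading of the statement's intent.
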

\begin{proof}
    A discussion of left-translations of vector fields can be found in
    \cite[Sec.~9.1.2]{hilgert.neeb:2012a} and the invariance is readily dualized and
    generalized to arbitrary tensor fields. Taking another look at
    \eqref{eq:RiemannianLeftTranslation}, the symmetry of $\mathbb{g}$ is clear and
    non-degeneracy follows from the injectivity of $T_\E \ell_g^{-1}$. This establishes
    \ref{item:LeftTranslation}. For the second part, recall that the Riemannian length of
    a smooth curve $\gamma \colon [0,1] \longrightarrow G$ is given by
    \begin{equation}
        \label{eq:RiemannianLength}
        L(\gamma)
        =
        \int_0^1
        \mathbb{g}
        \at[\Big]{\gamma(t)}
        \bigl(
            \gamma'(t),
            \gamma'(t)
        \bigr)^{1/2}
        \D t.
    \end{equation}
    Let now $g \in G$ and consider the translated curve $\ell_g \gamma$ given by
    $\ell_g \gamma(t) \coloneqq g \cdot \gamma(t)$ for all~$t \in [0,1]$. Then
    \begin{equation}
        \bigl(
            \ell_g \gamma
        \bigr)'(t)
        =
        T_{\gamma(t)}
        \ell_g
        \gamma'(t)
        \qquad
        \textrm{for all }
        t \in [0,1],
    \end{equation}
    as
    \begin{equation}
        \bigl(
            \ell_g \gamma
        \bigr)'(t)
        \phi
        =
        \frac{\D}{\D s}
        \phi
        \bigl(
            g \gamma(t+s)
        \bigr)
        \at[\Big]{s=0}
        =
        \frac{\D}{\D s}
            \ell_g^* \phi
            \bigl(
                \gamma(t+s)
            \bigr)
        \at[\Big]{s=0}
        =
        \bigl(
            T_{\gamma(t)}
            \ell_g
            \gamma'(t)
        \bigr)
        \phi
    \end{equation}
    for all smooth germs $\phi \in \Cinfty_{g \gamma(t)}(G)$ at $g\gamma(t)$. The
    left-invariance of $\mathbb{g}$ thus yields
    \begin{align}
        L(\ell_g \gamma)
        &=
        \int_0^1
        \mathbb{g}
        \at[\Big]{\gamma(t)}
        \bigl(
            T_{\gamma(t)}
            \ell_g
            \gamma'(t),
            T_{\gamma(t)}
            \ell_g
            \gamma'(t)
        \bigr)^{1/2}
        \D t \\
        &=
        \int_0^1
        \ell_g^* \mathbb{g}
        \at[\Big]{\gamma(t)}
        \bigl(
            \gamma'(t),
            \gamma'(t)
        \bigr)^{1/2}
        \D t \\
        &=
        \int_0^1
        \mathbb{g}
        \at[\Big]{\gamma(t)}
        \bigl(
        \gamma'(t),
        \gamma'(t)
        \bigr)^{1/2}
        \D t \\
        &=
        L(\gamma).
    \end{align}
    Recall now that the Riemannian distance is given by
    \begin{equation}
        \label{eq:RiemannianLengthMetric}
        \varrho(g_1,g_2)
        =
        \inf_{\gamma \in \Gamma(g_1,g_2)}
        L(\gamma)
        \qquad
        \textrm{for all }
        g_1,g_2 \in G,
    \end{equation}
    where $\Gamma(g,h)$ denotes the set of all smooth curves connecting $g$ with
    $h$. As the passage from $\gamma$ to $\ell_g \gamma$ constitutes a bijection
    between $\Gamma(g_1,g_2)$ and $\Gamma(g g_1, g g_2)$, this
    establishes~\ref{item:RiemannianLengthInvariance}.
    Part~\ref{item:RiemannianLengthTopology} is true on any Riemannian manifold
    by \cite[Theorem~13.29]{lee:2012a}. We prove
    \ref{item:RiemannianLengthMultiplicativity} by means of a simple induction. For
    $n=1$, there is nothing to be shown, and assuming the statement for $n-1$
    factors yields
    \begin{equation}
        \varrho
        \bigl(
            \E, g_1 \cdots g_n
        \bigr)
        =
        \varrho
        \bigl(
            g_1^{-1},
            g_2 \cdots g_n
        \bigr)
        \le
        \varrho
        \bigl(
            g_1^{-1},
            \E
        \bigr)
        +
        \varrho
        \bigl(
            \E,
            g_2 \cdots g_n
        \bigr)
        \le
        \varrho(\E, g_1)
        +
        \cdots
        +
        \varrho(\E, g_n),
    \end{equation}
    where we have invoked \eqref{eq:RiemannianLengthInvariance} several times and
    used the triangle inequality. Finally, note that \ref{item:RiemannianLengthLipschitz}
    may be interpreted as the Lipschitz continuity of
    \begin{equation}
        \exp
        \colon
        (K, \norm{\argument})
        \longrightarrow
        (G,\varrho)
    \end{equation}
    evaluated at the pair~$(0,\xi)$. Fix $\xi \in K$ and consider the curve
    \begin{equation}
        \gamma
        \colon [0,1]
        \longrightarrow
        G, \quad
        \gamma(t)
        \coloneqq
        \exp(t\xi).
    \end{equation}
    As our curve $\gamma$ connects $\E$ with $\exp \xi$, we get
    \begin{align}
        \varrho
        \bigl(
            \E, \exp \xi
        \bigr)
        &\le
        L(\gamma) \\
        &=
        \int_0^1
        \mathbb{g}
        \at[\Big]
        {\exp(t\xi)}
        \bigl(
            T_\E \ell_{\exp(t \xi)} \xi,
            T_\E \ell_{\exp(t \xi)} \xi
        \bigr)^{1/2}
        \D t \\
        &=
        \int_0^1
        \bigl(
            \ell_{\exp(t \xi)}^* \mathbb{g}
        \bigr)
        \at[\Big]{\E}
        (\xi,\xi)^{1/2}
        \D t \\
        &=
        \int_0^1
        \mathbb{g}
        \at[\Big]{\E}
        (\xi,\xi)^{1/2}
        \D t \\
        &=
        \mathbb{g}_\E
        (\xi,\xi)^{1/2} \\
        &=
        \norm{\xi}
    \end{align}
    by virtue of the infimum in \eqref{eq:RiemannianLengthMetric},
    \eqref{eq:RiemannianLength} and the left-invariance of $\mathbb{g}$.
\end{proof}

If $G$ is compact, then one can average the pullbacks $r_g^* \mathbb{g}$ by the
right multiplications $r_g$ with $g \in G$ over the group to obtain a bi-invariant
metric. In this case, the Riemannian geodesics are precisely given by $t \mapsto g
\exp(t\xi)$ for $g \in G$ and $\xi \in \liealg{g}$ by \cite[Exercise~15.5.]{tu:2017a}. Our
proof shows that \eqref{eq:RiemannianLengthLipschitz} then becomes an equality.

\section{Extension: Function Theory on $G_\C$}
\label{sec:Extension}%

We collect the insights into the fine structure of $G_\C$, which we require for the
proof of Theorem~\ref{thm:Extension}. This knowledge then allows us to recast
many results from classical complex analysis into a left-invariant guise. More
comprehensive discussions including numerous examples can
be found in \cite[Sec.~15.1]{hilgert.neeb:2012a}.
\begin{proposition}[Geometry of $G_\C$
{\cite[Theorem~15.1.4]{hilgert.neeb:2012a}}] \; \\
    \label{prop:UniversalComplexification}%
    Let $G$ be a connected Lie group with universal covering group $p \colon
    \Covering \longrightarrow G$ and fundamental group $\pi_1(G) \coloneqq \ker p$.
    \begin{propositionlist}
        \item \label{item:ComplexificationCovering}%
        The universal complexification $(\Covering_\C, \tilde{\eta})$ of $\Covering$ may
        be realized as the unique connected and simply connected complex Lie group
        with complex Lie algebra $\liealg{g}_\C \coloneqq \liealg{g} \tensor_\R \C$ and
        the integration of the canonical inclusion of $\liealg{g}$ into $\liealg{g}_\C$. In
        particular,
        \begin{equation}
            \tilde{\eta}
            \colon
            \tilde{G}
            \longrightarrow
            \tilde{G}_\C
        \end{equation}
        is locally injective.

        \item \label{item:ComplexificationAsQuotient}%
        The universal complexification $(G_\C,\eta)$ of $G$ may be realized as the
        quotient
        \begin{equation}
            \label{eq:ComplexificationFromUniversalCovering}
            G_\C
            \coloneqq
            \widetilde{G}_\C
            \Big/
            \big<
            \widetilde{\eta}(\pi_1(G))
            \big>_{\C}
            \quad \textrm{and} \quad
            \eta
            \colon
            G
            \longrightarrow
            G_\C, \quad
            \eta
            \bigl(
            p(g)
            \bigr)
            \coloneqq
            \pi
            \bigl(
            \widetilde{\eta}(g)
            \bigr)
        \end{equation}
        with the quotient projection $\pi \colon \tilde{G}_\C \longrightarrow G_\C$ and
        where $\langle \argument \rangle_\C$ denotes the passage to the smallest
        closed complex subgroup of $G_\C$ containing the argument. In particular, the
        universal complexification $G_\C$ is connected itself and
        \begin{equation}
            \label{eq:ComplexificationDimension}
            \dim_\C(G_\C)
            \le
            \dim_\R(G).
        \end{equation}
    \end{propositionlist}
\end{proposition}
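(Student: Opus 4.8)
The plan is to build the universal complexification of $G$ from that of its simply connected cover $\Covering$, treating the two parts in order.

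For part~\ref{item:ComplexificationCovering}, I would first invoke Lie's third theorem in the complex-analytic category to obtain the unique connected, simply connected complex Lie group $\Covering_\C$ with complex Lie algebra $\liealg{g}_\C = \liealg{g} \tensor_\R \C$. Because $\Covering$ is simply connected, the canonical inclusion $\liealg{g} \hookrightarrow \liealg{g}_\C$ of real Lie algebras integrates to a Lie group morphism $\tilde{\eta} \colon \Covering \longrightarrow \Covering_\C$ whose differential $T_\E \tilde{\eta}$ is precisely that inclusion. To check the universal property, let $\Phi \colon \Covering \longrightarrow H$ be any morphism into a complex Lie group $H$. Its differential $T_\E \Phi$ maps $\liealg{g}$ into the complex Lie algebra $\liealg{h} = \LieAlg(H)$, so the universal property of the Lie algebra complexification extends it uniquely to a \emph{complex-linear} Lie algebra morphism $\liealg{g}_\C \longrightarrow \liealg{h}$; integrating over the simply connected $\Covering_\C$ produces a holomorphic morphism $\Phi_\C$, and comparing differentials (together with connectedness of $\Covering$) gives $\Phi_\C \circ \tilde{\eta} = \Phi$, with uniqueness following from the same rigidity. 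Local injectivity of $\tilde{\eta}$ is then immediate: $T_\E \tilde{\eta}$ is injective, so $\tilde{\eta}$ is an immersion at $\E$ and, by left-translating this fact along the morphism, everywhere.

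For part~\ref{item:ComplexificationAsQuotient}, set $N \coloneqq \langle \tilde{\eta}(\pi_1(G)) \rangle_\C$. The key structural input is that $\pi_1(G) = \ker p$ is a discrete \emph{central} subgroup of $\Covering$, being a discrete normal subgroup of a connected group. Now the smallest closed complex subgroup containing $\tilde{\eta}(\Covering)$ is all of $\Covering_\C$, since its Lie algebra is the full $\liealg{g}_\C$; and for $z \in \pi_1(G)$ the relation $\tilde{\eta}(z)\tilde{\eta}(g) = \tilde{\eta}(zg) = \tilde{\eta}(gz) = \tilde{\eta}(g)\tilde{\eta}(z)$ shows that the centralizer of $\tilde{\eta}(z)$ — a closed complex subgroup — contains $\tilde{\eta}(\Covering)$ and hence equals $\Covering_\C$. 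Thus each $\tilde{\eta}(z)$ is central in $\Covering_\C$; as the center is itself a closed complex subgroup, $N$ lies in the center, so $N$ is a closed normal complex subgroup and $G_\C \coloneqq \Covering_\C / N$ is a connected complex Lie group with quotient projection $\pi$. Defining $\eta(p(g)) \coloneqq \pi(\tilde{\eta}(g))$, well-definedness follows because $p(g) = p(g')$ forces $g' = gz$ with $z \in \pi_1(G)$, whence $\tilde{\eta}(g') = \tilde{\eta}(g)\tilde{\eta}(z)$ and $\pi$ annihilates $\tilde{\eta}(z) \in N$.

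Finally, I would verify the universal property of $(G_\C, \eta)$ by descent. Given $\Phi \colon G \longrightarrow H$ with $H$ complex, apply part~\ref{item:ComplexificationCovering} to $\Phi \circ p$ to get a holomorphic $\Psi \colon \Covering_\C \longrightarrow H$ with $\Psi \circ \tilde{\eta} = \Phi \circ p$. For $z \in \pi_1(G)$ one has $\Psi(\tilde{\eta}(z)) = \Phi(p(z)) = \Phi(\E) = \E$, so $\tilde{\eta}(\pi_1(G)) \subseteq \ker \Psi$; since $\ker \Psi$ is a closed complex subgroup it contains $N$, and $\Psi$ descends through $\pi$ to a holomorphic $\Phi_\C \colon G_\C \longrightarrow H$ with $\Phi_\C \circ \eta = \Phi$ after cancelling the surjection $p$. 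Uniqueness reduces to that in part~\ref{item:ComplexificationCovering}, and the dimension bound is then clear from $\dim_\C \Covering_\C = \dim_\R \liealg{g} = \dim_\R G$ together with $\dim_\C G_\C = \dim_\C \Covering_\C - \dim_\C N$. I expect the genuine obstacle to be exactly the quotient construction: one must secure simultaneously that $N$ is closed, complex, and normal so that $G_\C$ is an honest complex Lie group, and the centrality argument above — which hinges on $\pi_1(G)$ being central in $\Covering$ and on $\tilde{\eta}(\Covering)$ generating $\Covering_\C$ — is the only clean route I see; everything else is differential bookkeeping and the rigidity of morphisms out of connected groups.
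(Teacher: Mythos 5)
Your proposal is correct and takes the same route as the paper, whose proof simply defers to the standard construction in \cite[Theorem~15.1.4]{hilgert.neeb:2012a}; what you have written out is exactly that construction, integrating $\liealg{g} \hookrightarrow \liealg{g}_\C$ over the simply connected cover and then descending through the quotient. You also correctly isolate the genuinely delicate point, namely that $\tilde{\eta}(\pi_1(G))$ is central in $\Covering_\C$ (via centrality of $\ker p$ in $\Covering$ and the fact that the closed complex subgroup generated by $\tilde{\eta}(\Covering)$ is everything), so that $\langle \tilde{\eta}(\pi_1(G)) \rangle_\C$ is a closed normal complex subgroup and the quotient is an honest complex Lie group.
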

\begin{proof}
    The statements constitute the essence of the standard construction.
\end{proof}

Whenever we speak of $G_\C$ in the sequel, we refer to this specific realization. In
particular, this allows us to think of universal complexification as a functor. In terms
of a commutative diagram, our realization of $G_\C$ is given by
\begin{equation}
    \label{eq:ComplexificationFromUniversalCoveringDiagram}
    \begin{tikzcd}[column sep = huge]
        \widetilde{G}
        \arrow[d, "p", swap]
        \arrow[r, "\tilde{\eta}"]
        &\tilde{G}_\C
        \arrow[d, "\pi"] \\
        G
        \arrow[r, "\eta"]
        &G_\C
    \end{tikzcd}.
\end{equation}
The possibility of a dimension drop of $G_\C$ compared to $G$ in the sense of a
strict inequality within \eqref{eq:ComplexificationDimension} stem from passing
to the closure within~\eqref{eq:ComplexificationFromUniversalCovering}. A simple
example based on an infinite cyclic covering can be found in
\cite[Exercise~15.1.4]{hilgert.neeb:2012a}.

As noted before, the first step of our proof is to establish an honest problem about
analytic continuation. While $\eta$ itself may not be injective, the embedding of
$\eta(G)$ into $G_\C$ always is. Conveniently, $\eta(G)$ moreover carries the
structure of a Lie group itself, and we may prove that it complexifies to~$G_\C$
simply by checking the universal property.
\begin{lemma}
    \label{lem:ImageOfEta}%
    Let $G$ be a connected Lie group with universal complexification
    $(G_\field{C},\eta)$.
    \begin{lemmalist}
        \item \label{item:ImageOfEtaSubmanifold}%
        The image
        \begin{equation}
            \eta(G) \subseteq G_\C
        \end{equation}
        is a closed real Lie subgroup when endowed with the subspace topology.

        \item \label{item:ImageOfEtaComplexification}%
        The embedding $\iota \colon \eta(G) \hookrightarrow G_\C$ constitutes a
        universal complexification.

        \item \label{item:ImageOfEtaDimension}%
        Let $d \coloneqq \dim_\R(\eta(G))$. Then every basis
        \begin{equation}
            \bigl(
            \basis{e}_1, \ldots, \basis{e}_d
            \bigr)
            \subseteq
            \LieAlg\bigl(\eta(G)\bigr)
        \end{equation}
        induces a basis of $\LieAlg(G_\C)$ via $(\basis{e}_1, \ldots, \basis{e}_d, \I
        \basis{e}_1, \ldots, \I \basis{e}_d)$. In particular,
        \begin{equation}
            \label{eq:ImageOfEtaDimension}
            \dim_\C(G_\C)
            =
            \dim_\R
            \bigl(
                \eta(G)
            \bigr).
        \end{equation}

        \item \label{item:ComplexificationLocallyInjective}%
        If $\eta$ is locally injective, then
        \begin{equation}
            \label{eq:ComplexificationLocallyInjective}
            \LieAlg(G_\C)
            \cong
            \liealg{g}_\C.
        \end{equation}
    \end{lemmalist}
\end{lemma}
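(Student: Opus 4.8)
The plan is to exploit the canonical real structure of the universal complexification, treating the embeddedness statement \ref{item:ImageOfEtaSubmanifold} as the crux and deducing the rest from it. On the simply connected complex group $\Covering_\C$ with Lie algebra $\liealg{g}_\C$, the complex conjugation $c \colon \liealg{g}_\C \to \liealg{g}_\C$ with respect to the real form $\liealg{g}$ is a conjugate-linear Lie algebra automorphism; by simple connectedness it integrates to an anti-holomorphic involutive group automorphism $\sigma$ of $\Covering_\C$ with $\sigma \circ \exp = \exp \circ\, c$. Since $c$ fixes $\liealg{g}$ pointwise and $\tilde{\eta}$ integrates the inclusion $\liealg{g} \hookrightarrow \liealg{g}_\C$, the morphisms $\sigma \circ \tilde{\eta}$ and $\tilde{\eta}$ share the same differential and hence agree on the connected group $\Covering$; in particular $\sigma$ fixes $\tilde{\eta}(\pi_1(G))$ pointwise. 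As an anti-holomorphic automorphism $\sigma$ is a homeomorphism sending closed complex subgroups to closed complex subgroups (its differential $c$ carries complex subalgebras to complex subalgebras), so by minimality and involutivity it stabilizes $\langle \tilde{\eta}(\pi_1(G)) \rangle_\C$. Therefore $\sigma$ descends to an anti-holomorphic involution $\bar{\sigma}$ of the quotient $G_\C$ from \eqref{eq:ComplexificationFromUniversalCovering}.

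Next I would identify $\eta(G)$ with the identity component of the fixed-point group $G_\C^{\bar{\sigma}}$. Writing $\liealg{n} \coloneqq \LieAlg\langle \tilde{\eta}(\pi_1(G)) \rangle_\C$, so that $\LieAlg(G_\C) = \liealg{g}_\C / \liealg{n}$, the subalgebra $\liealg{n}$ is a conjugation-stable complex subalgebra and thus the complexification of $\liealg{n} \cap \liealg{g}$. The fixed-point group $G_\C^{\bar{\sigma}}$ is a closed embedded Lie subgroup whose Lie algebra is the $(+1)$-eigenspace of the conjugation induced on $\liealg{g}_\C / \liealg{n}$, which is exactly $\liealg{h} \coloneqq \LieAlg(\eta)(\liealg{g}) = \liealg{g} / (\liealg{n} \cap \liealg{g})$, the Lie algebra of the immersed subgroup $\eta(G)$. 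Since $\bar{\sigma}$ fixes $\eta(G)$ pointwise, $\eta(G) \subseteq G_\C^{\bar{\sigma}}$; being connected, containing $\E$, and of the same dimension $\dim_\R \liealg{h}$ as the embedded $G_\C^{\bar{\sigma}}$, it is open in $G_\C^{\bar{\sigma}}$ and hence equals the identity component $\bigl(G_\C^{\bar{\sigma}}\bigr)_0$. As the identity component of a closed subgroup, $\eta(G)$ is a closed embedded real Lie subgroup, which is \ref{item:ImageOfEtaSubmanifold}. I expect this to be the main obstacle: the delicate points are the existence and descent of $\sigma$ and, above all, the verification that the immersed group $\eta(G)$ exhausts $\bigl(G_\C^{\bar{\sigma}}\bigr)_0$ rather than being a proper immersed subgroup.

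Part \ref{item:ImageOfEtaComplexification} then follows formally from the universal property of $(G_\C, \eta)$. Given a morphism $\Phi \colon \eta(G) \to H$ into a complex Lie group $H$, I precompose with the surjective corestriction $\eta \colon G \to \eta(G)$ and apply the universal property of $G_\C$ to obtain a unique holomorphic morphism $\Psi \colon G_\C \to H$ with $\Psi \circ \eta = \Phi \circ \eta$. Because $\iota \circ \eta = \eta$ and $\eta$ is onto $\eta(G)$, this forces $\Psi \circ \iota = \Phi$, so $\Phi_\C \coloneqq \Psi$ exists. Uniqueness holds since $\liealg{h}$ generates $\LieAlg(G_\C)$ over $\C$, giving $\liealg{h} + \I \liealg{h} = \LieAlg(G_\C)$: any two holomorphic morphisms agreeing on $\eta(G)$ have $\C$-linear differentials agreeing on $\liealg{h}$, hence on all of $\LieAlg(G_\C)$, and therefore coincide on the connected group $G_\C$.

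Finally, parts \ref{item:ImageOfEtaDimension} and \ref{item:ComplexificationLocallyInjective} are Lie-algebra bookkeeping read off from the real-form decomposition already obtained. Since $\liealg{n} = (\liealg{n} \cap \liealg{g})_\C$, the quotient splits as real vector spaces into $\LieAlg(G_\C) = \liealg{g}_\C / \liealg{n} = \liealg{h} \oplus \I \liealg{h}$, so $(\basis{e}_1, \ldots, \basis{e}_d, \I \basis{e}_1, \ldots, \I \basis{e}_d)$ is a real basis of $\LieAlg(G_\C)$ for any basis of $\liealg{h}$, and $\dim_\C(G_\C) = \dim_\R \liealg{h} = \dim_\R(\eta(G))$, which is \ref{item:ImageOfEtaDimension}. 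For \ref{item:ComplexificationLocallyInjective}, local injectivity of $\eta$ is equivalent to injectivity of $\LieAlg(\eta)$, i.e. to $\liealg{n} \cap \liealg{g} = 0$; as $\liealg{n} = (\liealg{n} \cap \liealg{g})_\C$, this forces $\liealg{n} = 0$ and hence $\LieAlg(G_\C) = \liealg{g}_\C$.
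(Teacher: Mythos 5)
Your proposal is correct, and it is more self-contained than the paper's proof while genuinely diverging in parts \ref{item:ImageOfEtaDimension} and \ref{item:ComplexificationLocallyInjective}. For \ref{item:ImageOfEtaSubmanifold}, the paper simply cites \cite[Theorem~15.1.4, (iv)]{hilgert.neeb:2012a} for the antiholomorphic involution $\sigma$ with $\eta(G) = \mathrm{Fix}(\sigma)_0$ and then only needs closedness plus the Closed Subgroup Theorem; you reconstruct this from scratch by integrating the conjugation on the simply connected $\Covering_\C$, checking $\sigma \circ \tilde{\eta} = \tilde{\eta}$, using minimality to get $\sigma$-stability of $\langle \tilde{\eta}(\pi_1(G))\rangle_\C$, descending to $G_\C$, and identifying $\eta(G)$ with the identity component of the fixed-point group --- your treatment of the delicate step, that the immersed group $\eta(G)$ exhausts that component, via equality of Lie algebras and the open-subgroup argument, is sound. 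Part \ref{item:ImageOfEtaComplexification} is essentially the paper's proof; you merely replace its appeal to the uniqueness clause of the universal property of $(G_\C, \iota \circ \eta)$ by a total-reality argument, $\liealg{h} + \I\liealg{h} = \LieAlg(G_\C)$, which also works but is slightly longer. For \ref{item:ImageOfEtaDimension} and \ref{item:ComplexificationLocallyInjective}, the paper translates the covering square to $H = \eta(G)$ and deduces injectivity of $T_\E(\pi \circ \tilde{\iota})$ from local injectivity of $\iota \circ p$, finishing \ref{item:ComplexificationLocallyInjective} with a dimension count; you instead read everything off the conjugation-stable splitting $\liealg{n} = (\liealg{n} \cap \liealg{g})_\C$, obtaining $\LieAlg(G_\C) = \liealg{h} \oplus \I\liealg{h}$ at once and \ref{item:ComplexificationLocallyInjective} directly from $\liealg{n} \cap \liealg{g} = 0$ forcing $\liealg{n} = 0$. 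Your route buys something concrete here: injectivity of the tangent map by itself only yields $\R$-linear independence of $\basis{e}_1, \ldots, \basis{e}_d$, and the upgrade to the doubled family $(\basis{e}_1, \ldots, \basis{e}_d, \I\basis{e}_1, \ldots, \I\basis{e}_d)$ being a basis tacitly requires exactly the conjugation-stability of the quotiented subalgebra that you prove explicitly, so your bookkeeping makes explicit what the paper's terse argument leaves implicit --- at the cost of redoing material the paper outsources to the literature.
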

\begin{proof}
    Invoking \cite[Theorem~15.1.4, (iv)]{hilgert.neeb:2012a}, we find a
    antiholomorphic involutive group morphism
    \begin{equation}
        \sigma
        \colon
        G_\field{C}
        \longrightarrow
        G_\field{C}
        \qquad \textrm{such that} \qquad
        \eta(G)
        =
        \bigl\{
        g
        \in
        G_\field{C}
        \colon
        \sigma(g)
        =
        g
        \bigr\}_0,
    \end{equation}
    where the subscript denotes passing to the connected component of the group
    unit. By continuity of $\sigma$, its fixed point group is closed within $G_\field{C}$.
    As connected components of closed sets are themselves closed, this establishes
    the closedness of $\eta(G)$ within $G_\field{C}$. As images of group
    morphisms are subgroups, this in turn implies that $\eta(G)$ is a real Lie group
    itself by the Closed Subgroup Theorem \cite[Thm.~9.3.7]{hilgert.neeb:2012a}. Here
    we view the ambient group~$G_\field{C}$ as a real Lie group. In particular,
    $\eta(G)$ constitutes a submanifold of $G_\C$ with exponential charts as
    submanifold charts. In order to prove \ref{item:ImageOfEtaComplexification}, we
    check the universal property directly. To this end, let $\Phi \colon \eta(G)
    \longrightarrow H$ be a Lie group morphism into a complex Lie group $H$. Then
    \begin{equation}
        \Psi
        \coloneqq
        \Phi
        \circ
        \eta
        \colon
        G
        \longrightarrow
        H
    \end{equation}
    is a Lie group morphism from $G$ into a complex Lie group. Invoking the universal
    property of~$(G_\C, \iota \circ \eta)$, there thus exists a unique holomorphic
    group morphism
    \begin{equation}
        \Psi_\C \colon G_\C \longrightarrow H
        \qquad
        \textrm{with}
        \qquad
        \Psi_\C
        \circ
        \iota
        \circ
        \eta
        =
        \Psi
        =
        \Phi
        \circ
        \eta.
    \end{equation}
    By surjectivity of $\eta \colon G \longrightarrow \eta(G)$, this is equivalent to
    $\Psi_\C \circ \iota = \Phi$. That is to say, $\Psi_\C$ is the unique holomorphic
    mapping complexifying $\Phi$.

    Next, we prove \ref{item:ImageOfEtaDimension}, where we write $H \coloneqq
    \eta(G)$ to declutter the notation.
    Translating~\eqref{eq:ComplexificationFromUniversalCoveringDiagram} to $H$
    yields the commutative square
     \begin{equation}
         \label{eq:ComplexificationImageOfEta}
         \begin{tikzcd}[column sep = huge]
             \Covering[H]
             \arrow[d, "p", swap]
             \arrow[r, "\tilde{\iota}"]
             &\Covering[H]_\C
             \arrow[d, "\pi"] \\
             H
             \arrow[r, "\iota"]
             &H_\C \cong G_\C
         \end{tikzcd}
     \end{equation}
     in view of \ref{item:ImageOfEtaComplexification}. Thus, the composition~$\pi
     \circ \tilde{\iota}$ is locally injective by local injectivity of $p$ and injectivity of
     $\iota$. Hence, the tangent map
         \begin{equation}
                 T_\E
                 (
                    \pi
                    \circ
                    \tilde{\iota}
                 )
                 \colon
                 \LieAlg(\Covering[H])
                 \longrightarrow
                 \LieAlg(G_\C)
             \end{equation}
          is an injective Lie algebra morphism. But $\LieAlg(\Covering[H]) = \LieAlg(H)$,
          completing the proof of~\ref{item:ImageOfEtaDimension}. Finally,
          the assumed local injectivity on the one hand implies injectivity of the tangent
          map
          \begin{equation}
              T_\E
              \eta
              \colon
              \liealg{g}
              \longrightarrow
              \LieAlg(H).
          \end{equation}
          In particular, $\dim \liealg{g} \le \dim \LieAlg(H)$. On the other hand,
          combining \eqref{eq:ComplexificationDimension} with
          \eqref{eq:ImageOfEtaDimension} we know that
          \begin{equation}
              \dim_\R(G)
              \ge
              \dim_\C(G_\C)
              =
              \dim_\R(H).
          \end{equation}
          Thus, $\dim \liealg{g} = \dim \LieAlg(H)$ and thus the linear mapping~$T_\E
          \eta$ constitutes a bijection.
          Hence,~\ref{item:ComplexificationLocallyInjective}
          follows from \ref{item:ImageOfEtaDimension}.
\end{proof}

Using similar techniques and being a bit more careful, one can establish that
\begin{equation}
    \pi
    \colon
    \Covering_\C
    \longrightarrow
    G_\C
\end{equation}
is a universal covering projection whenever $\tilde{\eta} \colon \Covering
\longrightarrow \Covering_\C$ is injective. That is to say, in view of
\eqref{eq:ComplexificationFromUniversalCoveringDiagram}, the passage to the
universal covering group commutes with complexification. In particular,
\begin{equation}
    \pi_1(G_\C)
    \cong
    \pi_1(G)
\end{equation}
in this case. Dropping the assumption of injectivity, this equality may reduce to
the inclusion
\begin{equation}
    \pi_1(G_\C)
    \subseteq
    \pi_1(G).
\end{equation}
Either way, the upshot is that there are no new topological obstructions to analytic
continuation to $G_\C$ beyond what is already present within $G$. While this
provides heuristical motivation for Theorem~\ref{thm:Extension}, our proof only
requires the following left-invariant version of the identity principle.
\begin{corollary}[Identity Principle]
    \label{cor:IdentityPrinciple}%
    Let $G$ be a connected Lie group, $U \subseteq G_\C$ a domain with $U \cap
    \eta(G) \neq \emptyset$ and $\phi \in \Holomorphic(U,M)$ be a holomorphic map
    with values in a complex manifold $M$. If $\phi \at{U \cap \eta(G)}$ is constant,
    then so is $\phi$.
\end{corollary}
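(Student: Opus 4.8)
The plan is to exploit the fact, furnished by Lemma~\ref{lem:ImageOfEta}, that $\eta(G)$ sits inside $G_\C$ as a \emph{maximally totally real} submanifold: in suitable holomorphic coordinates it looks exactly like the real points $\R^d$ inside $\C^d$. Once this is set up, the statement decouples into a local argument, where the classical identity theorem of several complex variables applies along the totally real slice, and a global argument, where connectedness of $U$ propagates local constancy. Denote by $c$ the constant value of $\phi \at{U \cap \eta(G)}$.

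First I would establish the coordinate picture around an arbitrary point $p \in U \cap \eta(G)$. Fix a basis $(\basis{e}_1, \ldots, \basis{e}_d)$ of $\LieAlg(\eta(G))$; by Lemma~\ref{lem:ImageOfEta}~\ref{item:ImageOfEtaDimension} the tuple $(\basis{e}_1, \ldots, \basis{e}_d, \I\basis{e}_1, \ldots, \I\basis{e}_d)$ is a real basis of $\LieAlg(G_\C)$, so that $(\basis{e}_1, \ldots, \basis{e}_d)$ is a $\C$-basis of $\LieAlg(G_\C)$. Since the complex exponential map of $G_\C$ is holomorphic with differential the identity at the origin and left translation by $p$ is biholomorphic, the map
\begin{equation}
    \Psi \colon \underline{z} \longmapsto p \cdot \exp(z^k \basis{e}_k)
\end{equation}
is a holomorphic chart of $G_\C$ around $p$. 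Writing $z^k = x^k + \I y^k$, the real coordinates are $(\underline{x},\underline{y})$, and because $p \in \eta(G)$ with $\eta(G)$ a subgroup whose submanifold charts are exponential charts (Lemma~\ref{lem:ImageOfEta}~\ref{item:ImageOfEtaSubmanifold}), the preimage $\Psi^{-1}(\eta(G))$ coincides near the origin with the real slice $\{\underline{y}=0\}\cong\R^d$. Thus in these coordinates $U \cap \eta(G)$ contains a neighbourhood of $0$ in $\R^d$.

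Next I would transport $\phi$ into these coordinates. As $\phi(p)=c$, choosing a holomorphic chart of $M$ about $c$ and shrinking the domain by continuity turns $\phi$ into a holomorphic map $f \coloneqq \phi \circ \Psi$ from a polydisk around $0$ into some $\C^m$ that is constant on the real points. Now the restriction $f|_{\R^d}$ is obtained from the holomorphic power series of $f$ by setting $\underline{z}=\underline{x}$, so it equals the real Taylor expansion of $f|_{\R^d}$; since the latter is constant, uniqueness of Taylor coefficients forces every coefficient beyond order zero to vanish. Hence $f \equiv c$ and $\phi$ is constant near $p$. Consequently $\phi \equiv c$ on a nonempty open subset of $U$.

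Finally I would globalize by the standard identity principle on the connected complex manifold $U$: the set of points admitting a neighbourhood on which $\phi \equiv c$ is open by definition, nonempty by the previous step, and closed, since at a boundary point $\phi$ still takes the value $c$ by continuity and then agrees with the constant $c$ on a connected chart by the several-variables identity theorem. Connectedness of $U$ yields $\phi \equiv c$. The step deserving the most care—and the genuine mathematical content—is the local analysis: one must verify that the totally real structure of Lemma~\ref{lem:ImageOfEta} identifies $\eta(G)$ with $\R^d$ in a \emph{holomorphic} chart, so that mere constancy along the real directions upgrades to the vanishing of all holomorphic derivatives. The propagation across $U$ is then routine.
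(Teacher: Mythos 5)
Your proposal is correct and follows essentially the same route as the paper: both rest on Lemma~\ref{lem:ImageOfEta}~\ref{item:ImageOfEtaDimension} to identify $\eta(G)$ as a maximally totally real slice in exponential coordinates, upgrade constancy along the real directions to local constancy via holomorphicity, and then invoke the classical identity principle on the connected domain $U$. The only difference is packaging: the paper phrases the local step intrinsically through the left-invariant Cauchy--Riemann equations \eqref{eq:CauchyRiemann} and the Lie-Taylor formula \eqref{eq:LieTaylor}, whereas you perform the equivalent computation with the Taylor series of $\phi \circ \Psi$ in a single holomorphic exponential chart.
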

\begin{proof}
    Let $(\basis{e}_1, \ldots, \basis{e}_d)$ be a basis of $\LieAlg(\eta(G))$ with the
    induced basis
    \begin{equation}
        (
            \basis{e}_1,
            \ldots,
            \basis{e}_d,
            \I \basis{e}_1,
            \ldots,
            \I \basis{e}_d
         )
    \end{equation}
    of $\LieAlg(G_\C)$ from Lemma~\ref{lem:ImageOfEta},
    \ref{item:ImageOfEtaDimension}. By holomorphicity of $\phi$, we get the
    left-invariant Cauchy-Riemann equations
    \begin{equation}
        \label{eq:CauchyRiemann}
        \Lie(\I \basis{e}_n) \phi
        =
        \I \cdot \Lie(\basis{e}_n) \phi
        \qquad
        \textrm{for }
        n=1,\ldots,d.
    \end{equation}
    Indeed, as the Lie exponential is the flow of the left-invariant vector field, we have
    \begin{equation}
        \label{eq:LieDerivativeFlow}
        \Lie(\basis{e}_n)
        \phi
        \at[\Big]{g}
        =
        \frac{\D}{\D t}
        \phi
        \bigl(
            g \exp(t \basis{e}_n)
        \bigr)
        \at[\Big]{t=0}
    \end{equation}
    for all $g \in U$ and $n=1,\ldots,d$. This reveals \eqref{eq:CauchyRiemann} as
    the Cauchy-Riemann equations for the holomorphic mapping
    \begin{equation}
        \phi
        \circ
        \exp
        \colon
        \LieAlg(G_\C)
        =
        \LieAlg
        \bigl(
            \eta(G)
        \bigr)_\C
        \longrightarrow
        \C.
    \end{equation}
    Now, by assumption, $\Lie(e_n) \phi \at{U \cap \eta(G)} \equiv 0$ and
    \eqref{eq:CauchyRiemann} implies
    \begin{equation}
        \Lie(\I \basis{e}_n)\phi
        \at[\Big]{U \cap \eta(G)}
        \equiv 0
        \qquad
        \textrm{for }
        n=1,\ldots,d.
    \end{equation}
    Invoking the Lie-Taylor formula \eqref{eq:LieTaylor}, this means that $\phi$ is
    constant in an open neighbourhood of $U \cap \eta(G)$. By the usual identity
    principle, this completes the proof.
\end{proof}

Having established these geometric preliminaries, we may now reduce the extension
problem to an honest inquiry about analytic continuation. Here, we once again make
the additional assumption of local injectivity of $\eta$. By what we have shown in
Lemma~\ref{lem:ImageOfEta}, we may extend its tangent map to a $\C$-linear
isomorphism
\begin{equation}
    \label{eq:TangentMapExtension}
    (T_\E \eta)^{-1}
    \colon
    \LieAlg(G_\C)
    \longrightarrow
    \liealg{g}_\C.
\end{equation}
This facilitates the following construction of a local extension.
\begin{proposition}[Holomorphic Shadows]
    \label{lem:HolomorphicShadows}%
    Let $G$ be a connected Lie group such that
    \begin{equation}
        \eta
        \colon
        G
        \longrightarrow
        G_\C
    \end{equation}
    is locally injective and $\phi \in \Entire(G)$ an entire function. Then there exist an
    open zero neighbourhood $V \subseteq \LieAlg(G_\C)$ and a holomorphic
    mapping
    \begin{equation}
        \phi_0
        \colon
        U
        \coloneqq
        \exp_{G_\C} V
        \longrightarrow
        \C
    \end{equation}
    such that
    \begin{equation}
        \label{eq:HolomorphicShadowPullback}
        \eta^*
        \phi_0
        \at[\Big]
        {
            \eta^{-1}(U)
        }
        =
        \phi
        \at[\Big]{\eta^{-1}(U)}
    \end{equation}
    and
    \begin{equation}
        \label{eq:HolomorphicShadow}
        \phi_0
        \bigl(
            \exp_{G_\C} \xi
        \bigr)
        \coloneqq
        \Taylor_\phi
        \bigl(
            \E, \underline{(T_\E \eta)^{-1} \xi}
        \bigr)
        \qquad
        \textrm{for all }
        \xi \in V.
    \end{equation}
    In particular,
    \begin{equation}
        \label{eq:HolomorphicShadowCoefficients}%
        \Lie(\xi)\phi_0
        \at[\Big]{\eta(g)}
        =
        \Lie(\xi)\phi
        \at[\Big]{g}
        \qquad
        \textrm{for all }
        g \in \eta^{-1}(U),
        \xi \in \Universal^\bullet(\liealg{g}_\C).
    \end{equation}
\end{proposition}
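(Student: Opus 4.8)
The plan is to treat the formula \eqref{eq:HolomorphicShadow} as the genuine definition of $\phi_0$ and to verify afterwards that it is holomorphic and satisfies the pullback relation \eqref{eq:HolomorphicShadowPullback}. The starting observation is that \emph{entirety} of $\phi$ already forces the $d$-variable Lie–Taylor series at the unit to converge globally, where $d\coloneqq\dim_\R G$. Indeed, for every $\underline{z}\in\C^d$ the triangle inequality together with $\abs{\underline{z}^\alpha}\le\supnorm{\underline{z}}^{\,n}$ for $\alpha\in\N_d^n$ gives
\[
    \sum_{n=0}^\infty\frac{1}{n!}\sum_{\alpha\in\N_d^n}\abs[\big]{(\Lie(\alpha)\phi)(\E)}\cdot\abs[\big]{\underline{z}^\alpha}
    \le
    \sum_{n=0}^\infty c_n(\phi,\E)\cdot\supnorm{\underline{z}}^{\,n}
    =
    \Majorant_\phi\bigl(\E,\supnorm{\underline{z}}\bigr)<\infty
\]
by Definition~\ref{def:EntireFunction} and \eqref{eq:LieTaylorMajorant}. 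Hence $\underline{z}\mapsto\Taylor_\phi(\E,\underline{z})$ converges absolutely and locally uniformly on all of $\C^d$ and defines an entire function $F\in\Holomorphic(\C^d)$. This is the only point where entirety, as opposed to mere real-analyticity, enters.

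Next I would build $\phi_0$ as a composition. Since $\eta$ is locally injective, Lemma~\ref{lem:ImageOfEta}\,\ref{item:ComplexificationLocallyInjective} supplies the $\C$-linear isomorphism \eqref{eq:TangentMapExtension}, and I choose a zero neighbourhood $V\subseteq\LieAlg(G_\C)$ so small that $\exp_{G_\C}\colon V\to U$ is a biholomorphism. Then $\phi_0\coloneqq F\circ\bigl(\xi\mapsto\underline{(T_\E\eta)^{-1}\xi}\bigr)\circ(\exp_{G_\C}|_V)^{-1}$ is holomorphic on $U$ as a composition of the entire $F$ with a $\C$-linear map and the inverse of a local biholomorphism, and it satisfies \eqref{eq:HolomorphicShadow} by construction.

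For the pullback relation I would use naturality of the exponential, $\eta\circ\exp_G=\exp_{G_\C}\circ\,T_\E\eta$, together with the fact that, after shrinking $V$, the real points $U\cap\eta(G)$ are exactly $\exp_{G_\C}\bigl(V\cap T_\E\eta(\liealg{g})\bigr)$ via the antiholomorphic involution $\sigma$ from the proof of Lemma~\ref{lem:ImageOfEta}. Given $g$ in the connected component of $\E$ in $\eta^{-1}(U)$, local injectivity lets me write $g=\exp_G\zeta$ with $\zeta\in\liealg{g}$ small and $\eta(g)=\exp_{G_\C}(T_\E\eta\,\zeta)$; then $(T_\E\eta)^{-1}(T_\E\eta\,\zeta)=\zeta$ is real and the Lie–Taylor formula \eqref{eq:LieTaylor} at the unit yields $\phi(g)=\Taylor_\phi(\E,\underline{\zeta})=\phi_0(\eta(g))$. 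The derivative identity \eqref{eq:HolomorphicShadowCoefficients} then follows by differentiating this equality inside $\Universal^\bullet(\liealg{g}_\C)$: for a real direction $\zeta\in\liealg{g}$ the left-invariant vector fields on $G$ and on $G_\C$ are $\eta$-related, so that $\Lie(\zeta)(\phi_0\circ\eta)=(\Lie(\zeta)\phi_0)\circ\eta$, whereas for the imaginary direction $\I\zeta$ I invoke the left-invariant Cauchy–Riemann equations \eqref{eq:CauchyRiemann} for the holomorphic $\phi_0$.

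The delicate point I would treat most carefully is the bookkeeping forced by the passage through $(T_\E\eta)^{-1}$. The operators $\Lie(\xi)$ with $\xi\in\Universal^\bullet(\liealg{g}_\C)$ act on $\Cinfty(G)$ by $\C$-linear extension of the real action, while on $G_\C$ the same $\xi$ is a genuine left-invariant operator assembled from \emph{real} directions in $\LieAlg(G_\C)$; it is precisely the left-invariant Cauchy–Riemann equations \eqref{eq:CauchyRiemann}, read through the isomorphism $\LieAlg(G_\C)\cong\liealg{g}_\C$ of Lemma~\ref{lem:ImageOfEta}\,\ref{item:ImageOfEtaDimension}, that reconcile these two actions and that make local injectivity indispensable. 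A second subtlety is that $\eta^{-1}(U)$ need not be connected once $\ker\eta\neq\{\E\}$: the argument above establishes the identities on the component of $\E$, on which $\eta$ restricts to a diffeomorphism onto $U\cap\eta(G)$, and this germ at the unit is all that the subsequent monodromy continuation requires. The compatibility across the remaining sheets reduces to a $\ker\eta$-periodicity of $\phi$, which is exactly the phenomenon postponed to the identity-principle step in the proof of Theorem~\ref{thm:Extension}.
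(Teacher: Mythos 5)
Your proposal is correct and follows essentially the same route as the paper: define $\phi_0$ by the explicit formula \eqref{eq:HolomorphicShadow} read as a coordinate representation in the chart $\exp_{G_\C}$, with holomorphicity resting on the $\C$-linearity of \eqref{eq:TangentMapExtension}; verify the pullback relation via the naturality $\eta \circ \exp_G = \exp_{G_\C} \circ T_\E\eta$ of \eqref{eq:LieCorrespondence} together with the Lie-Taylor formula \eqref{eq:LieTaylor}; and obtain \eqref{eq:HolomorphicShadowCoefficients} for real directions from \eqref{eq:LieDerivativeFlow} and $\eta$-relatedness, extending to $\Universal^\bullet(\liealg{g}_\C)$ by the left-invariant Cauchy--Riemann equations \eqref{eq:CauchyRiemann}. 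Two places where you are more explicit than the paper are worth recording. First, you spell out why the chart representation converges globally, via the majorant bound
\begin{equation}
    \abs[\big]{\Taylor_\phi(\E,\underline{z})}
    \le
    \Majorant_\phi
    \bigl(
        \E, \supnorm{\underline{z}}
    \bigr),
\end{equation}
which the paper leaves implicit in the entirety of $\Majorant_\phi(\E,\argument)$. Second, and more substantively, you prove the identity \eqref{eq:HolomorphicShadowPullback} only on the connected component of $\E$ in $\eta^{-1}(U)$, observing that its validity on the remaining sheets amounts to a $\ker\eta$-periodicity of $\phi$ that is only available a posteriori. This is a genuine subtlety which the paper's own chain of equalities elides: the final step there identifies $g$ with $\exp_G\bigl((T_\E\eta)^{-1}\xi\bigr)$, whereas commutativity of \eqref{eq:LieCorrespondence} only yields that both have the same image under $\eta$; when $\ker\eta \neq \{\E\}$, as for $\widetilde{\liegroup{SL}}_2(\R)$ in Example~\ref{ex:AutomaticPeriodicity}, they may differ by a central element of the kernel. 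As you correctly note, only the germ at the unit feeds into the monodromy continuation, and the real-analytic identity principle on the connected group then upgrades the local identity to $\Phi \circ \eta = \phi$ globally in the proof of Theorem~\ref{thm:Extension}, which retroactively supplies the periodicity; so your weaker local form of \eqref{eq:HolomorphicShadowPullback} is exactly what the downstream argument consumes, and your treatment is, if anything, more careful than the paper's.
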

\begin{proof}
    The idea is to define $\phi_0$ by means of the explicit formula
    \eqref{eq:HolomorphicShadow}. To this end, we choose $V \subseteq
    \LieAlg(G_\C)$ as an open neighbourhood of zero, on
    which the Lie exponential~$\exp_{G_\C}$ is injective. Hence, every $g \in
    U \coloneqq \exp_{G_\C} V$ is of the form $g = \exp_{G_\C} \xi$ for a unique~$\xi
    \in V$. Furthermore, by assumption, $\eta$ is locally injective and thus its tangent
    map is indeed invertible. Consequently, \eqref{eq:HolomorphicShadow} defines a
    function on $\exp_{G_\C} V$. Reading~\eqref{eq:HolomorphicShadow} as a
    coordinate representation within the holomorphic chart $\exp_{G_\C}$
    of $G_\C$ moreover establishes the holomorphicity $\phi_0$, where the
    $\C$-linearity of \eqref{eq:TangentMapExtension} is crucial. If now $g \in
    \eta^{-1}(U)$, then there exists a unique $\xi \in V$ with $\eta(g) = \exp_{G_\C}
    \xi$. Consequently,
    \begin{equation}
        \eta^*
        \phi_0
        \at[\Big]{g}
        =
        \phi_0
        \bigl(
            \exp_{G_\C} \xi
        \bigr)
        =
        \Taylor_\phi
        \bigl(
            \E,
            \underline{
                (T_\E \eta)^{-1}
                \xi
            }
        \bigr)
        =
        \phi
        \Bigl(
            \exp_G
            \bigl(
                (T_\E \eta)^{-1}
                \xi
            \bigr)
        \Bigr)
        =
        \phi(g)
    \end{equation}
    by virtue of the Lie-Taylor series from Theorem~\ref{thm:LieTaylor} and the
    commutativity of the diagram
     \begin{equation}
         \label{eq:LieCorrespondence}
        \begin{tikzcd}[column sep = huge]
            G
            \arrow[r, "\eta"]
            &G_\C \\
            \liealg{g}
            \arrow[u, "\exp_G"]
            \arrow[r, "T_\E \eta"]
            &\LieAlg(G_\C)
            \arrow[u, "\exp_{G_\C}"]
        \end{tikzcd}.
    \end{equation}
    Varying $g \in \eta^{-1}(U)$ establishes \eqref{eq:HolomorphicShadowPullback}.
    Finally, taking another look at \eqref{eq:LieDerivativeFlow}, the identity~
    \eqref{eq:HolomorphicShadowCoefficients} for the Lie-Taylor coefficients follows
    from the Lie correspondence \eqref{eq:LieCorrespondence} for any $\xi \in
    \Universal^\bullet(\liealg{g})$. By the left-invariant Cauchy-Riemann equations
    \eqref{eq:CauchyRiemann}, \eqref{eq:HolomorphicShadowCoefficients} then
    extends to arbitary
    $\xi \in \Universal^\bullet(\liealg{g}_\C)$.
\end{proof}

In the sequel, we call any $\phi_0 \in \Holomorphic(U)$ defined on an open domain
$U \subseteq G_\C$ containing the group unit a \emph{holomorphic shadow} of
$\phi$. The left-invariant identity principle from Corollary~\ref{cor:IdentityPrinciple}
implies that any two holomorphic shadows
\begin{equation}
    \phi_0
    \colon
    U
    \longrightarrow
    \C
    \qquad \textrm{and} \qquad
    \widehat{\phi}_0
    \colon
    \widehat{U}
    \longrightarrow
    \C
\end{equation}
of $\phi$ coincide on $(U \cap \widehat{U})_0$. By slight abuse of language, we thus
speak of \emph{the} holomorphic shadow $\phi_0$ of $\phi$ in the sequel. In
particular, we may always use the explicit formulas~\eqref{eq:HolomorphicShadow}
and~\eqref{eq:HolomorphicShadowCoefficients}. Note that, by
Proposition~\ref{prop:UniversalComplexification},
\ref{item:ComplexificationCovering} the additional assumption of local injectivity is
always fulfilled when working with the universal covering group. That is to say, every
entire function~$\psi \in \Entire(\Covering)$ casts a holomorphic shadow.

We proceed with proving that the entirety of the Lie-Taylor majorant of a
holomorphic function at one point already implies the entirety of its Lie-Taylor
majorant at all points of its domain. This will be instrumental for the analytic
continuation of the holomorphic shadow by means of successive applications of the
Lie-Taylor formula \eqref{eq:LieTaylor}.
\begin{lemma}
    \label{lem:MajorantTranslation}%
    Let $G$ be a connected Lie group, $U \subseteq G$ a domain and $\phi
    \in \Comega(U)$ a real-analytic such that the Lie-Taylor majorant
    $\Majorant_\phi(g_0,\argument)$ is entire for some $g_0 \in U$. Then the
    Lie-Taylor majorant $\Majorant(g,\argument)$ is entire for all $g \in U$.
\end{lemma}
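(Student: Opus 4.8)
The plan is to show that the property of possessing an entire Lie-Taylor majorant is \emph{locally constant} on $U$; since $U$ is connected and $g_0$ already witnesses that the set $A \coloneqq \{g \in U : \Majorant_\phi(g,\argument) \in \Holomorphic(\C)\}$ is non-empty, this forces $A = U$. The heart of the matter is a comparison estimate for the majorant coefficients under a left translation $g = g_0 \exp(\eta)$ with $\eta = \eta^k \basis{e}_k$. Writing $t \coloneqq \supnorm{\underline{\eta}}$ and $F \coloneqq \Majorant_\phi(g_0,\argument)$, I would expand each Lie derivative $\Lie(\alpha)\phi$ around $g_0$ by means of the Lie-Taylor formula of Theorem~\ref{thm:LieTaylor}, namely
\begin{equation}
    \bigl(\Lie(\alpha)\phi\bigr)(g)
    =
    \sum_{m=0}^{\infty}
    \frac{1}{m!}
    \sum_{\beta \in \N_d^m}
    \bigl(\Lie(\beta \ast \alpha)\phi\bigr)(g_0)\,
    \underline{\eta}^\beta,
\end{equation}
where $\beta \ast \alpha \in \N_d^{m+n}$ denotes concatenation and I use that $\Lie$ is an algebra morphism, so $\Lie(\beta)\Lie(\alpha) = \Lie(\beta \ast \alpha)$. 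Taking absolute values, bounding $\abs{\underline{\eta}^\beta} \le t^m$, summing over $\alpha \in \N_d^n$ and dividing by $n!$, the fact that $\N_d^m \times \N_d^n \to \N_d^{m+n}$ is a bijection via concatenation yields
\begin{equation}
    c_n(\phi,g)
    \le
    \sum_{m=0}^{\infty}
    \binom{m+n}{n}\,
    c_{m+n}(\phi,g_0)\,
    t^m
    =
    \frac{1}{n!}\,
    F^{(n)}(t).
\end{equation}
Since the $c_k(\phi,g_0)$ are non-negative and $t \ge 0$, summing against $r^n$ gives $\Majorant_\phi(g,r) \le F(t+r)$. As $F$ is entire, the right-hand side is finite for every $r \ge 0$, so $\Majorant_\phi(g,\argument)$ is entire as well.

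The main obstacle is the \emph{uniformity in $\alpha$} of the radius on which the first expansion is valid: Theorem~\ref{thm:LieTaylor} a priori only provides a radius $R_{g_0}(\Lie(\alpha)\phi)$ depending on the order of $\alpha$, and I must exhibit a single $\rho > 0$ serving all $\alpha$ at once. To this end I would pass to the exponential chart at $g_0$ and express the left-invariant vector fields $X_{\basis{e}_k}$ as first-order differential operators $D_k = a_k^j \partial_j$, whose coefficients $a_k^j$ are, by left-invariance, independent of the base point and real-analytic near $0$ with a fixed radius $\rho' > 0$ determined by $G$ and the basis alone. Writing $F_0 \coloneqq \phi \circ \ell_{g_0} \circ \exp$, which represents its Taylor series on some ball $\Ball_\rho(0)$, the function $\Lie(\alpha)\phi \circ \ell_{g_0} \circ \exp$ equals $D_{\alpha_1}\cdots D_{\alpha_n} F_0$. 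Because passing to a partial derivative does not decrease the radius of convergence of a power series, and multiplication by a fixed analytic coefficient does not decrease it below $\rho'$, an induction on $n$ shows that the Taylor series of $D_{\alpha_1}\cdots D_{\alpha_n} F_0$ at $0$ converges on $\Ball_{\rho_1}(0)$ with $\rho_1 \coloneqq \min(\rho,\rho') > 0$ \emph{independent of} $\alpha$; on this ball it represents $\Lie(\alpha)\phi$, which is exactly the uniform validity used above.

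Finally, the estimate is symmetric under exchanging the base point: for $g = g_0 \exp(\eta)$ with $\supnorm{\underline{\eta}} < \min\bigl(\rho_1(g_0),\rho_1(g)\bigr)$ one obtains both $\Majorant_\phi(g,r) \le \Majorant_\phi(g_0,\supnorm{\underline{\eta}}+r)$ and, applying the same bound with base point $g$ and target $g_0 = g\exp(-\eta)$, the reverse inequality $\Majorant_\phi(g_0,r) \le \Majorant_\phi(g,\supnorm{\underline{\eta}}+r)$. Thus entirety of the majorant at $g_0$ and at $g$ are equivalent. Since only the analyticity radius $\rho(g_0)$ of $\phi \circ \ell_{g_0} \circ \exp$ varies with the base point while $\rho'$ is universal, real-analyticity of $\phi$ together with compactness bounds $\rho_1$ below on a neighbourhood of any point, so the equivalence holds locally uniformly. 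Consequently $A$ and $U \setminus A$ are both open, and connectedness of $U$ with $g_0 \in A$ yields $A = U$, which is the assertion.
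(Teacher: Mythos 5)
Your proposal is correct and essentially reproduces the paper's own argument: the central translation estimate $\Majorant_\phi(g_0\exp\eta,r) \le \Majorant_\phi\bigl(g_0, r+\supnorm{\eta}\bigr)$, obtained by re-expanding each $\Lie(\alpha)\phi$ at $g_0$ via the Lie-Taylor formula, concatenating multi-indices, and recognizing the derivative series $F^{(n)}(t)/n!$ of the entire majorant, is exactly the computation in the paper's proof. The remaining differences are minor bookkeeping — you globalize by an open-closed connectedness argument on $U$ using the reverse inequality, where the paper instead chains exponentials $g_0^{-1}g = \exp(\xi_1)\cdots\exp(\xi_n)$ and inducts, and you spell out the uniformity in $\alpha$ of the expansion radius (via the fixed-radius analytic coefficients of the left-invariant fields in exponential coordinates), a point the paper merely asserts.
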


The analogous statement for $\phi \in \Entire(G)$ was already established in
\cite[Theorem~4.17,~\textit{iii.)}]{heins.roth.waldmann:2023a} and our methods are
essentially identical. Working with globally defined functions, one may recast
Lemma~\ref{lem:MajorantTranslation} as an action of $G$ on $\Entire(G)$ by means
of translations.
\begin{proof}[Of Lemma~\ref{lem:MajorantTranslation}]
    Fix a basis $(\basis{e}_1, \ldots, \basis{e}_d)$ of the Lie algebra~$\liealg{g}$ with
    corresponding supremum norm~$\supnorm{\argument}$. As a first step, we
    establish the entirety of $\Majorant_\phi(g_0 \exp \xi, \argument)$ for sufficiently
    small $\xi \in \liealg{g}$. Recall that the Lie-Taylor formula \eqref{eq:LieTaylor}
    arises from the usual Taylor formula within an exponential chart, see again the
    proof of Proposition~\ref{prop:LieTaylorOneVariable}. In particular, this
    means that there exists some open neighbourhood $U_0 \subseteq G$ of $g_0$
    such that
    \begin{equation}
        \bigl(
            \Lie(\chi)
            \phi
        \bigr)(g_0 \exp \xi)
        =
        \sum_{k=0}^\infty
        \frac{1}{k!}
        \sum_{m_1, \ldots, m_k=1}^d
        \bigl(
            \Lie
            (
                \basis{e}_{m_1} \cdots \basis{e}_{m_k}
                \chi
            )
            \phi
        \bigr)(g_0)
        \cdot
        \xi^{m_1}
        \cdots
        \xi^{m_k}
    \end{equation}
     for all $\xi \in U_0$ and $\chi \in \Universal^\bullet(\liealg{g}_\C)$. Unwrapping
     the definition of the Lie-Taylor majorant thus leads to the estimate
    \begin{align}
        \Majorant_{\phi}
        \bigl(
            g_0 \exp \xi,
            \abs{z}
         \bigr)
        &=
        \sum_{n=0}^{\infty}
        \frac{\abs{z}^n}{n!}
        \sum_{\ell_1, \ldots, \ell_n = 1}^d
        \abs[\Big]
        {
            \bigl(
                \Lie(\basis{e}_{\ell_1} \cdots \basis{e}_{\ell_n})
                \phi
            \bigr)(g_0 \exp \xi )
        } \\
        &\le
        \sum_{n,k=0}^{\infty}
        \frac{\abs{z}^n}{n! \cdot k!}
        \sum_{\ell_1, \ldots, \ell_n = 1}^d
        \sum_{m_1, \ldots, m_k = 1}^d
        \supnorm{\xi}^k
        \cdot
        \abs[\Big]
        {
            \bigl(
            \Lie
            (
                \basis{e}_{m_1} \cdots \basis{e}_{m_k}
                \basis{e}_{\ell_1} \cdots \basis{e}_{\ell_n}
            )
            \phi
            \bigr)(g_0)
        } \\
        &=
        \sum_{n,k=0}^{\infty}
        \frac{\abs{z}^n \cdot \supnorm{\xi}^k}{n! \cdot k!}
        \sum_{\ell_1, \ldots, \ell_{n+k} = 1}^d
        \abs[\Big]
        {
            \bigl(
            \Lie
            (
            \basis{e}_{\ell_1} \cdots \basis{e}_{\ell_{n+k}}
            )
            \phi
            \bigr)(g_0)
        } \\
        &=
        \sum_{k=0}^{\infty}
        \frac{\supnorm{\xi}^k}{k!}
        \cdot
        \sum_{n=0}^\infty
        \frac{(n+k)!}{n!}
        \cdot
        c_{n+k}(\phi,g_0)
        \cdot
        \abs{z}^n \\
        &=
        \sum_{k=0}^{\infty}
        \frac{\supnorm{\xi}^k}{k!}
        \cdot
        \Majorant_\phi^{(k)}
        \bigl(
            g_0, \abs{z}
        \bigr) \\
        &=
        \Majorant_\phi
        \bigl(
            g_0, \abs{z} + \supnorm{\xi}
        \bigr)
    \end{align}
    for all $\xi \in U_0$ and $z \in \C$, where we have used the power series
    expansions of the entire function $\Majorant_\phi(g_0,\argument) \in
    \Holomorphic(\C)$ and its derivatives $\Majorant_\phi^{(k)}(g_0,\argument)$. We
    have thus established that the Taylor series of $\Majorant_\phi(g_0 \exp \xi)$
    converges absolutely and locally uniformly on $\C$. That is to say, it constitutes an
    entire function itself. Let now $g \in U$. As $U_0$ is a zero neighbourhood, its
    exponential image generates all of the connected Lie group $G$. Thus, we find Lie
    algebra elements $\xi_1,\ldots,\xi_n \in U_0$ with
    \begin{equation}
        g_0^{-1}
        g
        =
        \exp(\xi_1) \cdots \exp(\xi_n).
    \end{equation}
    That is to say, $g = g_0 \exp(\xi_1) \cdots \exp(\xi_n)$. Inductive application of
    what we have shown thus yields the entirety first of $\Majorant_\phi(g_0 \exp
    \xi_1)$, then of $\Majorant_\phi(g_0 \exp \xi_1 \exp \xi_2)$ and so on, completing
    the proof.
\end{proof}

The inequality we have just derived may be interpreted as a continuity estimate in
the following manner.
\begin{corollary}
    \label{cor:MajorantsAsSeminorms}%
    Let $G$ be a connected Lie group. Then
    \begin{equation}
        \Entire(G)
        \ni
        \phi
        \quad \mapsto \quad
        \Majorant_\phi(g,r)
        \in
        [0,\infty)
    \end{equation}
    is a continuous seminorm for all $g \in G$ and $r \ge 0$.
\end{corollary}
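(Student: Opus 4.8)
The statement asks for two things: that $\phi \mapsto \Majorant_\phi(g,r)$ is a seminorm on $\Entire(G)$, and that it is continuous for the topology generated by the seminorms $\seminorm{q}_s = \Majorant_{\argument}(\E,s)$ from Definition~\ref{def:EntireFunction}. My plan is to reduce both aspects to left translation. Since the operators $\Lie(\alpha)$ are left-invariant, one has $\bigl(\Lie(\alpha)(\ell_g^*\phi)\bigr)(\E) = \bigl(\Lie(\alpha)\phi\bigr)(g)$ for the pullback $\ell_g^*\phi = \phi\circ\ell_g$, whence $c_n(\ell_g^*\phi,\E)=c_n(\phi,g)$ for all $n$ and therefore
\begin{equation}
    \Majorant_\phi(g,r)
    =
    \Majorant_{\ell_g^*\phi}(\E,r)
    =
    \seminorm{q}_r(\ell_g^*\phi).
\end{equation}
Applying Lemma~\ref{lem:MajorantTranslation} with $g_0 = \E$ and $U = G$ shows that this quantity is finite for every $\phi \in \Entire(G)$, so the expression is well defined. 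As $\phi\mapsto\ell_g^*\phi$ is linear and $\seminorm{q}_r$ is a seminorm, homogeneity and subadditivity of $\phi\mapsto\Majorant_\phi(g,r)$ follow at once; nonnegativity is clear from the definition of the coefficients $c_n(\phi,g)$.

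For continuity it then suffices, by the standard characterization of continuous seminorms on a locally convex space together with the fact that the family $\{\seminorm{q}_s\}_{s\ge 0}$ is increasing in $s$, to produce a single index $s\ge0$ with $\Majorant_\phi(g,r)\le\seminorm{q}_s(\phi)$ for all $\phi\in\Entire(G)$. Here I would use the inequality
\begin{equation}
    \Majorant_\phi\bigl(g_0\exp\xi,\, s\bigr)
    \le
    \Majorant_\phi\bigl(g_0,\, s+\supnorm{\xi}\bigr)
\end{equation}
derived in the proof of Lemma~\ref{lem:MajorantTranslation}. Writing the fixed element $g$ as a product $g=\exp\xi_1\cdots\exp\xi_n$ of exponentials, which is possible because $G$ is connected, and setting $p_j\coloneqq\exp\xi_1\cdots\exp\xi_j$ with $p_0 = \E$ and $p_n = g$, successive application of this inequality to $p_j = p_{j-1}\exp\xi_j$ yields
\begin{equation}
    \Majorant_\phi(g,r)
    \le
    \Majorant_\phi\Bigl(\E,\; r + \sum_{j=1}^{n}\supnorm{\xi_j}\Bigr)
    =
    \seminorm{q}_s(\phi),
    \qquad
    s \coloneqq r + \sum_{j=1}^{n}\supnorm{\xi_j}.
\end{equation}
As $g$, $r$ and the chosen decomposition are fixed, $s$ is a constant, and continuity follows with constant $1$.

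The hard part will be ensuring that $s$ is genuinely independent of $\phi$. The inequality above was only established for $\xi$ in a neighbourhood $U_0$ whose size is controlled by the analyticity radius of $\phi$ at $g_0$, so a fixed decomposition of $g$ need not be admissible simultaneously for all $\phi\in\Entire(G)$. The crucial observation resolving this is that, for entire $\phi$, the inequality in fact holds for every $\xi\in\liealg{g}$. Indeed, entirety of $\Majorant_\phi(g_0,\argument)$ means that the Lie-Taylor series of $\phi$ at $g_0$ converges absolutely for arbitrary directions, and the same then holds for each Lie derivative $\Lie(\chi)\phi$, whose majorant at $g_0$ is dominated by a derivative of the entire function $\Majorant_\phi(g_0,\argument)$ and is therefore entire as well. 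Since each $\xi\mapsto\bigl(\Lie(\chi)\phi\bigr)(g_0\exp\xi)$ is real-analytic on the connected manifold $\liealg{g}\cong\R^d$ and its Taylor series at the origin converges globally, the identity theorem for real-analytic functions forces that series to represent the function on all of $\liealg{g}$. This validates the expansion step in the derivation of the inequality for every $\xi$, legitimising the chain above with a $\phi$-independent $s$ and completing the argument.
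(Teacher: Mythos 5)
Your proof is correct and takes essentially the same route the paper intends: the paper's entire ``proof'' is the remark that the inequality $\Majorant_\phi(g_0\exp\xi,\abs{z})\le\Majorant_\phi(g_0,\abs{z}+\supnorm{\xi})$ from the proof of Lemma~\ref{lem:MajorantTranslation}, iterated along a decomposition $g=\exp\xi_1\cdots\exp\xi_n$, yields $\Majorant_\phi(g,r)\le\seminorm{q}_s(\phi)$ with $s=r+\sum_j\supnorm{\xi_j}$. Your closing identity-theorem argument — that for $\phi\in\Entire(G)$ the expansion of $\bigl(\Lie(\chi)\phi\bigr)(g_0\exp\xi)$ is valid for \emph{all} $\xi\in\liealg{g}$ because its majorant is dominated by a derivative of the entire function $\Majorant_\phi(g_0,\argument)$ — correctly closes the $\phi$-uniformity point (the neighbourhood $U_0$ in the lemma's proof a priori depends on $\phi$) that the paper's one-line justification leaves implicit.
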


Moreover, combining Lemma~\ref{lem:MajorantTranslation} with
\eqref{eq:HolomorphicShadowCoefficients}, the following is
now immediate.
\begin{corollary}
    \label{cor:MajorantEntire}%
    Let $G$ be a connected Lie group and consider an entire function $\phi \in
    \Entire(G)$ with holomorphic shadow~$\phi_0 \in \Holomorphic(U)$. Then the
    Lie-Taylor majorant $\Majorant_{\phi_0}(g,\argument)$ is entire for all $g \in U$.
\end{corollary}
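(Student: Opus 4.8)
The plan is to combine two facts we already have: the holomorphic shadow $\phi_0$ pulls back along $\eta$ to $\phi$ on $\eta^{-1}(U)$, preserving all Lie--Taylor data at the unit, and Lemma~\ref{lem:MajorantTranslation} propagates entirety of the Lie--Taylor majorant from one point of a domain to all points of that domain. So the strategy is simply to verify the hypotheses of Lemma~\ref{lem:MajorantTranslation} for the holomorphic function $\phi_0 \in \Holomorphic(U)$ at the single base point $\E \in U$, and then invoke the lemma to conclude for all $g \in U$. Concretely, I would argue as follows.

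First I would note that $U \subseteq G_\C$ is a domain containing the group unit $\E$, and that $\phi_0$, being holomorphic, is in particular real-analytic on $U$ when we forget the complex structure and regard $G_\C$ as a real Lie group. This is the setting in which Lemma~\ref{lem:MajorantTranslation} is stated. It therefore suffices to check that the Lie--Taylor majorant $\Majorant_{\phi_0}(\E,\argument)$ is entire. Here is where the identity~\eqref{eq:HolomorphicShadowCoefficients} enters: taking $g = \E$ in that formula gives
\begin{equation}
    \Lie(\xi)\phi_0
    \at[\Big]{\E}
    =
    \Lie(\xi)\phi
    \at[\Big]{\E}
    \qquad
    \textrm{for all }
    \xi \in \Universal^\bullet(\liealg{g}_\C),
\end{equation}
using that $\eta(\E) = \E$. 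Restricting $\xi$ to run over the non-commutative multi-indices $\basis{e}_{\alpha_1}\cdots\basis{e}_{\alpha_n}$ built from a real basis of $\liealg{g}$, this equality of Lie--Taylor coefficients shows that the coefficients $c_n(\phi_0,\E)$ from~\eqref{eq:LieTaylorMajorantCoefficients} agree with $c_n(\phi,\E)$. Hence the two majorants literally coincide at the unit,
\begin{equation}
    \Majorant_{\phi_0}(\E,\argument)
    =
    \Majorant_{\phi}(\E,\argument),
\end{equation}
and the right-hand side is entire precisely because $\phi \in \Entire(G)$ by Definition~\ref{def:EntireFunction}.

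With the hypothesis verified at $g_0 = \E$, Lemma~\ref{lem:MajorantTranslation} applied to $\phi_0$ on the domain $U$ immediately yields that $\Majorant_{\phi_0}(g,\argument)$ is entire for every $g \in U$, which is the assertion. The only subtlety worth a careful word is the basis bookkeeping: $\Majorant_{\phi_0}$ is computed with respect to a basis of $\LieAlg(G_\C)$, whereas $\Majorant_\phi$ uses a basis of $\liealg{g}$. However, the remark following Definition~\ref{def:EntireFunction} records that entirety of the majorant is independent of the choice of basis, and~\eqref{eq:HolomorphicShadowCoefficients} holds for all $\xi \in \Universal^\bullet(\liealg{g}_\C)$; so one is free to compute the shadow's majorant at $\E$ in the real basis $(\basis{e}_1,\ldots,\basis{e}_d)$ of $\liealg{g}_\C$ coming from $\liealg{g}$, where the coefficient identity applies verbatim. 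I do not anticipate a genuine obstacle here—the real content was already invested in Proposition~\ref{lem:HolomorphicShadows} and Lemma~\ref{lem:MajorantTranslation}—so the proof is essentially a one-line citation of the lemma once the coefficient matching at the unit is observed.
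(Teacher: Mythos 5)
Your proposal is correct and coincides with the paper's own argument, which dispatches the corollary as ``immediate'' by combining Lemma~\ref{lem:MajorantTranslation} with the coefficient identity~\eqref{eq:HolomorphicShadowCoefficients} at $g = \E$ --- exactly your matching of $\Majorant_{\phi_0}(\E,\argument)$ with $\Majorant_\phi(\E,\argument)$ followed by translation of entirety across the domain $U$. Your closing remark on the basis bookkeeping (using that~\eqref{eq:HolomorphicShadowCoefficients} holds for all $\xi \in \Universal^\bullet(\liealg{g}_\C)$, so the coefficients in the directions $\I\basis{e}_n$ of the real basis of $\LieAlg(G_\C)$ are likewise controlled) is in fact slightly more careful than the paper, which suppresses this point entirely.
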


Preparing the final ingredient for our local extension result, we use the group
structure and a touch of Riemannian geometry to define an analogue of Steiner
chains on $G$, i.e. chains of circles such that each circle contains the center of its
successor.
\begin{lemma}[Steiner Chains]
    \label{lem:SteinerChains}%
    Let $G$ be a connected Lie group, $\gamma \colon [0,1] \longrightarrow G$ a
    path from $g_0$ to $g_1$ and $U \subseteq G$ an open neighbourhood of
    the group unit. Then there are
    \begin{equation}
        \label{eq:TimesGoUp}
        0
        =
        t_0
        <
        t_1
        <
        t_2
        < \cdots <
        t_k
        =
        1
    \end{equation}
    with
    \begin{equation}
        \label{eq:SteinerCenters}
        \gamma(t_{\ell+1})
        \in
        \gamma(t_\ell)
        U
        \qquad
        \textrm{for}
        \quad
        \ell = 1, \ldots, k-1
    \end{equation}
    and
    \begin{equation}
        \label{eq:SteinerCovering}
        \gamma\bigl([0,1]\bigr)
        \subseteq
        \bigcup_{\ell=0}^k
        \gamma(t_\ell)
        U.
    \end{equation}
\end{lemma}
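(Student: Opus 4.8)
The plan is to reduce the combinatorial-looking conclusion to a single application of uniform continuity on the compact interval $[0,1]$, using the left-invariant distance supplied by Proposition~\ref{prop:RiemannianLeftInvariant} to convert metric closeness into membership in a translate of $U$. I would equip $G$ with the left-invariant Riemannian distance $\varrho$ from that proposition. The two decisive features are part~\ref{item:RiemannianLengthInvariance}, which gives $\varrho(g_1,g_2)=\varrho(\E,g_1^{-1}g_2)$ by applying left-invariance with the translation $\ell_{g_1^{-1}}$, so that balls around arbitrary points are left translates of balls around $\E$, and part~\ref{item:RiemannianLengthTopology}, which guarantees that the metric topology of $(G,\varrho)$ coincides with the original one.

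First I would fix the scale. Since $U$ is an open neighbourhood of $\E$ and the $\varrho$-topology agrees with that of $G$, there is some $\epsilon>0$ with $\Ball_\epsilon(\E)\subseteq U$. The left-invariance identity then yields the key translation principle: whenever $\varrho\bigl(g,h\bigr)<\epsilon$ we have $\varrho\bigl(\E,g^{-1}h\bigr)<\epsilon$, hence $g^{-1}h\in\Ball_\epsilon(\E)\subseteq U$ and therefore $h\in gU$. Next I would produce a uniform time-scale: as $\gamma\colon[0,1]\longrightarrow(G,\varrho)$ is continuous on a compact interval, it is uniformly continuous, so there exists $\delta>0$ such that $\abs{s-t}<\delta$ implies $\varrho\bigl(\gamma(s),\gamma(t)\bigr)<\epsilon$. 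Finally I would take an equidistant partition: choose $k\in\N$ with $1/k<\delta$ and set $t_\ell\coloneqq\ell/k$, which automatically satisfies \eqref{eq:TimesGoUp}. Consecutive nodes obey $\abs{t_{\ell+1}-t_\ell}=1/k<\delta$, so the translation principle gives $\gamma(t_{\ell+1})\in\gamma(t_\ell)U$, establishing \eqref{eq:SteinerCenters} (indeed for every index, a fortiori for $\ell=1,\ldots,k-1$); and any $s\in[0,1]$ lies in some $[t_\ell,t_{\ell+1}]$, whence $\abs{s-t_\ell}\le 1/k<\delta$ forces $\gamma(s)\in\gamma(t_\ell)U$, which is \eqref{eq:SteinerCovering}.

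There is no serious obstacle here; the only points requiring care are bookkeeping. One must invoke part~\ref{item:RiemannianLengthTopology} twice — so that the chosen $\Ball_\epsilon(\E)$ really sits inside $U$, and so that $\gamma$ is continuous as a map into the metric space $(G,\varrho)$ — and one should note that $\varrho$ is finite-valued, which holds because $G$ is connected and hence path-connected, so every pair of points is joined by a curve of finite length. If one prefers to bypass Riemannian geometry entirely, the same uniform $\delta$ can be extracted from the Lebesgue number lemma applied to the open set $\Theta^{-1}(U)$, where $\Theta\colon[0,1]^2\longrightarrow G$, $\Theta(s,t)\coloneqq\gamma(s)^{-1}\gamma(t)$, contains the compact diagonal; but the left-invariant metric renders the translation principle most transparent, and it is already at hand from Proposition~\ref{prop:RiemannianLeftInvariant}.
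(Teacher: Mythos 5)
Your proof is correct, and it diverges from the paper's in the one step that actually does the work. Both arguments start identically: take the left-invariant distance $\varrho$ from Proposition~\ref{prop:RiemannianLeftInvariant}, use part~\ref{item:RiemannianLengthTopology} to fit a metric ball around $\E$ inside $U$, and use left-invariance (part~\ref{item:RiemannianLengthInvariance}) to recognize translated balls $\gamma(t_\ell)\Ball_r(\E)$ as metric balls $\Ball_r(\gamma(t_\ell))$. The paper then subdivides the curve by \emph{arclength}: since the Riemannian length of $\gamma$ is finite, one cuts the trace into $k$ subarcs of length at most $r$, and each subarc sits inside the closed ball of radius equal to its length around its left endpoint, giving both \eqref{eq:SteinerCenters} and \eqref{eq:SteinerCovering} at once. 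You instead subdivide the \emph{parameter interval}: Heine--Cantor uniform continuity of $\gamma\colon[0,1]\longrightarrow(G,\varrho)$ yields a $\delta$, and an equidistant partition finer than $\delta$ does the job. Your route is slightly more robust: the paper's appeal to finite length tacitly assumes $\gamma$ is rectifiable (automatic for smooth or piecewise-smooth curves, which is what \eqref{eq:RiemannianLength} and \eqref{eq:RiemannianLengthMetric} are set up for), whereas your argument needs only continuity of $\gamma$ --- which matches the bare hypothesis ``path'' in the lemma and its later use for analytic continuation along arbitrary paths in $G_\C$. What the arclength subdivision buys in exchange is a quantitative relation between the number $k$ of chain links and $L(\gamma)/r$, which your $\delta$ does not provide; neither the lemma nor its applications use that, so nothing is lost. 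Your closing observation that the Lebesgue number lemma applied to $\Theta(s,t)=\gamma(s)^{-1}\gamma(t)$ would eliminate the Riemannian metric altogether is also sound, and arguably the most elementary variant of all three.
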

\begin{proof}
    Let $\varrho$ be a left-invariant metric on $G$ as discussed in
    Proposition~\ref{prop:RiemannianLeftInvariant}. By
    \ref{item:RiemannianLengthTopology}, the metric topology reproduces the
    topology of $G$. Thus, there exists a closed metric ball
    $\Ball_r(\E)^\cl \subseteq U$. Moreover, the metric length of $\gamma$ is finite,
    and thus we may divide its trace into $k$ subarcs~$\gamma \at{[t_\ell, t_{\ell+1}]}$
    of length at most $r$ and with times adhering to \eqref{eq:TimesGoUp}. The
    subdividing points fulfil
     \begin{equation}
         \varrho
         \bigl(
            \gamma(t_{\ell+1}),
            \gamma(t_\ell)
         \bigr)
         \le
         r
         \qquad
         \textrm{for }
         \ell=0,\ldots,k-1.
     \end{equation}
     By left-invariance, we moreover have
     \begin{equation}
         \gamma(t_\ell)
         \Ball_r(\E)^\cl
         =
         \Ball_r
         \bigl(
            \gamma(t_\ell)
         \bigr)
         \qquad
         \textrm{for }
         \ell = 0,\ldots,k.
     \end{equation}
     Combining both observations proves \eqref{eq:SteinerCenters} and by virtue of
     \begin{equation}
         \bigcup_{\ell=0}^k
         \gamma(t_\ell)
         U
         \supseteq
         \bigcup_{\ell=0}^k
         \Ball_r
         \bigl(
            \gamma(t_\ell)
         \bigr)^\cl
         \supseteq
         \bigcup_{\ell=0}^k
         \gamma
         \at{[t_\ell, t_{\ell+1}]}
         =
         \gamma
         \bigl(
            [0,1]
         \bigr),
     \end{equation}
     we have also achieved \eqref{eq:SteinerCovering}. Here, we have used that each
     subarc is contained in the closed ball with radius equal to its length.
\end{proof}

With the help of Steiner chains, we may now continue the holomorphic shadow
$\phi_0$ analytically along any path in $G_\C$.
\begin{lemma}
    Let $G$ be a connected Lie group and let~$\phi \in \Entire(G)$. Then its
    holomorphic shadow $\phi_0$ may be analytically continued along any path
    $G_\C$ starting at the group unit.
\end{lemma}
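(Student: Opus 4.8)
The plan is to realize the continuation as a finite chain of Lie-Taylor re-expansions whose radii never degenerate, using the entirety of the Lie-Taylor majorant as the mechanism that keeps the radius uniform along the path. The decisive observation is the following re-expansion principle: fix an open zero neighbourhood $W \subseteq \LieAlg(G_\C)$ on which $\exp_{G_\C}$ restricts to a biholomorphism onto its image, and put $U_0 \coloneqq \exp_{G_\C}(W)$. If a holomorphic function $\psi$ is defined near a point $g \in G_\C$ and its Lie-Taylor majorant $\Majorant_\psi(g, \argument)$ is entire, then the Lie-Taylor series $\Taylor_\psi(g, \argument)$ converges on all of $\LieAlg(G_\C)$ by the very definition of the majorant, and reading it in the left-translated holomorphic chart $\xi \mapsto g \exp_{G_\C}(\xi)$ around $g$ produces a holomorphic function on the entire translated neighbourhood $g U_0$. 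By the Lie-Taylor formula of Theorem~\ref{thm:LieTaylor} this function agrees with $\psi$ near $g$, so it is an honest analytic continuation of $\psi$ across $g U_0$. Crucially, the domain $g U_0$ has a size independent of $g$, since $U_0$ is a fixed neighbourhood translated by the group.

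First I would record that the holomorphic shadow $\phi_0$ is defined on a domain containing $\E$ and that, by Corollary~\ref{cor:MajorantEntire}, its Lie-Taylor majorant $\Majorant_{\phi_0}(g, \argument)$ is entire at every point $g$ of that domain, in particular at $\E$. Next, given a path $\gamma \colon [0,1] \longrightarrow G_\C$ with $\gamma(0) = \E$, I would apply the Steiner chain Lemma~\ref{lem:SteinerChains} — legitimate since $G_\C$ is itself a connected Lie group when viewed over $\R$ — to the neighbourhood $U_0$ of the group unit. This yields subdivision times $0 = t_0 < t_1 < \cdots < t_k = 1$ with $\gamma(t_{\ell+1}) \in \gamma(t_\ell) U_0$ for all $\ell$ and $\gamma([0,1]) \subseteq \bigcup_{\ell} \gamma(t_\ell) U_0$, providing the combinatorial scaffolding along which to propagate the re-expansion.

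The continuation is then built inductively. Set $\phi_0^{(0)} \coloneqq \phi_0$, whose majorant $\Majorant_{\phi_0^{(0)}}(\gamma(t_0), \argument)$ is entire. Assuming $\phi_0^{(\ell)}$ has been constructed as a holomorphic continuation on a domain containing $\gamma(t_\ell) U_0$ with entire majorant at $\gamma(t_\ell)$, Lemma~\ref{lem:MajorantTranslation} upgrades this to entirety of the majorant at \emph{every} point of the connected domain $\gamma(t_\ell) U_0$, in particular at $\gamma(t_{\ell+1})$. The re-expansion principle then produces a holomorphic function $\phi_0^{(\ell+1)}$ on $\gamma(t_{\ell+1}) U_0$ which agrees with $\phi_0^{(\ell)}$ near $\gamma(t_{\ell+1})$, hence throughout the connected overlap by the identity theorem. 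Since $\phi_0^{(\ell+1)}$ arises from an everywhere-convergent Lie-Taylor series, its majorant at the centre $\gamma(t_{\ell+1})$ is again entire, so the inductive hypothesis is restored and the construction proceeds. After $k$ steps the chain of function elements $\bigl(\phi_0^{(\ell)}\bigr)_{\ell=0}^{k}$ covers $\gamma([0,1])$ and constitutes an analytic continuation of $\phi_0$ along $\gamma$.

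The step I expect to be the main obstacle is guaranteeing that the re-expansion radius does not shrink as the chain advances, so that each $\gamma(t_{\ell+1})$ genuinely lands inside the domain produced at the previous link. This is resolved precisely by the interplay of two facts: that an entire Lie-Taylor majorant forces convergence of the Lie-Taylor series on the entire Lie algebra, giving a radius limited only by the injectivity of $\exp_{G_\C}$ and hence the fixed neighbourhood $U_0$; and that Lemma~\ref{lem:MajorantTranslation} transports this entirety from the centre of one link to the centre of the next. Without this uniformity a naive continuation could collapse before reaching the endpoint of $\gamma$; with it, the Steiner chain of the single neighbourhood $U_0$ already suffices.
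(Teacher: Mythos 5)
Your proposal is correct and takes essentially the same route as the paper: the holomorphic shadow together with Corollary~\ref{cor:MajorantEntire}, a Steiner chain from Lemma~\ref{lem:SteinerChains} for a fixed exponential neighbourhood, iterated re-expansion via the Lie-Taylor series, the identity principle on the connected overlaps, and Lemma~\ref{lem:MajorantTranslation} to transport entirety of the majorant to the next centre. One caveat on phrasing: entirety of the majorant of $\phi_0^{(\ell+1)}$ at its centre does not follow merely from the ``everywhere-convergence'' of the defining series (the majorant takes absolute values of the individual Lie-Taylor coefficients before any cancellation), but rather from the fact that $\phi_0^{(\ell+1)}$ agrees with $\phi_0^{(\ell)}$ near $\gamma(t_{\ell+1})$, so the germ-local majorants coincide there and the entirety already established via Lemma~\ref{lem:MajorantTranslation} carries over --- which is exactly the paper's argument and what your closing paragraph in effect asserts.
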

\begin{proof}
    Let $\gamma \colon [0,1] \longrightarrow G_\C$ be a path from $\E$ to $g \in
    G_\C$ and $\phi_0 \in \Holomorphic(U)$ be a holomorphic shadow of $\phi$
    defined on a neighbourhood $U$ of the group unit. Moreover, let $V \subseteq
    \liealg{g}$ be a zero neighbourhood such that $\exp \colon V \longrightarrow
    \exp V$ constitutes a diffeomorphism. Without loss of generality, we may assume
    $\exp V = U$. Invoking Lemma~\ref{lem:SteinerChains}, we get
    \begin{equation}
        0
        =
        t_0
        <
         t_1
        < \cdots <
        t_k
        =
        1
    \end{equation}
    such that
    \begin{equation}
        \gamma
        \bigl(
            [0,1]
        \bigr)
        \subseteq
        \bigcup_{\ell=0}^k
        \gamma(t_\ell)
        U
        \qquad \textrm{and} \qquad
        \gamma(t_{\ell+1})
        \in
        \gamma(t_\ell)
        U
        \qquad
        \textrm{for }
        \ell =1,\ldots,k-1.
    \end{equation}
    By Corollary~\ref{cor:MajorantEntire}, we know that the Lie-Taylor majorant at
    $\gamma(t_1) \in \gamma(t_0) U = \exp V$ is entire. Hence, using that
    $\exp \colon V \longrightarrow \exp V$ is bijective, we may define a holomorphic
    mapping by
    \begin{equation}
        \phi_1
        \colon
        \gamma(t_1)
        \exp V
        \longrightarrow
        \C, \quad
        \phi_1
        \bigl(
            \gamma(t_1)
            \exp \xi
        \bigr)
        \coloneqq
        \Taylor_{\phi_0}
        \bigl(
           \gamma(t_1), \underline{\xi}
        \bigr).
    \end{equation}
    Note that $\exp V \cap \gamma(t_1)\exp V \neq \emptyset$, as $\gamma(t_1)$ is
    within by construction. Hence, the intersection even contains an open connected
    neighbourhood of $\gamma(t_1)$, on which $\phi_0$ and $\phi_1$
    coincide by virtue of the Lie-Taylor formula \eqref{eq:LieTaylor}. Invoking the
    identity principle, this implies $\phi_0 = \phi_1$ on
    \begin{equation}
        \bigl(
            U
            \cap
            \gamma(t_1)U
        \bigr)_0.
    \end{equation}
    As this intersection contains $\gamma(t_1)$ by construction, the Lie-Taylor
    majorants fulfil
    \begin{equation}
        \Majorant_{\phi_1}
        \bigl(
        \gamma(t_1),
        \argument
        \bigr)
        =
        \Majorant_{\phi_0}
        \bigl(
        \gamma(t_1),
        \argument
        \bigr)
        \in
        \Holomorphic(\C),
    \end{equation}
    which returns us to the original assumptions, now for $\phi_1$. We may thus
    iterate the procedure and define
    \begin{equation}
        \phi_2(\exp \xi)
        \coloneqq
        \Taylor_{\phi_1}
        \bigl(
            \gamma(t_2), \underline{\xi}
        \bigr)
    \end{equation}
    and so on. This yields the desired analytic continuation of $\phi_0$ along
    $\gamma$.
\end{proof}

Investing the Monodromy Theorem, as it can be found in the textbook
\cite[Sec.~10.3.4]{krantz:1999a}, yields the existence of a globally defined
holomorphic extensions under the additional assumption that $G_\C$ is simply
connected.
\begin{corollary}
    \label{cor:ExtensionSimplyConnected}%
    Let $G$ be a connected Lie group such that its universal complexification~$G_\C$
    is simply connected and $\phi \in \Entire(G)$. Then there exists a unique
    holomorphic function $\Phi \in \Holomorphic(G_\C)$ such that
    \begin{equation}
        \Phi
        \circ
        \eta
        =
        \phi.
    \end{equation}
\end{corollary}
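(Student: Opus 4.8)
The plan is to globalise the holomorphic shadow of $\phi$ by a monodromy argument, the simple connectivity of $G_\C$ being exactly what removes the topological obstruction to single-valuedness. By the analytic continuation lemma just established, the holomorphic shadow $\phi_0 \in \Holomorphic(U)$ of $\phi$, defined on a domain $U$ containing the group unit, admits an analytic continuation along every path in $G_\C$ starting at $\E$. Recall from Proposition~\ref{prop:UniversalComplexification}, \ref{item:ComplexificationAsQuotient} that $G_\C$ is connected; together with the standing hypothesis it is therefore a simply connected complex manifold, which is the setting in which the Monodromy Theorem applies.

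First I would invoke the Monodromy Theorem \cite[Sec.~10.3.4]{krantz:1999a}. Since any two paths in the simply connected $G_\C$ with common endpoints are homotopic and the analytic continuation of $\phi_0$ is invariant under homotopies with fixed endpoints, the germ obtained at a point $g \in G_\C$ is independent of the chosen path from $\E$ to $g$. Patching the resulting function elements thus yields a single-valued global holomorphic function $\Phi \in \Holomorphic(G_\C)$ that restricts to $\phi_0$ on a neighbourhood of the group unit.

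Next I would verify the pullback condition $\Phi \circ \eta = \phi$. Near the group unit this is precisely the defining property \eqref{eq:HolomorphicShadowPullback} of the shadow, so $\Phi \circ \eta$ and $\phi$ agree on the open neighbourhood $\eta^{-1}(U)$ of $\E$ in $G$. Both maps are real-analytic with respect to the exponential atlas of $G$ — the former as the composition of the holomorphic map $\Phi$ with the real-analytic morphism $\eta$, the latter because $\phi \in \Comega(G)$ — and $G$ is connected. Hence the identity theorem for real-analytic functions forces $\Phi \circ \eta = \phi$ on all of $G$. Uniqueness then follows from the left-invariant identity principle: if $\Phi' \in \Holomorphic(G_\C)$ also satisfies $\Phi' \circ \eta = \phi$, then $\Phi - \Phi'$ vanishes on $\eta(G)$, and applying Corollary~\ref{cor:IdentityPrinciple} with the domain $U = G_\C$, whose intersection with $\eta(G)$ is nonempty, shows that $\Phi - \Phi'$ is constant and thus identically zero.

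I expect the monodromy step to be the main obstacle, or rather the conceptual crux: the analytic continuation of $\phi_0$ is guaranteed to be single-valued only because $G_\C$ is simply connected, and it is exactly here that the hypothesis is indispensable. Dropping it, one would obtain at best a holomorphic function on the universal cover of $G_\C$, which is the residual difficulty that the proof of the general Theorem~\ref{thm:Extension} must still overcome by descending along the projection $\pi \colon \Covering_\C \longrightarrow G_\C$ via a periodicity argument.
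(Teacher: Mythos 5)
Your proposal is correct and matches the paper's (largely implicit) argument exactly: the preceding continuation lemma plus the Monodromy Theorem on the connected, simply connected $G_\C$ globalises the holomorphic shadow, the pullback identity $\Phi \circ \eta = \phi$ propagates from \eqref{eq:HolomorphicShadowPullback} by an identity-theorem argument, and uniqueness follows from Corollary~\ref{cor:IdentityPrinciple}. Your use of the real-analytic identity theorem on the connected group $G$ to globalise the pullback condition is a perfectly valid filling-in of a step the paper leaves unspoken.
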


The idea is now that this resolves the extension problem for the universal covering
group~$\Covering$, as its universal complexification is always simply connected by
Proposition~\ref{prop:UniversalComplexification},
\ref{item:ComplexificationCovering}. We may now put the discussion after
Theorem~\ref{thm:Extension} into practice to produce its proof.
\begin{proof}[Of Theorem~\ref{thm:Extension}]
    Let $\phi \in \Entire(G)$ and write $p \colon \Covering \longrightarrow G$ for the
    universal covering projection. By Proposition~\ref{prop:EntireFunctoriality}, the
    pullback $\psi \coloneqq p^* \phi$ is then an entire function on $\Covering$. As
    the universal covering group is simply connected by its very definition, we may
    invoke Corollary~\ref{cor:ExtensionSimplyConnected} to obtain a unique
    holomorphic mapping $\Psi \in \Holomorphic(\Covering_\C)$ such that
    \begin{equation}
        \Psi
        \circ
        \tilde{\eta}
        =
        \psi,
    \end{equation}
    where $\tilde{\eta} \colon \Covering \longrightarrow \Covering_\C$ denotes the
    universal complexification morphism for $\Covering$. We show that $\Psi$ is
    invariant under the action of $\tilde{\eta}(\pi_1(G))$. To this end, we first note that
    as a pullback along the covering projection, the mapping $\psi$ is
    $\pi_1(G)$-periodic, as $\pi_1(G) = \ker p$. Fix now $h \in \pi_1(G) \subseteq
    \tilde{G}$ and consider the auxiliary function
    \begin{equation}
        \Psi_h
        \colon
        \Covering_\C
        \longrightarrow
        \C, \quad
        \Psi_h(g)
        \coloneqq
        \Psi
        \bigl(
            g
            \tilde{\eta}(h)
        \bigr).
    \end{equation}
    Notice
    \begin{equation}
        \Psi_h
        \bigl(
            \tilde{\eta}(g)
        \bigr)
        =
        \Psi
        \bigl(
            \tilde{\eta}(gh)
        \bigr)
        =
        \psi(gh)
        =
        \psi(g)
        =
        \Psi
        \bigl(
            \tilde{\eta}(g)
        \bigr)
        \qquad
        \textrm{for all }
        g \in \tilde{G}.
    \end{equation}
    This implies $\Psi_h = \Psi$ by the identity principle from
    Corollary~\ref{cor:IdentityPrinciple}. Varying $h \in \pi_1(G)$, we get that $\Psi$
    is indeed invariant under the action of $\tilde{\eta}(\pi_1(G))$. Consequently, it
    factors through the quotient projection $\pi \colon \tilde{G}_\C \longrightarrow
    G_\C$, see again Proposition~\ref{prop:UniversalComplexification},
    \ref{item:ComplexificationAsQuotient}, yielding a function $\Phi \in
    \Holomorphic(G_\C)$. Recall now that $\eta \circ p = \pi \circ \tilde{\eta}$ by virtue
    of \eqref{eq:ComplexificationFromUniversalCovering} and thus
    \begin{equation*}
        \Phi
        \circ
        \eta
        \circ
        p
        =
        \Phi
        \circ
        \pi
        \circ
        \tilde{\eta}
        =
        \Psi
        \circ
        \tilde{\eta}
        =
        \psi
        =
        \phi
        \circ
        p.
    \end{equation*}
    Invoking the local bijectivity of $p$, this implies $\Phi \circ \eta = \phi$ as
    desired. Moreover, the uniqueness of~$\Phi$ is clear by
    Corollary~\ref{cor:IdentityPrinciple}. Finally,
    \eqref{eq:HolomorphicShadowCoefficients} is just \eqref{eq:ExtensionLieTaylor}
     again.
\end{proof}

\begin{remark}[Penney's Theorem]
    Let $G$ be a connected, solvable and simply connected Lie group. The paper
    \cite{penney:1974a} deals with holomorphic extensions of Lie group
    representations
    \begin{equation}
        \pi
        \colon
        G
        \longrightarrow
        \GL(V)
    \end{equation}
    on locally convex spaces $V$. Viewing the Lie-Taylor formula as the integration of
    $\Lie$ from \eqref{eq:LieDerivative} to the group representation given by right
    translation,~\cite[Lemma~2]{penney:1974a} provides another avenue of proof for
    Theorem~\ref{thm:Extension} if combined with our geometric considerations to
    pass to the holomorphic shadow as a preliminary step. Penney's technique is
    based on carefully counting the amount of lower order terms one picks up upon, if
    given $1 \le j_1, \ldots, j_n \le d$, one rewrites this as
    \begin{equation}
        \basis{e}_{j_1}
        \cdots
        \basis{e}_{j_n}
        =
        \basis{e}_{k_1}
        \cdots
        \basis{e}_{k_n}
        +
        \textrm{ lower order terms}
    \end{equation}
    with $1 \le k_1 \le k_2 \le \cdots \le k_n \le d$ within the universal enveloping
    algebra $\Universal^\bullet(\liealg{g})$. More on this representation theoretic
    point of view can be found in \cite[Rmk.~4.16]{heins.roth.waldmann:2023a} and
    \cite[Sec.~3.6 \& 3.7]{heins:2024a}. Be advised that the former refers to strongly
    entire vectors simply as entire.
\end{remark}

\section{Restriction: Cauchy Estimates}
\label{sec:Restriction}%

In this section, we always work with a complex Lie group $G$. Viewing it as a real Lie
group with real Lie algebra $\liealg{g}$, we may define $\Entire(G)$ as before. Taking
another look at Theorem~\ref{thm:Extension}, our goal is to establish the embedding
\begin{equation}
    \Holomorphic(G)
    \subseteq
    \Entire(G),
\end{equation}
which means we have to estimate the Lie-Taylor majorants by means of supremum
norms
\begin{equation}
    \norm{\phi}_K
    \coloneqq
    \max_{g \in K}
    \abs[\big]
    {\phi(z)}
\end{equation}
for compact subsets $K \subseteq G$, and vice versa. The latter is based on the
Lie-Taylor formula~\eqref{eq:LieTaylor}. Moreover, we achieve the former by deriving
a Lie-theoretic version of the Cauchy estimates, which bound each Lie-Taylor
coefficient individually, but ultimately in a uniform manner. We discuss two flavours
of essentially the same technique: one for operator norms and matrix Lie groups and
the other in full generality using the left-invariant Riemannian geometry we have
established within Proposition~\ref{prop:RiemannianLeftInvariant}.

Thus, assume for the moment that $G$ is linear and choose a finite
dimensional faithful representation. As both $\Holomorphic$ and~$\Entire$ are
functors, this choice indeed does not constitute a loss of generality. Working with
matrices $M \in \Mat_d(\C)$, we choose some auxiliary norm on~$\C^d$ and denote
the resulting operator norm simply by $\norm{M}$. By our assumption, it makes
sense to speak both of operator norms of group elements and of Lie algebra
elements. That is to say, there is a direct way to compare sizes of infinitesimal and
exponentiated elements. As the Lie exponential is simply given by the exponential
series for matrix Lie groups, we get the intuitive estimate
\begin{equation}
    \label{eq:OperatorNormExponential}
    \norm
    [\big]
    {\exp \xi}
    \le
    e^{\norm{\xi}}
    \qquad
    \textrm{for all }
    \xi \in \liealg{g},
\end{equation}
where $e$ denotes Euler's number. This may be thought of as a continuity estimate
for the Lie exponential much like \eqref{eq:RiemannianLengthLipschitz}. Keeping this
in mind, we arrive at the following first Lie-theoretic incarnation of the Cauchy
estimates.
\begin{lemma}[Lie-theoretic Cauchy estimates~I]
    \label{lem:CauchyEstimates}%
    Let $G$ be a complex connected matrix Lie group, $\phi \in \Holomorphic(G)$,
    $K_0 \subseteq \liealg{g}$ a compact neighbourhood of zero and $r > 0$. Then,
    writing
    \begin{equation}
        K_r
        \coloneqq
        \bigl\{
            g \in G
            \colon
            \norm{g}
            \le
            e^r
        \bigr\},
    \end{equation}
    we have
    \begin{equation}
        \label{eq:CauchyEstimates}
        \abs[\Big]
        {
            \bigl(
            \Lie(\xi_1 \cdots \xi_n)
            \phi
            \bigr)
            (g \exp \xi)
        }
        \le
        \norm{\phi}_{g\exp(K_0)K_r}
        \cdot
        \frac{n^n}{r^n}
    \end{equation}
    for all $g \in G$, $\xi \in K_0$, $n \in \N$ and $\xi_1, \ldots, \xi_n \in \liealg{g}$
    with $\norm{\xi_j} \le 1$ for $j=1, \ldots, n$.
\end{lemma}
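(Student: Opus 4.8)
The plan is to estimate a single Lie-Taylor coefficient by reducing it to a classical multivariable Cauchy estimate. The core idea: since $\phi$ is holomorphic on the complex Lie group $G$, the composite $\phi \circ \ell_{g\exp\xi} \circ \exp_G$ is a holomorphic function of several complex variables near the origin in $\liealg{g}$. A Lie derivative $\Lie(\xi_1 \cdots \xi_n)\phi$ at the point $g\exp\xi$ is, by left-invariance and the flow interpretation in \eqref{eq:LieDerivativeFlow}, just an iterated directional derivative of this composite at zero. So the left-hand side of \eqref{eq:CauchyEstimates} becomes a bounded derivative of a bounded holomorphic function, which is exactly the setting for Cauchy's inequality.

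**First I would** set up the right domain. Fix $g \in G$, $\xi \in K_0$, and unit-norm directions $\xi_1, \ldots, \xi_n$. I would consider, for a single complex parameter $z$, the map $z \mapsto \phi\bigl(g\exp\xi \cdot \exp(z \cdot \tfrac{?}{})\bigr)$—but since the $\xi_j$ differ, the cleanest route is to introduce $n$ independent complex variables $z_1, \ldots, z_n$ and look at
\begin{equation}
    F(z_1, \ldots, z_n)
    \coloneqq
    \phi
    \bigl(
        g \exp\xi \cdot \exp(z_1 \xi_1) \cdots \exp(z_n \xi_n)
    \bigr).
\end{equation}
Because $G$ is a \emph{complex} Lie group and $\exp_G$ is holomorphic, $F$ is holomorphic in a neighborhood of the origin in $\C^n$. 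The mixed derivative $\partial_{z_1} \cdots \partial_{z_n} F$ evaluated at $0$ recovers $\bigl(\Lie(\xi_1 \cdots \xi_n)\phi\bigr)(g\exp\xi)$, since each $\partial_{z_j}|_0$ acts as the left-invariant derivative in direction $\xi_j$ applied at the appropriately translated point, and left-invariance lets these derivatives stack up as the ordered product in $\Universal^\bullet(\liealg{g}_\C)$.

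**Next I would** choose the polyradius to produce the exponent $n^n/r^n$. Applying Cauchy's estimate on a polydisk of polyradius $(\rho, \ldots, \rho)$ gives $\bigl|\partial_{z_1}\cdots\partial_{z_n}F(0)\bigr| \le \|F\|_{\text{polydisk}} \cdot \rho^{-n}$. Using $\|\xi_j\| \le 1$ together with the operator-norm exponential bound \eqref{eq:OperatorNormExponential}, each factor $\exp(z_j\xi_j)$ has norm at most $e^{|z_j| \cdot \|\xi_j\|} \le e^\rho$ on the polydisk, so the argument of $\phi$ stays in $g\exp(K_0) \cdot K_{n\rho}$ by the submultiplicativity of the operator norm and the definition of $K_r$. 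The sharp choice is $\rho = r/n$: then the product of $n$ exponential factors lands in $K_r$, the supremum of $|F|$ is bounded by $\|\phi\|_{g\exp(K_0)K_r}$, and $\rho^{-n} = n^n/r^n$, matching \eqref{eq:CauchyEstimates} exactly.

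**The main obstacle** will be justifying cleanly that the iterated single-variable Cauchy estimate in these $n$ non-commuting group directions genuinely reproduces the ordered Lie-derivative product $\Lie(\xi_1 \cdots \xi_n)$, rather than some symmetrized or lower-order variant. The key is that left-invariance \eqref{eq:LieDerivativeFlow} converts each $\partial_{z_j}|_{z_j=0}$ into $\Lie(\xi_j)$ acting at the point already shifted by the earlier exponentials, so differentiating in the order $z_1, \ldots, z_n$ and setting all variables to zero peels off the factors left-to-right in precisely the order $\basis{e}_{?} \cdots$ encoded by $\xi_1 \cdots \xi_n$. Verifying this requires care with the order of evaluation and the fact that mixed partials commute for the holomorphic $F$ while the \emph{group} translations do not—the resolution is that the correspondence between $\partial_{z_1}\cdots\partial_{z_n}F(0)$ and the ordered operator product holds exactly because each derivative is taken at $z_j = 0$ before the next, which I would spell out by induction on $n$ using the flow formula. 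The norm bookkeeping with $K_{n\rho} \subseteq K_r$ and the choice $\rho = r/n$ is then routine.
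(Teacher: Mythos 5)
Your proposal is correct and follows essentially the same route as the paper's proof: both reduce $\Lie(\xi_1 \cdots \xi_n)\phi$ by induction via the flow formula \eqref{eq:LieDerivativeFlow} to a mixed first-order derivative of the holomorphic function $(z_1,\ldots,z_n) \mapsto \phi(g\exp\xi \cdot \exp(z_1\xi_1)\cdots\exp(z_n\xi_n))$, apply the classical Cauchy estimates on a polydisk of polyradius $r/n$, and use submultiplicativity together with \eqref{eq:OperatorNormExponential} to confine the argument to $g\exp(K_0)K_r$. Your worry about the ordering of the non-commuting exponential factors is resolved exactly as you suggest (and the bound is in any case symmetric in the factors, so the order is immaterial for the estimate).
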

\begin{proof}
    Fix $n \in \N$ and $\xi_1, \ldots, \xi_n \in \Ball_1(0)^\cl \subseteq \liealg{g}$.
    Evaluating \eqref{eq:LieDerivativeHigherOrder} for $k=1$ and $s = 0$ yields
    \begin{equation}
        \bigl(
            \Lie(\xi_j)
            \phi
        \bigr)
        (g \exp \xi)
        =
        \frac{\partial}{\partial z}
        \phi
        \bigl(
            g \exp \xi \cdot \exp(z \xi_j)
        \bigr)
        \at[\Big]{z = 0}
    \end{equation}
    for $j = 1, \ldots, n$, $g \in G$ and $\xi \in K_0$. As the Lie exponential induces
    a holomorphic atlas, the composition
    \begin{equation}
        \phi
        \circ
        \ell_{g \exp(\xi)}
        \circ
        \exp
        \colon
        \liealg{g} \longrightarrow \C
    \end{equation}
    is holomorphic. By induction, we get
    \begin{equation*}
        \Lie(\xi_1 \cdots \xi_n)
        \phi
        (g \exp \xi)
        =
        \frac{\partial}{\partial z^1}
        \cdots
        \frac{\partial}{\partial z^n}
        \phi
        \bigl(
            g \exp \xi \cdot \exp(z^n \xi_n) \cdots \exp(z^1 \xi_1)
        \bigr)
        \at[\Big]{z^1 = \ldots = z^n = 0},
    \end{equation*}
    where
    \begin{equation}
        \C^n
        \ni
        z
        \mapsto
        \phi
        \bigl(
        g
        \exp(\xi)
        \exp(z^n \xi_n)
        \cdots
        \exp(z^1 \xi_1)
        \bigr)
    \end{equation}
    is holomorphic. Notably, there are only first derivatives involved. Applying the
    usual Cauchy estimates for first derivatives with all radii equal to $r/n$ yields
    \begin{equation}
        \label{eq:CauchyEstimatesProof}
        \abs[\Big]
        {
            \bigl(
            \Lie(\xi_1 \cdots \xi_n)
            \phi
            \bigr)
            (g \exp \xi)
        }
        \le
        \frac{n^n}{r^n}
        \max_{\abs{z^1} = \cdots = \abs{z^n} = r/n}
        \abs[\Big]
        {
            \phi
            \bigl(
                g \exp \xi
                \cdot
                \exp(z^n \xi_n) \cdots \exp(z^1 \xi_1)
            \bigr)
        }.
    \end{equation}
    By submultiplicativity of the operator norm and
    \eqref{eq:OperatorNormExponential}, this implies
    \begin{align*}
        \norm[\big]
        {\exp(z^n \xi_n) \cdots \exp(z^1 \xi_1)}
        &\le
        \norm[\big]
        {\exp(z^n \xi_n)}
        \cdots
        \norm[\big]
        {\exp(z^1 \xi_1)} \\
        &\le
        \exp
        \bigl(
        \abs{z^n} \cdot \norm{\xi_n}
        \bigr)
        \cdots
        \exp
        \bigl(
        \abs{z^1} \cdot \norm{\xi_1}
        \bigr) \\
        &=
        \exp
        \bigl(
        \abs{z^n} \cdot \norm{\xi_n}
        +
        \cdots
        +
        \abs{z^n} \cdot \norm{\xi_1}
        \bigr) \\
        &\le
        \exp(r)
    \end{align*}
    for $\abs{z^1} = \cdots = \abs{z^n} = r/n$. That is to say, we have
    \begin{equation}
        \exp(z^n \xi_n)
        \cdots
        \exp(z^1 \xi_1)
        \in
        K_r
        \qquad
        \textrm{whenever}
        \quad
        \abs{z^1} = \ldots = \abs{z^n} = r/n.
    \end{equation}
    Consequently, \eqref{eq:CauchyEstimates} follows from
    \eqref{eq:CauchyEstimatesProof} by varying $z$ and $\xi$.
\end{proof}

Next, we replace the operator norms with the metric data induced by the choice of a
Riemannian metric $\mathbb{g}$ on $\liealg{g}$, as we have discussed in
Proposition~\ref{prop:RiemannianLeftInvariant}. This yields the following estimate,
which is structurally identical to \eqref{eq:CauchyEstimates}.
\begin{lemma}[Lie-theoretic Cauchy estimates II]
    \label{lem:CauchyEstimatesRiemannian}%
    Let $G$ be a complex connected Lie group, $\phi \in \Holomorphic(G)$ and
    $\mathbb{g} \in \liealg{g} \vee \liealg{g}$ be non-degenerate. We
    endow $\liealg{g}$ and $G$ with the corresponding norm $\norm{\argument}$
    and length $\varrho$, respectively. Moreover, let $K_0 \subseteq \liealg{g}$ be a
    compact neighbourhood of zero and $r > 0$. Then
    \begin{equation}
        \label{eq:CauchyEstimatesLipschitz}
        \abs[\Big]
        {
            \bigl(
            \Lie(\xi_1 \cdots \xi_n)
            \phi
            \bigr)
            (g \exp \xi)
        }
        \le
        \norm{\phi}_{g \exp(K_0) \Ball_r(\E)^\cl}
        \cdot
        \frac{n^n}{r^n}
    \end{equation}
    for all $g \in G$, $\xi \in K_0$, $n \in \N$ and $\xi_1, \ldots, \xi_n \in \liealg{g}$
    with $\norm{\xi_j} \le 1$ for $j=1, \ldots, n$.
\end{lemma}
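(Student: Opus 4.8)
The plan is to mirror the proof of Lemma~\ref{lem:CauchyEstimates}, replacing the operator-norm control of products of exponentials by the Riemannian length estimate from Proposition~\ref{prop:RiemannianLeftInvariant}. The statement is structurally identical; the only place where operator norms entered the previous argument was in bounding $\norm{\exp(z^n\xi_n)\cdots\exp(z^1\xi_1)}$ via submultiplicativity and \eqref{eq:OperatorNormExponential}. Here the corresponding role is played by the left-invariance of $\varrho$ together with \eqref{eq:RiemannianLengthMultiplicativity} and \eqref{eq:RiemannianLengthLipschitz}.

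The sketch

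First I would observe, exactly as before, that for fixed $n\in\N$ and $\xi_1,\ldots,\xi_n\in\liealg{g}$ with $\norm{\xi_j}\le 1$, the map
\begin{equation}
    \C^n
    \ni
    z
    \mapsto
    \phi
    \bigl(
    g\exp(\xi)\exp(z^n\xi_n)\cdots\exp(z^1\xi_1)
    \bigr)
\end{equation}
is holomorphic, since $\exp$ furnishes a holomorphic atlas on the complex Lie group $G$, and that
\begin{equation}
    \Lie(\xi_1\cdots\xi_n)\phi(g\exp\xi)
    =
    \frac{\partial}{\partial z^1}\cdots\frac{\partial}{\partial z^n}
    \phi
    \bigl(
    g\exp\xi\cdot\exp(z^n\xi_n)\cdots\exp(z^1\xi_1)
    \bigr)
    \at[\Big]{z^1=\cdots=z^n=0}
\end{equation}
by iterating \eqref{eq:LieDerivativeHigherOrder}. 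Applying the classical multivariable Cauchy estimate for first derivatives on the polydisk of polyradius $(r/n,\ldots,r/n)$ then yields the factor $n^n/r^n$ together with a supremum of $\abs{\phi}$ over the image of that torus under right multiplication by $g\exp\xi$.

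The geometric replacement

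The crux is to show that this supremum is controlled by $\norm{\phi}_{g\exp(K_0)\Ball_r(\E)^\cl}$, i.e.\ that $\exp(z^n\xi_n)\cdots\exp(z^1\xi_1)\in\Ball_r(\E)^\cl$ whenever $\abs{z^1}=\cdots=\abs{z^n}=r/n$. To this end I would estimate, using \eqref{eq:RiemannianLengthMultiplicativity} and then \eqref{eq:RiemannianLengthLipschitz},
\begin{equation}
    \varrho
    \bigl(
    \E,\exp(z^n\xi_n)\cdots\exp(z^1\xi_1)
    \bigr)
    \le
    \sum_{j=1}^n
    \varrho
    \bigl(
    \E,\exp(z^j\xi_j)
    \bigr)
    \le
    \sum_{j=1}^n
    \norm{z^j\xi_j}
    =
    \sum_{j=1}^n
    \abs{z^j}\cdot\norm{\xi_j}
    \le
    n\cdot\frac{r}{n}
    =
    r,
\end{equation}
where the last inequality uses $\norm{\xi_j}\le 1$ and $\abs{z^j}=r/n$. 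Hence the relevant product lies in $\Ball_r(\E)^\cl$ and, since $g\exp\xi\in g\exp(K_0)$, the whole argument $g\exp\xi\cdot\exp(z^n\xi_n)\cdots\exp(z^1\xi_1)$ lies in $g\exp(K_0)\Ball_r(\E)^\cl$. Bounding the supremum by $\norm{\phi}_{g\exp(K_0)\Ball_r(\E)^\cl}$ and varying $z$ and $\xi$ then gives \eqref{eq:CauchyEstimatesLipschitz}.

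The main obstacle

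The one genuine subtlety, as opposed to the matrix case, is that $\xi\mapsto z\xi$ for complex scalar $z$ requires interpreting $\norm{z^j\xi_j}$ correctly: the norm $\norm{\argument}$ lives on the real Lie algebra $\liealg{g}$, whereas $z^j\xi_j$ involves a complex scaling. The clean way around this is to note that $\exp(z^j\xi_j)=\exp_G((\Re z^j)\xi_j+(\Im z^j)(\I\xi_j))$ and to apply \eqref{eq:RiemannianLengthLipschitz} to the real Lie algebra element $w_j\coloneqq(\Re z^j)\xi_j+(\Im z^j)\,\I\xi_j$, so that $\varrho(\E,\exp(z^j\xi_j))\le\norm{w_j}$. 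Provided the non-degenerate form $\mathbb{g}$ is chosen compatibly (e.g.\ extended so that $\norm{\I\xi_j}=\norm{\xi_j}$ and the real/imaginary directions are orthogonal), one recovers $\norm{w_j}=\abs{z^j}\cdot\norm{\xi_j}$ exactly; in general this is where one must be mildly careful, since an arbitrary real metric on the underlying real Lie algebra need not see the complex structure. I expect this bookkeeping about how the complex scalar interacts with the real Riemannian norm to be the only place demanding attention; the remainder is a verbatim translation of the proof of Lemma~\ref{lem:CauchyEstimates}.
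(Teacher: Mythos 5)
Your proposal is correct and follows essentially the same route as the paper: the paper likewise observes that the derivation of \eqref{eq:CauchyEstimatesProof} in Lemma~\ref{lem:CauchyEstimates} never used the matrix structure, and then replaces the operator-norm bound by the chain $\varrho(\E,\exp(z^1\xi_1)\cdots\exp(z^n\xi_n)) \le \sum_j \varrho(\E,\exp(z^j\xi_j)) \le \sum_j \abs{z^j}\cdot\norm{\xi_j} \le r$ via \eqref{eq:RiemannianLengthMultiplicativity} and \eqref{eq:RiemannianLengthLipschitz}, exactly as you do. The subtlety you flag at the end is genuine and is in fact glossed over in the paper's own proof, which writes $\varrho(\E, z^j\xi_j) \le \abs{z^j}\cdot\norm{\xi_j}$ without comment: since \eqref{eq:RiemannianLengthLipschitz} applies to the real Lie algebra element $(\Re z^j)\xi_j + (\Im z^j)\,\I\xi_j$, the identity $\norm{z^j\xi_j} = \abs{z^j}\cdot\norm{\xi_j}$ requires the norm to interact isometrically with the complex structure, which an arbitrary non-degenerate $\mathbb{g} \in \liealg{g} \vee \liealg{g}$ need not do. Your fix (choosing $\mathbb{g}$ compatible with $\I$) is the clean one; alternatively one may absorb the operator norm of $\I$-multiplication into a constant, enlarging the ball to $\Ball_{Cr}(\E)^\cl$, which is harmless for the application in Theorem~\ref{thm:Restriction} where only finiteness of the seminorms is needed.
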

\begin{proof}
    Fix $g \in G$, $\xi \in K_0$ and $\xi_1, \ldots, \xi_n \in \liealg{g}$. Taking another
    look at the proof of Lemma~\ref{lem:CauchyEstimates}, we did not use that $G$
    was a matrix Lie group in the derivation of \eqref{eq:CauchyEstimatesProof}.
    Combining \eqref{eq:RiemannianLengthMultiplicativity} with
    \eqref{eq:RiemannianLengthLipschitz}, we moreover have
    \begin{align}
        \varrho
        \bigl(
            \E,
            \exp(z^1 \xi_1)
            \cdots
            \exp(z^n \xi_n)
        \bigr)
        &\le
        \varrho(\E, z^1 \xi_1)
        + \cdots +
        \varrho(\E, z^n \xi_n) \\
        &\le
        \abs{z}^1
        \cdot
        \norm{\xi_1}
        + \cdots +
        \abs{z}^n
        \cdot
        \norm{\xi_n} \\
        &\le
        \frac{r}{n}
        \cdot
        1
        + \cdots +
        \frac{r}{n}
        \cdot
        1 \\
        &=
        r
    \end{align}
    for $\abs{z}^1 = \ldots = \abs{z}^n = r/n$. That is to say,
    \begin{equation}
        \exp(z^1 \xi_1)
        \cdots
        \exp(z^n \xi_n)
        \in
        \Ball_r(\E)^\cl
        \qquad
        \textrm{whenever}
        \quad
        \abs{z^1} = \ldots = \abs{z^n} = r/n.
    \end{equation}
    Varying $g$ and $\xi$ thus proves \eqref{eq:CauchyEstimatesLipschitz}.
\end{proof}

Having established the Lie-theoretic Cauchy estimates, we may return to
Theorem~\ref{thm:Restriction}.
\begin{proof}[Of Theorem~\ref{thm:Restriction}]
    Let $\mathbb{g} \in \liealg{g} \vee \liealg{g}$ be a non-degenerate two-form with
    accompanying norm~$\norm{\argument}$ on $\liealg{g}$ and distance $\varrho$
    on $G$. Consider $\phi \in \Holomorphic(G)$. Then $\phi \in \Comega(G)$, which
    means we only have to check the seminorm conditions. Let $(\basis{e}_1, \ldots,
    \basis{e}_d) \subseteq \liealg{g}$ be a basis such that $\norm{\basis{e}_n} = 1$ for
    $n=1,\ldots,d$. Applying Lemma~\ref{lem:CauchyEstimatesRiemannian} with
    constant $rd$, this yields
    \begin{equation}
        \seminorm{q}_{r}
        (\phi)
        =
        \sum_{n=0}^{\infty}
        \frac{r^n}{n!}
        \sum_{\alpha \in \N_d^n}
        \abs[\Big]
        {
            \bigl(
                \Lie(\alpha) \phi
            \bigr)(\E)
        }
        \le
        \norm{\phi}_{\Ball_r(\E)^\cl}
        \cdot
        \sum_{n=0}^{\infty}
        \frac{n!}{n^n}
        <
        \infty
    \end{equation}
    for any $r > 0$ by virtue of $\lim_{n \rightarrow \infty} \sqrt[n]{n!}/n = e^{-1}$. This
    moreover establishes the continuity of the $\Entire$-seminorms from
    \eqref{eq:EntireSeminorms} with respect to the topology of locally uniform
    convergence on $\Holomorphic(G)$. Conversely, let $K \coloneqq \Ball_1(0)^\cl
    \subseteq \liealg{g}$ be the closed unit ball corresponding to our norm. We cover
    $G$ by the translates $g \exp K$ by varying~$g$. The Lie-Taylor formula
    \eqref{eq:LieTaylor} yields then
    \begin{equation*}
        \norm[\big]
        {\phi}_{g\exp(K_0)}
        =
        \max_{\xi \in K_0}
        \abs[\big]
        {\phi(g \exp \xi)}
        \le
        \sum_{n=0}^{\infty}
        \frac{1}{n!}
        \sum_{\alpha \in \{1,\ldots,d\}^n}
        \abs[\Big]
        {
            \bigl(
                \Lie(\alpha) \phi
            \bigr)(g)
        }
        =
        \Majorant_\phi(g,\E)
    \end{equation*}
    for any $g \in G$. By Corollary~\ref{cor:MajorantsAsSeminorms},
    $\phi \mapsto \Majorant_{\phi}(g,\E)$ constitutes a continuous seminorm
    on~$\Entire(G)$, which completes the proof.
\end{proof}

We conclude the section with the sketch of an application to strict deformation
quantization, which is what the algebra~$\Entire(G)$ was originally conceived for
within \cite{heins.roth.waldmann:2023a}. Glossing over the motivation entirely, we
define the holomorphic polynomial algebra of $G_\C$ by
\begin{equation}
    \Holomorphic_\Pol(T^*G)
    \coloneqq
    \Holomorphic(G_\C)
    \tensor_\pi
    \Holomorphic_1(\liealg{g}_\C^*)
\end{equation}
where $\tensor_\pi$ denotes the projective tensor product as discussed e.g. in
\cite[§41]{koethe:1979a} and
\begin{equation}
    \Holomorphic_1(\liealg{g}_\C^*)
    \coloneqq
    \Holomorphic(\liealg{g}_\C^*)
    \cap
    \Entire_1(\liealg{g}_\C^*)
\end{equation}
in the sense of Remark~\ref{rem:PositiveR}. Note that this indeed makes sense, since
the dual $\liealg{g}_\C^*$ is a vector space and may thus be viewed as an abelian Lie
group. Theorem~\ref{thm:EntireVsHolomorphic} now allows us to identify the first
factor with $\Entire(G)$, which combined with
\cite[Thm~6.3]{heins.roth.waldmann:2023a} results in the following Theorem.
\begin{theorem}[Complex Standard Ordered Star Product]
    \label{thm:StarProductHolomorphicContinuity}%
    Let $G$ be a connected Lie group.
    \begin{theoremlist}
        \item The standard ordered star product
        \begin{equation}
            \star_\std
            \colon
            \Holomorphic_\Pol(T^*G)
            \tensor_\pi
            \Holomorphic_\Pol(T^*G)
            \longrightarrow
            \Holomorphic_\Pol(T^*G)
        \end{equation}
        is well defined and continuous.
        \item The mapping
        \begin{equation}
            \C
            \times
            \Holomorphic_\Pol(T^*G)
            \otimes
            \Holomorphic_\Pol(T^*G)
            \ni
            (\hbar, \Phi, \Psi)
            \mapsto
            \Phi
            \star_\std
            \Psi
            \in
            \Holomorphic_\Pol(T^*G)
        \end{equation}
        is Fréchet holomorphic.
    \end{theoremlist}
\end{theorem}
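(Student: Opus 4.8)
The plan is to transport both assertions across the Fréchet algebra isomorphism provided by Theorem~\ref{thm:EntireVsHolomorphic}, thereby reducing them to their already established real-analytic counterpart \cite[Thm~6.3]{heins.roth.waldmann:2023a}. First I would invoke Theorem~\ref{thm:EntireVsHolomorphic} to obtain the topological algebra isomorphism $\eta^* \colon \Holomorphic(G_\C) \longrightarrow \Entire(G)$. As the completed projective tensor product is functorial in each slot and carries topological isomorphisms to topological isomorphisms (see \cite[§41]{koethe:1979a}), tensoring with the identity of $\Holomorphic_1(\liealg{g}_\C^*)$ produces a topological isomorphism of Fréchet algebras
\begin{equation}
    \Theta
    \coloneqq
    \eta^* \tensor \operatorname{id}
    \colon
    \Holomorphic_\Pol(T^*G)
    \longrightarrow
    \Entire(G)
    \tensor_\pi
    \Holomorphic_1(\liealg{g}_\C^*),
\end{equation}
whose inverse $(\eta^*)^{-1} \tensor \operatorname{id}$ is again continuous. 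The target is precisely the algebra on which \cite[Thm~6.3]{heins.roth.waldmann:2023a} establishes the standard ordered star product together with its continuity and its Fréchet holomorphic dependence on $\hbar$.

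The crucial step is to verify that $\Theta$ intertwines the two standard ordered star products, that is,
\begin{equation}
    \Theta
    \bigl(
        \Phi \star_\std \Psi
    \bigr)
    =
    \Theta(\Phi)
    \star_\std
    \Theta(\Psi)
    \qquad
    \textrm{for all }
    \Phi, \Psi \in \Holomorphic_\Pol(T^*G).
\end{equation}
To this end I would unravel the definition of $\star_\std$: it is assembled from Lie derivatives $\Lie(\xi)$ along $\liealg{g}_\C$ acting on the base factor, contracted order by order against the fibre variables of the second argument, with $\hbar$ entering as a convergent expansion parameter. Since $\Theta$ leaves the fibre factor untouched and since $\eta^*$ intertwines the base Lie derivatives by \eqref{eq:ExtensionLieTaylor}, namely $\Lie(\xi) \eta^* \Phi = \eta^* \bigl( \Lie(\xi) \Phi \bigr)$ for all $\xi \in \Universal^\bullet(\liealg{g}_\C)$, each term of the defining series is matched individually. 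I expect this term-by-term comparison, together with the bookkeeping of how $\eta^*$ commutes past the contraction, to be the main obstacle. It is also the place where one must record that $\star_\std$ preserves holomorphicity in the fibre direction, so that it genuinely restricts to $\Holomorphic_1(\liealg{g}_\C^*)$.

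Granting the intertwining, both assertions follow at once. Well-definedness and continuity of $\star_\std$ on $\Holomorphic_\Pol(T^*G)$ are obtained by conjugating the continuous star product on the target with the topological isomorphisms $\Theta$ and $\Theta^{-1}$, which settles part~\textit{i.)}. For part~\textit{ii.)}, I would observe that $\Theta$ and $\Theta^{-1}$ are continuous linear maps independent of $\hbar$; hence pre- and post-composing the Fréchet holomorphic map $(\hbar, \Phi, \Psi) \mapsto \Phi \star_\std \Psi$ of \cite[Thm~6.3]{heins.roth.waldmann:2023a} with them preserves Fréchet holomorphy and yields the claimed holomorphic dependence on $\hbar$.
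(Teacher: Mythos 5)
Your proposal is correct and follows exactly the paper's route: the paper derives this theorem in one step by identifying the first tensor factor $\Holomorphic(G_\C)$ with $\Entire(G)$ via Theorem~\ref{thm:EntireVsHolomorphic} and transporting \cite[Thm~6.3]{heins.roth.waldmann:2023a} through the resulting isomorphism $\eta^* \tensor \operatorname{id}$. Your additional care with the intertwining of Lie derivatives via \eqref{eq:ExtensionLieTaylor} merely fleshes out details the paper leaves implicit.
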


\begin{remark}[Real Forms]
    Not every complex Lie group arises as a universal complexification. By
    \cite[Ex.~5.1.24]{hilgert.neeb:2012a} there exists a complex Lie algebra $\liealg{h}$
    that may not be realized as $\liealg{g}_\C$ for any real Lie algebra $\liealg{g}$. Let
    $H$ be the unique connected and simply connected complex Lie group integrating
    $\liealg{h}$ and assume, for contradiction, that $G_\C \cong H$ for some Lie group
    $G$. Then $G$ is connected itself by the uniqueness in the universal property
    of~$G_\C$ and fulfils $\eta(G)_\C \cong H$ by Lemma~\ref{lem:ImageOfEta},
    \ref{item:ImageOfEtaComplexification}. But
    \begin{equation}
        \Lie
        \bigl(
        \eta(G)
        \bigr)_\C
        =
        \Lie(H)
    \end{equation}
    by \ref{item:ImageOfEtaDimension} of the same lemma, which is a contradiction.
\end{remark}

That being said, emulating the estimates from \cite{heins.roth.waldmann:2023a}, one
can extend Theorem~\ref{thm:StarProductHolomorphicContinuity} to arbitrary
complex Lie groups, not just the ones arising as universal complexifications. As
entering into the details would detract from the main points of
this paper, we refrain from putting this into practice. Instead, we refer to
\cite[Thm.~3.5.17]{heins:2024a} and conclude the section with
some remarks on further literature for the inclined reader.

The wonderful proceeding \cite{sternheimer:1998a} provides a somewhat dated, but
nevertheless insightful historical overview over the foundations and ideas of formal
deformation quantization. The central ideas behind strict deformation quantization
in the manner pursued above and further examples can be found within the
survey \cite{waldmann:2019a}. Our results may moreover be viewed as putting the
philosophy of holomorphic deformation quantization into practice, as it is discussed
within \cite[Ch.~1]{heins:2024a}.

\section{The Circle and the Special Linear Group}
\label{sec:Examples}%

Having established our main Theorems, we discuss some examples of what they
entail in action. We begin with the simplest situations beyond $\R^d$, which was
already treated without our more elaborate machinery within
\cite[Ex.~4.19]{heins.roth.waldmann:2023a}.
\begin{example}[Circle group]
    Let
    \begin{equation}
        G
        \coloneqq
        \liegroup{U}(1)
        \coloneqq
        \bigl\{
            z \in \C
            \colon
            \abs{z}
            =
            1
        \bigr\},
    \end{equation}
    whose universal complexification is given by the inclusion $\liegroup{U}(1)
    \hookrightarrow \C^\times \coloneqq \C \setminus \{0\}$. It is convenient to
    employ polar coordinates of the form $g = r \E^{\I t}$ with $r > 0$ and $t \in
    [0,2\pi)$, which makes it straightforward to check the universal property.

    If now $\Phi \in \Holomorphic(\C^\times)$, then we may develop $\Phi$ into a
    convergent Laurent expansion
    \begin{equation}
        \label{eq:LaurentExpansion}
        \Phi(z)
        =
        \sum_{n=-\infty}^\infty
        a_n
        z^n
        \qquad
        \textrm{for all }
        z \in \C^\times,
    \end{equation}
    where
    \begin{equation}
        \label{eq:LaurentCoefficients}
        a_n
        =
        \frac{1}{2\pi}
        \int_0^{2\pi}
        \Phi(\E^{\I t})
        \cdot
        \E^{- \I n t}
        \D t
        \qquad
        \textrm{for all }
        n \in \Z = -\N \cup \N_0.
    \end{equation}
    In particular, the Lie-Taylor coefficients of $\phi$ may be read off from
    \begin{equation}
        \label{eq:LaurentExp}
        \Phi
        \bigl(
            \E^{2 \pi \I t}
        \bigr)
        =
        \sum_{n=-\infty}^\infty
        a_n
        \E^{2 \pi \I n t}
    \end{equation}
    as
    \begin{equation}
        \label{eq:LaurentLieTaylor}
        \Lie(\xi)^k \Phi
        \at[\Big]{1}
        =
        \xi^k
        \cdot
        \Lie(1)^k \Phi
        \at[\Big]{1}
        =
        \xi^k
        \cdot
        \frac{\D^k}{\D t^k}
        \phi
        \bigl(
            \E^{2 \pi \I t}
        \bigr)
        \at[\Big]{t=0}
        =
        (2 \pi \I \xi)^k
        \sum_{n=-\infty}^\infty
        n^k a_n
    \end{equation}
    for all $\xi \in \C = \R_\C$. Notably, they are not simply given by the Laurent
    coefficients and as such quite complicated; the same is thus true for the Lie-Taylor
    expansion \eqref{eq:LieTaylor}. Nevertheless, the condition for $\Phi$ being entire
    becomes the finiteness of
    \begin{equation}
        \seminorm{q}_{r}(\Phi)
        =
        \sum_{k=0}^\infty
        \frac{(2 \pi r)^k}{k!}
        \abs[\bigg]
        {
            \sum_{n=-\infty}^{\infty}
            n^k
            a_n
        }
        \le
        \sum_{n=-\infty}^{\infty}
        \abs{a_n}
        \sum_{k=0}^\infty
        \frac{(2 \pi r n)^k}{k!}
        =
        \sum_{n=-\infty}^{\infty}
        \abs{a_n}
        \E^{2\pi r n}
        <
        \infty
    \end{equation}
    by absolute convergence of the series \eqref{eq:LaurentExp} for all $r \ge 0$.
    Hence, $\Phi \in \Entire(\C^\times)$, matching Theorem~\ref{thm:Restriction}. In
    particular, $\Phi \at{U(1)} \in \Entire(\liegroup{U}(1))$ as predicted by
    Theorem~\ref{thm:EntireVsHolomorphic}.

    Conversely, given $\phi \in \Entire(\liegroup{U(1)})$, we observe that the
    composition $\phi \circ \exp$ possesses a holomorphic extension to an entire
    function $F$ on $\C$. Using polar coordinates as before, we may then define
    \begin{equation}
        \Phi(r \E^{2\pi \I t})
        \coloneqq
        F
        \bigl(
            \log r
            \cdot
            \E^{2\pi \I t}
        \bigr).
    \end{equation}
    This idea is at the core of the earlier extension result
    \cite[Theorem~3.5.5]{heins:2024a} for compact connected Lie groups, which our
    Theorem~\ref{thm:Extension} supercedes. Its proof was based on the existence of
    a polar decomposition for any
    compact Lie group.

    Notably, the Laurent coefficients from~\eqref{eq:LaurentCoefficients} are
    well-defined for $\phi$ and thus one may attempt to define an holomorphic
    extension by means of \eqref{eq:LaurentExpansion} directly. It would be
    interesting to make this precise.
\end{example}

The next example showcases the pathologies the non-injectivitity of $\eta$ may
result in.
\begin{example}[Automatic periodicity]
    \label{ex:AutomaticPeriodicity}%
    We consider the special linear group
    \begin{equation}
        G
        \coloneqq
        \liegroup{SL}_2(\R)
        =
        \bigl\{
        M \in \Mat_2(\R)
        \colon
        \det(M) = 1
        \bigr\},
    \end{equation}
    which is connected, but not simply connected. Its universal covering group is
    $\tilde{G} = \widetilde{\liegroup{SL}}_2(\R)$, which is a nonlinear group
    by \cite[Example~9.5.18]{hilgert.neeb:2012a}, i.e. does not possess any faithful
    finite-dimensional representations. We denote the corresponding universal
    covering projection by
    \begin{equation}
        p
        \colon
        \widetilde{\liegroup{SL}}_2(\R)
        \longrightarrow
        \liegroup{SL}_2(\R).
    \end{equation}
    The Lie algebras of both groups are given by the traceless matrices
    \begin{equation}
        \liealg{sl}_2(\R)
        \coloneqq
        \bigl\{
        M
        \in
        \Mat_2(\R)
        \colon
        \tr(M)
        =
        0
        \bigr\},
    \end{equation}
    whose complexification is simply
    \begin{equation}
        \liealg{sl}_2(\R)_\C
        \cong
        \bigl\{
        M
        \in
        \Mat_2(\C)
        \colon
        \tr(M)
        =
        0
        \bigr\}
        \eqqcolon
        \liealg{sl}_2(\C).
    \end{equation}
    Consequently, by the construction within
    Proposition~\ref{prop:UniversalComplexification}, we have
    \begin{equation}
        \widetilde{\liegroup{SL}}_2(\R)_\C
        \cong
        \liegroup{SL}_2(\C)
        \coloneqq
        \bigl\{
            M
            \in
            \Mat_2(\C)
            \colon
            \det(M)
            =
            1
        \bigr\},
    \end{equation}
    as $\liegroup{SL}_2(\C)$ is simply connected by
    \cite[Exercise~15.1.1]{hilgert.neeb:2012a} with Lie algebra $\liealg{sl}_2(\C)$. Next,
    we prove that $\tilde{\eta} = \iota \circ p$, where $\iota \colon \liegroup{SL}_2(\R)
    \hookrightarrow \liegroup{SL}_2(\C)$ simply reinterprets real matrices as complex
    ones. Indeed, by construction its tangent map is given by
    \begin{equation}
        T_\E
        \tilde{\eta}
        =
        T_\E
        \iota
        =
        T_\E
        \iota
        \circ
        T_\E p ,
    \end{equation}
    where $T_\E \iota$ is the inclusion $\liealg{sl}_2(\R) \hookrightarrow
    \liealg{sl}_2(\C)$ and by connectedness the claimed form of~$\tilde{\eta}$ follows.
    Notably, $\tilde{\eta}$ is not injective, but nevertheless locally injective.
    Moreover, we get
    \begin{equation}
        \tilde{\eta}
        \bigl(
            \pi_1(G)
        \bigr)
        =
        \{\E\}.
    \end{equation}
    Consequently,~\eqref{eq:ComplexificationFromUniversalCovering} yields that
    $\liegroup{SL}_2(\R)_\C = \liegroup{SL}_2(\C)$ with $\eta \coloneqq \iota$.

    This example is remarkable in at least two regards: firstly, we see that non
    isomorphic Lie groups may have isomorphic universal complexifications. Secondly,
    \begin{equation}
        \pi_1(G_\C)
        =
        \pi_1
        \bigl(
            \liegroup{SL}_2(\C)
        \bigr)
        =
        \{\E\}
        \subsetneq
        \Z
        =
        \pi_1
        \bigl(
            \liegroup{SL}_2(\R)
        \bigr)
        =
        \pi_1(G).
    \end{equation}
    Let now $\phi \in \Entire(\widetilde{\liegroup{SL}}_2(\R))$. By
    Theorem~\ref{thm:Extension}, there exists a unique holomorphic extension
    \begin{equation}
        \Phi
        \in
        \Holomorphic
        \bigl(
        \liegroup{SL}_2(\C)
        \bigr)
    \end{equation}
    such that $\Phi \circ p = \phi$. This means that $\Phi$ factors through $p$ and is
    thus necessarily periodic with respect to the fundamental group
    $\pi_1(\liegroup{SL}_2(\R))$. Consequently, the algebra
    $\Entire(\widetilde{\liegroup{SL}}_2(\R))$ does not separate points.
\end{example}

\bibliographystyle{nchairx}
\phantomsection
\addcontentsline{toc}{section}{Bibliography}
\bibliography{Auxiliary/Entire}

\end{document}